\numberwithin{equation}{section}
\def\@tocline#1#2#3#4#5#6#7{\relax
\ifnum #1>\c@tocdepth 
\else
\par \addpenalty\@secpenalty\addvspace{#2}%
\begingroup \hyphenpenalty\@M
\@ifempty{#4}{%
\@tempdima\csname r@tocindent\number#1\endcsname\relax
}{%
\@tempdima#4\relax
}%
\parindent\z@ \leftskip#3\relax \advance\leftskip\@tempdima\relax
\rightskip\@pnumwidth plus4em \parfillskip-\@pnumwidth
#5\leavevmode\hskip-\@tempdima
\ifcase #1
\or\or \hskip 1em \or \hskip 2em \else \hskip 3em \fi%
#6\nobreak\relax
\dotfill\hbox to\@pnumwidth{\@tocpagenum{#7}}\par
\nobreak
\endgroup
\fi}
\newcommand{\xdashrightarrow}[2][]{\ext@arrow 0359\rightarrowfill@@{#1}{#2}}
\def\rightarrowfill@@{\arrowfill@@\relax\relbar\rightarrow}
\def\arrowfill@@#1#2#3#4{%
$\m@th\thickmuskip0mu\medmuskip\thickmuskip\thinmuskip\thickmuskip
\relax#4#1
\xleaders\hbox{$#4#2$}\hfill
#3$%
}
\newcommand{\R}{\mathbb{R}}
\newcommand{\N}{\mathbb{N}}
\newcommand{\bfc}{\mathbf{c}}
\newcommand{\Om}{\Omega}
\newcommand{\ove}{\overline}
\newcommand{\rmd}{{\rm d}}
\mathchardef\emptyset="001F
\newcommand*{\bigchi}{\mbox{\Large$\chi$}}
\definecolor{vgreen}{rgb}{0.1,0.5,0.2}
\definecolor{viola}{RGB}{85,26,139}
\definecolor{citegreen}{rgb}{0,0.6,0}
\definecolor{refred}{rgb}{0.8,0,0}
\newtheorem{thm}{Theorem}[section]
\newtheorem{lemma}[thm]{Lemma}
\newenvironment{remark}
  {\pushQED{\qed}\remarkr}
  {\popQED\endremarkr}
\theoremstyle{definition}
\newtheorem{example}[thm]{Example}
\renewcommand*\env@matrix[1][*\c@MaxMatrixCols c]{%
  \hskip -\arraycolsep
  \let\@ifnextchar\new@ifnextchar
  \array{#1}}
\begin{document}

\title{Mass optimization problems with convex cost}

\author[G.~Buttazzo]{Giuseppe Buttazzo}
\author[M.S.~Gelli]{Maria Stella Gelli}
\author[D.~Lu\v ci\' c]{Danka Lu\v ci\' c}

\begin{abstract} 
In this paper we consider a mass optimization problem in the case of scalar state functions, where instead of imposing a constraint on the total mass of the competitors, we penalize 
the classical compliance by a convex functional defined on the space of measures. We obtain a characterization of optimal solutions to the problem through a suitable PDE. This generalizes the case considered in the literature of a linear cost and applies to the optimization of a conductor where very low and very high conductivities have both a high cost, and then the study of nonlinear models becomes relevant.
\end{abstract}

\makeatletter

\date{\today} 

\keywords{Mass optimization problem, Convex functionals on measures, Sobolev spaces with respect to measures, Fenchel duality} 
\subjclass[2020]{49J45, 49K20, 49J20, 28A50, 28A33, 46A20, 46E36}

\maketitle


\section[Introduction]{Introduction}\label{sintro}
An optimization problem that plays a central role in many questions in Applied Mathematics is the so-called 
\emph{mass optimization problem}. A version of such a problem, in the \emph{scalar case}, provides a 
mathematical framework for the study of stationary heat conduction models, for instance in finding optimal 
mixtures of two conductors (see for example \cite{MT}).
It has been studied in the celebrated paper \cite{BB2001} and reads as follows: let \(\Omega\subset\R^n\) be 
a bounded Lipschitz domain, whose closure represents a given design region, and let a signed measure \(f\in 
\mathscr M(\overline \Omega)\) with finite total variation represent a given heat source density.

The energy associated to some distribution \(\mu\in\mathscr M^+(\overline\Omega)\) of conducting material is 
given by:
\begin{equation}\label{eq:compliance}
\mathcal E_f(\mu)\coloneqq \inf 
\left\{\int \frac{|\nabla u|^2}{2}\,\rmd \mu-\langle f,u\rangle:\, u\in \mathscr D(\Omega)\right\},
\end{equation}
where $\mathscr D(\Omega)$ is the class of smooth functions compactly supported in $\Omega$. The optimization
problem one wants to consider is that of finding a distribution \(\mu\), of a given total amount \(m>0\) of
material, which provides the minimal \emph{compliance} \(\mathcal J_f(\mu)\), defined as  \(\mathcal 
J_f(\mu)\coloneqq-\mathcal E_f(\mu)\). Namely:
\begin{equation}\tag{\({\rm MOP}_f\)}\label{eq:MOP}
\min\big\{ \mathcal J_f(\mu):\,\mu\in \mathscr M^+(\overline \Omega),\ \mu(\overline \Omega)= m\big\}.
\end{equation}

The term ``scalar case'' associated to this problem is related to the fact that the competitors 
\(u\in\mathscr D(\Omega)\) in the minimization problem \eqref{eq:compliance} take values in \(\R\), and 
represent the temperature profiles. 
In \cite{BB2001}, the characterization of the optimal masses for \eqref{eq:MOP} 
is shown to be related to the Monge-Kantorovich PDE:
\begin{align}
-\textrm{div}(\mu\nabla_\mu u)= f,\quad|\nabla_\mu u|=1\ \mu\text{-a.e.,}\quad 
u \in \textrm{LIP}_{0,1}(\Omega).\tag{PDE}\label{eq:PDE}
\end{align}

In the above equation the notation \(\nabla_\mu u\) stands for the \(\mu\)-tangential gradient of \(u\), 
introduced in \cite{BBS}; we recall its precise definition in Subsection \ref{sec:Sobolev}, while the notion 
of \(\mu\)-divergence \({\rm div}(\mu\nabla_\mu u)\) is recalled in Section \ref{sec:Existence} (see \eqref{eq:mu_divergence}); for the definition of 
the space \({\rm LIP}_{0,1}(\Omega)\) see \eqref{eq:LIP}.  
Let us mention at this point that, as it is evident from the above PDE, the study and the characterization 
of the solutions to the problem \eqref{eq:MOP} rely on the notion of Sobolev functions with respect to an 
arbitrary measure \(\mu\) on \(\R^n\). Such a notion has been introduced for the first time in \cite{BBS}, 
and has been followed up to now by the new results concerning Sobolev and BV theory in this framework 
\cite{BF_Second_order,LPR20,BBF,GL,Zhikov}, as well as the applications in mass optimization problems (see 
\cite{VarMethods}), in homogenization theory (see \cite{Mandallena}), or in optimal transport (see 
\cite{Louet}), just to name a few.

For the forthcoming discussion, let us set
\begin{equation}\tag{\({\rm AP}_{f}\)}\label{eq:aux_pb_1}
\mathrm{I}_f\coloneqq\inf\big\{-\langle f,u\rangle:\, u\in \mathrm{LIP}_{0,1}(\Omega)\big\}.
\end{equation}
\noindent  It is proven in  \cite{BB2001} that  \eqref{eq:MOP} 
admits a solution whenever \(f\in \mathscr M(\overline\Omega)\)
and the following fact holds:
\begin{equation}\label{eq:OC}
\mu \text{ solves } \eqref{eq:MOP} \text{ and } u\text{ solves } \eqref{eq:aux_pb_1}
\quad
\text{ iff }\quad (\mu, u) \text{ satisfy } \eqref{eq:PDE}.
\end{equation}
The theory has been then extended to more general settings, some of which are listed below. 
\begin{itemize}
\item[-] Problem \eqref{eq:MOP} has been considered also in the framework of Riemannian manifolds (see 
\cite{Pratelli2005}).
 
\item[-] The optimizer \(\mu\) has been characterized via \eqref{eq:PDE} also in the \emph{vectorial case} 
(the term ``vectorial'' indicating that the competitors \(u\) in \eqref{eq:compliance} take values in 
\(\R^n\)), where also the Dirichlet regions (namely, the closed subsets of \(\overline\Omega\) on which the 
Dirichlet boundary conditions may be imposed) have been taken into consideration (see \cite{BB2001}). 

\item[-] Mass optimization problems in the vectorial case have been used in structural mechanics in order to 
find a distribution of a given amount of an elastic material which, for a given system of loads, gives the 
best resistance in terms of minimal compliance (see for instance \cite{AK}).
 
\item[-] In \cite{BF2007} a variant of mass optimization problem, involving an arbitrary linear operator \(A\) 
defined on the space of smooth functions \(\mathscr D(\Omega)\) instead of the gradient operator, has been 
considered, together with the applications to the elasticity theory of thin plates.
 
\item[-] In \cite{BBol2021}, instead of looking for the optimal mass distributions \(\mu\in \mathscr 
M(\overline\Omega)\), the problem of looking for an optimal conductivity tensor \(\sigma\in \mathscr 
M(\overline\Omega,\R^{n\times n}_{\mathrm{sym}})\) has been addressed.

\item[-] Integrands more general than \(|\cdot|^2\) have been considered in the definition of 
\(\mathcal E_f\), satisfying a suitable \(p\)-growth conditions with \(p>1\) (see \cite{BB2001}).

\item[-] Without any intent of being complete, we mention also some other recent contributions to the topic, 
in which different variants, motivated by some precise applications, have been considered (see \cite{BL}, 
\cite{LRZ}).
\end{itemize}

In this paper we provide a generalization of problem \eqref{eq:MOP} in the scalar case, in a \emph{new} and 
\emph{different} direction with respect to the above mentioned papers in the literature. Namely, instead of 
imposing a constraint on the total mass for the competitors in \eqref{eq:MOP}, we look at the compliance functional 
\(-\mathcal E_f\) penalized by a convex functional \(\mathcal C\) defined on the space of measures. This 
corresponds to rephrase the constraint in terms of Lagrangian multipliers (for a related discussion see 
\cite{Zalinescu}). More precisely, let \(\mathcal C\colon \mathscr M(\overline\Omega)\to[0,+\infty]\) be a 
\emph{cost functional} defined as
\[
\mathcal C(\mu)\coloneqq \int_{\Omega}c(a(x))\,\rmd x+c^{\infty}(1)\,\mu^s(\overline\Omega), 
\quad \text{ for every }\mu\in \mathscr M^+(\overline\Omega),
\]
where $\mu=a\,\rmd x+\mu^s$ is the decomposition of $\mu$ into absolute continuous part and singular part with respect to the Lebesgue measure and $c^{\infty}$ is the \emph{recession function} associated to  $c$ (cf.\ Subsection 
\ref{sec:FunctionalsOnMeasures}). The function \(c\colon \R\to [0,+\infty]\) above is referred to as a (homogeneous) \emph{cost function}; it is assumed to be proper, convex, lower semicontinuous, and to satisfy the following properties:
\begin{equation}\label{propc}
c(t)\ge\alpha t+\beta\,\text{ for all }t>0,\qquad c(t)=+\infty\text{ for all }t<0,
\end{equation}
with $\alpha>0$ and $\beta\in\R$. In order to rule out the trivial case when $c$ is finite only at the origin, we also require
\begin{equation}\label{tzero}
\text{there exists $t_0>0$ such that }c(t_0)<+\infty.
\end{equation}
We thus consider the following minimization problem, to which we refer to as to a \emph{mass optimization 
problem with convex cost}:
\begin{equation}\tag{\({\rm MOP}_{f,c}\)}\label{eq:MOPc}
\min\big\{-\mathcal E_f(\mu)+\mathcal C(\mu):\,\mu\in \mathscr M^+(\overline \Omega)\big\}.
\end{equation}
Roughly speaking, by adding the penalization term given by the functional \(\mathcal C\) we are taking into 
account the cost to select conducting properties of the material and not only its total amount.

For the sake of generality all the results contained in the paper are obtained in the case of \emph{heterogeneous cost functions} \(c=c(x,t)\), which may depend also on the spatial variable \(x\in\Om\) (see Subsection \ref{sec:MOPc} for a deeper insight on results and relative hypotheses). However, in this introductory chapter we prefer to present the results in the {\it homogeneous} case $c=c(t)$, that is of particular interest and allows a simpler presentation.

Under the above hypotheses on the cost function \(c\), the functional \(\mathcal C\) turns out to be convex and lower semicontinuous with respect to the \(w^*\)-topology on \(\mathscr M(\overline\Omega)\). Thus, whenever the distribution \(f\) is such that the domain of the 
functional \(\mu\mapsto -\mathcal E_f(\mu)+\mathcal C(\mu)\) is non-empty, we obtain, by means of Direct Methods in the Calculus of Variations, the existence of an optimal measure $\mu_{\rm opt}$ for \eqref{eq:MOPc} (see Subsection \ref{sec:Existence}, Theorem \ref{thm:existence_mu_opt}).

Our goal is the characterization of the optimal measure $\mu_{\rm opt}$ in terms of some suitable {\it auxiliary variational problem}. Therefore, we introduce the problem
\begin{equation}\tag{\({\rm AP}_{f,c}\)}\label{eq:aux_pb_2}
{\rm I}_{f,c} \coloneqq \inf_{u\in H^{1}_0(\Omega)} \int_{\Omega} 
c^*\left(\frac{|\nabla u|^2}{2}\right)\,\rmd x 
-\langle f,u\rangle.
\end{equation}
The notation \(c^*\) above stands for the conjugate function of the cost function \(c\) (cf.\ Subsection 
\ref{sec:Duality}). 
In particular, in the case \(c(t)=t/2\) for \(t\ge0\) and \(+\infty\) 
otherwise, we have that the quantity ${\rm I}_{f,c}$ above coincides with ${\rm I}_f$ and the conditions 
obtained in Theorem \ref{casoL} below coincide with the Monge-Kantorovich PDE in 
\cite{BB2001}.

For the sake of clarity we state here our main results, in the particular case of homogeneous cost functions $c(t)$, distinguishing 
two cases: the first one, called \emph{superlinear case} or briefly {\rm (SL)}, is concerned with cost functions \(c\) such that 
\(c^\infty(1)=+\infty\), while the second one, called \emph{linear case} or briefly {\rm (L)}, is concerned with cost functions \(c\) such that \(c^\infty(1)<+\infty\) (i.e. that have a linear growth at infinity). Note that, in the superlinear case, the competing measures $\mu$ are necessarily absolutely continuous with respect to the Lebesgue measure  (otherwise $\mathcal C(\mu)=+\infty$), therefore in the sequel we often refer to them by considering simply the density of their absolutely continuous part $a(x)\in L^1(\Om)$.

\begin{thm}[Existence and optimality conditions in the superlinear case]\label{casoSL}
Assume that  \(\Omega\) is a bounded Lipschitz domain and that \(c\) is a homogeneous cost function with $c^\infty(1)=+\infty$. Then for every \(f\in H^{-1}(\Om)\) any solution $\mu_{\rm opt}$ of problem \eqref{eq:MOPc} is absolutely continuous with respect to the Lebesgue measure. In addition, the auxiliary variational problem \eqref{eq:aux_pb_2} admits a solution $\bar u$ and there exists an optimal measure $\mu_{\rm opt}=a_{\rm opt}\,\rmd x$ such that the couple $(a_{\rm opt},\bar u)\in L^1(\Omega)\times H^1_0(\Omega)$ is identified by the differential conditions:
\begin{equation}\label{eq:OC_SL_intro}
\begin{cases}
-{\rm div}(a_{\rm opt}\nabla\bar u)=f\,\text{ in }\mathscr D'(\Omega);\\
a_{\rm opt}(x)\in \partial c^*\left(\frac{|\nabla \bar u(x)|^2}{2}\right)\,\text{ for }\mathcal L^n\text{-a.e.\ }x\in\Omega.
\end{cases}
\end{equation}
\end{thm}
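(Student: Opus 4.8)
The plan is to view \eqref{eq:aux_pb_2} as a dual problem to \eqref{eq:MOPc}, with $\min\eqref{eq:MOPc}=-{\rm I}_{f,c}$, and to read \eqref{eq:OC_SL_intro} as the extremality relations between a primal optimum $a_{\rm opt}\,\rmd x$ and a dual optimum $\bar u$. Two preliminary remarks are in order. First, \eqref{eq:MOPc} has finite value, because by \eqref{tzero} the constant density $t_0\,\rmd x$ is admissible with $\mathcal C(t_0\,\rmd x)=c(t_0)|\Omega|<+\infty$, while $f\in H^{-1}(\Omega)$ gives $-\mathcal E_f(t_0\,\rmd x)=\sup_{u\in\mathscr D(\Omega)}\big(\langle f,u\rangle-\tfrac{t_0}{2}\|\nabla u\|_{L^2}^2\big)\le\tfrac{1}{2t_0}\|f\|_{H^{-1}}^2<+\infty$. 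Second, in the superlinear case any minimizer $\mu$ of \eqref{eq:MOPc} must be absolutely continuous, since $\mu^s\ne0$ forces $\mathcal C(\mu)=+\infty$ as $c^\infty(1)=+\infty$.

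Next we establish existence for \eqref{eq:aux_pb_2} by the Direct Method in $H^1_0(\Omega)$. Write $\Phi(u):=\int_\Omega c^*\!\big(\tfrac{|\nabla u|^2}{2}\big)\,\rmd x$. Since $c^*$ is convex and nondecreasing (the latter because $c=+\infty$ on $(-\infty,0)$), the integrand $v\mapsto c^*(|v|^2/2)$ is convex, so $\Phi$ is convex, and being strongly lower semicontinuous on $H^1_0(\Omega)$ by Fatou's lemma it is also weakly lower semicontinuous; it is proper, as $\Phi(0)=c^*(0)|\Omega|<+\infty$. Coercivity again follows from \eqref{tzero}: the Fenchel inequality yields $c^*(s)\ge t_0\,s-c(t_0)$, so $\Phi(u)-\langle f,u\rangle\ge\tfrac{t_0}{2}\|\nabla u\|_{L^2}^2-c(t_0)|\Omega|-\|f\|_{H^{-1}}\|\nabla u\|_{L^2}$, which is coercive on $H^1_0(\Omega)$. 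Hence \eqref{eq:aux_pb_2} admits a minimizer $\bar u$, necessarily with $\Phi(\bar u)<+\infty$.

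With $\bar u$ in hand, the optimality conditions come from first-order analysis together with the duality identity. Since $\bar u$ minimizes the convex functional $\Phi-\langle f,\cdot\rangle$, one has $f\in\partial\Phi(\bar u)$; by the standard characterization of the subdifferential of a convex integral functional, together with the convex chain rule $\partial\big(c^*(|\cdot|^2/2)\big)(v)=\{s\,v:\ s\in\partial c^*(|v|^2/2)\}$, there is a measurable selection $a_{\rm opt}(x)\in\partial c^*\!\big(\tfrac{|\nabla\bar u(x)|^2}{2}\big)$ (for instance the minimal one, $(c^*)'_-\!\big(\tfrac{|\nabla\bar u|^2}{2}\big)$, which is Borel and nonnegative) with $a_{\rm opt}\nabla\bar u\in L^1(\Omega;\R^n)$ and $-{\rm div}(a_{\rm opt}\nabla\bar u)=f$ in $\mathscr D'(\Omega)$. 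The selection property is equivalent to the equality case in Young's inequality, $c(a_{\rm opt})+c^*\!\big(\tfrac{|\nabla\bar u|^2}{2}\big)=a_{\rm opt}\,\tfrac{|\nabla\bar u|^2}{2}$ a.e.\ in $\Omega$; integrating and using \eqref{propc} and $\Phi(\bar u)<+\infty$ gives $a_{\rm opt}\in L^1(\Omega)$, $a_{\rm opt}|\nabla\bar u|^2\in L^1(\Omega)$, and $\mathcal C(a_{\rm opt}\,\rmd x)=\int_\Omega c(a_{\rm opt})\,\rmd x<+\infty$. Now, using the equation and the characterization of $\mathcal E_f$ from the theory of Sobolev spaces with respect to a measure recalled in Subsection~\ref{sec:Sobolev} — which identifies $-\mathcal E_f(a_{\rm opt}\,\rmd x)$ with the value $\langle f,\bar u\rangle-\tfrac12\int_\Omega a_{\rm opt}|\nabla\bar u|^2\,\rmd x$ at the $a_{\rm opt}\,\rmd x$-harmonic function $\bar u$ — together with the equality case of Young's inequality, we obtain $-\mathcal E_f(a_{\rm opt}\,\rmd x)+\mathcal C(a_{\rm opt}\,\rmd x)=\langle f,\bar u\rangle-\Phi(\bar u)=-{\rm I}_{f,c}$. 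On the other hand, the complementary-energy (dual) representation $-\mathcal E_f(a\,\rmd x)=\inf\big\{\tfrac12\int_\Omega\tfrac{|\tau|^2}{a}\,\rmd x:\ -{\rm div}\,\tau=f\big\}$ (see \cite{BB2001}) together with the pointwise minimization in $a$ of the integrand $\tfrac{|\tau(x)|^2}{2a}+c(a)$ exhibits \eqref{eq:MOPc}, up to sign, as the Fenchel dual of \eqref{eq:aux_pb_2}, so weak duality gives $-{\rm I}_{f,c}\le\min\eqref{eq:MOPc}$. Combining the two bounds, $\min\eqref{eq:MOPc}=-{\rm I}_{f,c}$, hence $a_{\rm opt}\,\rmd x$ is optimal for \eqref{eq:MOPc} and \eqref{eq:OC_SL_intro} holds.

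The step we expect to be the main obstacle is the rigorous treatment of the interaction between $\Phi$ (equivalently $\mathcal E_f$) and the weighted Sobolev structure when $c^*$ fails a $\Delta_2$-type growth condition: both the identification of $\partial\Phi(\bar u)$ leading to the Euler--Lagrange equation and the evaluation of $-\mathcal E_f(a_{\rm opt}\,\rmd x)$ require approximating $\bar u$ by functions in $\mathscr D(\Omega)$ in the $L^2(a_{\rm opt}\,\rmd x)$-weighted sense, which need not be available directly because of a Lavrentiev-type gap. This is precisely where the $\mu$-Sobolev theory of \cite{BBS} enters — through the identification $\nabla_{a_{\rm opt}\,\rmd x}\bar u=\nabla\bar u$ and the density of $\mathscr D(\Omega)$ in $H^1_{a_{\rm opt}\,\rmd x}$ — and, if necessary, through an auxiliary regularization of $c$ by strictly convex cost functions $c_\varepsilon$ satisfying $\Delta_2$, with a passage to the limit $\varepsilon\to0$ in which the nonlinear term $a_\varepsilon\nabla\bar u_\varepsilon$ is handled by a div--curl / monotonicity argument. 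Once these points are settled, the chain of (in)equalities closes and also delivers the strong duality $\min\eqref{eq:MOPc}=-{\rm I}_{f,c}$.
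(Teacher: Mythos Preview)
Your overall strategy---construct a candidate density $a_{\rm opt}$ from a minimizer $\bar u$ of the auxiliary problem and verify optimality by closing a chain of duality inequalities---is natural, and the existence of $\bar u$ and the weak-duality inequality $\min\eqref{eq:MOPc}\ge -{\rm I}_{f,c}$ are correctly obtained. You also correctly isolate the real obstruction: evaluating $-\mathcal E_f(a_{\rm opt}\,\rmd x)$ at the non-smooth $\bar u$. From the equation $-{\rm div}(a_{\rm opt}\nabla\bar u)=f$ you only get $-\mathcal E_f(a_{\rm opt}\,\rmd x)\le\tfrac12\int a_{\rm opt}|\nabla\bar u|^2$, and turning this into an equality requires $\langle f,\bar u\rangle=\int a_{\rm opt}|\nabla\bar u|^2$, i.e.\ passing the pairing $\int a_{\rm opt}\nabla\bar u\cdot\nabla u_i$ to the limit with $u_i\to\bar u$ in $H^1_0$. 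For that you would need $a_{\rm opt}\nabla\bar u\in L^2(\Omega)$, which does \emph{not} follow from $a_{\rm opt}\in L^1$ and $a_{\rm opt}|\nabla\bar u|^2\in L^1$ alone; and your proposed fix through $\mu$-Sobolev spaces is circular, since the identity $\nabla_{a_{\rm opt}\rmd x}\bar u=\nabla\bar u$ is exactly the approximation statement in question (in general one only has $|\nabla_\mu u|\le\mathrm{lip}(u)$). The regularization route you sketch could in principle be made to work, but it is not carried out.

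The paper circumvents this entirely by a different mechanism. Rather than building $a_{\rm opt}$ from $\bar u$ and then checking optimality, it first proves the \emph{no-gap} equality
\[
\inf_{u\in\mathscr D(\Omega)}\mathcal F_f(\mu_{\rm opt},u)-\mathcal C(\mu_{\rm opt})={\rm I}_{f,c}
\]
for any minimizer $\mu_{\rm opt}$ (Theorem~\ref{thm:aux_related_to_optimal_mu}). The key tools are: (i) a min--max interchange (Theorem~\ref{thm:min_max_Sorin}) applied on the $w^*$-compact sets $\mathscr K^+_k(\overline\Omega)$, turning $\sup_\mu\inf_u$ into $\inf_u\sup_\mu$; and (ii) a $\Gamma$-convergence argument for the truncated conjugates $c_k^*\nearrow c^*$ to remove the mass bound as $k\to\infty$. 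This yields strong duality without ever needing to approximate $\bar u$ in a weighted space. Only afterwards (Theorem~\ref{thm:opt_cond_hom}) are the pointwise conditions \eqref{eq:OC_SL_intro} derived, and at that stage the paper can exploit the already-established equality together with a recovery sequence for $\bar u$ to get the required weighted convergence. In short: your direct construction leaves the Lavrentiev-type gap open, whereas the paper's min--max/$\Gamma$-limit argument is precisely the device that closes it.
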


In other words, in the superlinear case, in order to recover an optimal density $a_{\rm opt}$, we first solve the auxiliary variational problem \eqref{eq:aux_pb_2} obtaining the corresponding solution $\bar u$; then it is enough to take $a_{\rm opt}(x)\in \partial c^*\left(\frac{|\nabla \bar u(x)|^2}{2}\right)$. Note that \(\nabla \bar u\) solves the dual problem associated to \(\mathcal E_{f}(a_{\rm opt})\) via the standard duality arguments  (cf.\ \eqref{eq:dual_problem_SL}). In the superlinear case, Theorem \ref{casoSL} holds even for more general $f$, depending on the growth properties of the cost $c$; in Example \ref{ex:c_quadratic} we show a case in which this occurs. See also Remark \ref{rmk:admissible_f_SL} for a related discussion.

In the linear case instead the optimal measure $\mu_{\rm opt}$ may have singular parts and the corresponding optimality conditions have to take this fact into account.

\begin{thm}[Existence and optimality conditions in the linear case]\label{casoL}
Assume that \(\Omega\) is a bounded Lipschitz domain and that \(c\) is a homogeneous cost function with $c^\infty(1)<+\infty$. Then for every \(f\in \mathscr M(\overline\Omega)\) the auxiliary variational problem \eqref{eq:aux_pb_2} admits a solution $\bar u$ in the space
$${\rm LIP}_{0,\bfc}(\Om)=\big\{u\in {\rm LIP}(\R^n):\,u=0\hbox{ on }\partial\Om,\ \|\nabla u\|_\infty\le \mathbf{c}\big\}\quad \text{ with }\quad \mathbf{c}=\sqrt{2c^\infty(1)}.$$
In addition, the couple $(\mu_{\rm opt},\bar u)\in\mathscr M^+(\overline\Omega)\times{\rm LIP}_{0,\bfc}(\Omega)$ is identified by  the following differential conditions:
\begin{equation}\label{eq:OCLintro}
\begin{split}
1)\quad &-{\rm div}(\mu_{\rm opt}\nabla_{\mu_{\rm opt}}\bar u)
=f\,\text{ in } \mathscr D'(\Omega);\\
2)\quad &\frac{|\nabla_{\mu_{\rm opt}}\bar u(x)|^2}{2}a_{\rm opt}(x)
=c^*\left(\frac{|\nabla\bar u(x)|^2}{2}\right)+
c(a_{\rm opt}(x))\quad\text{holds } 
a_{\rm opt}\mathcal L^n\text{-a.e.\ } x\in \Omega;\\
3) \quad &\frac{|\nabla_{\mu_{\rm opt}} \bar u(x)|^2}{2}= 
c^\infty(1)\,\text{ holds for }\mu^s_{\rm opt}\text{-a.e.\ }x\in \Omega;\\
4)\quad & \mu_{\rm opt}(\partial \Omega)=0.
\end{split}
\end{equation}
\end{thm}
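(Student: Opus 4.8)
The plan is to derive Theorem~\ref{casoL} from the convex duality between \eqref{eq:MOPc} and \eqref{eq:aux_pb_2}, whose common value is $-\mathrm I_{f,c}$, with the differential conditions $1)$--$3)$ appearing as the complementary-slackness relations of that duality and $4)$ following from a separate boundary estimate. The first ingredient is weak duality: for every $\mu\in\mathscr M^+(\overline\Omega)$ and every $u\in{\rm LIP}_{0,\bfc}(\Omega)$ one has $-\mathcal E_f(\mu)\ge\langle f,u\rangle-\int\frac{|\nabla_\mu u|^2}{2}\,\rmd\mu$, since $u$ is an admissible competitor in the relaxation of \eqref{eq:compliance} defining $\mathcal E_f(\mu)$; and, writing $\mu=a\,\rmd x+\mu^s$, the Fenchel--Young inequality applied with $\frac{|\nabla u|^2}{2}$ on the absolutely continuous part, together with $\|\nabla u\|_\infty\le\bfc$ (so that $\frac{|\nabla_\mu u|^2}{2}\le c^\infty(1)$ $\mu$-a.e.) on the singular part, gives $\mathcal C(\mu)\ge\int\frac{|\nabla_\mu u|^2}{2}\,\rmd\mu-\int_\Omega c^*\big(\frac{|\nabla u|^2}{2}\big)\,\rmd x$. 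Summing and optimizing in $\mu$ and $u$ yields $\min\eqref{eq:MOPc}\ge-\mathrm I_{f,c}$. For the existence of a solution $\bar u$ of \eqref{eq:aux_pb_2} in ${\rm LIP}_{0,\bfc}(\Omega)$: since ${\rm dom}(c^*)\subseteq(-\infty,c^\infty(1)]$, any $u\in H^1_0(\Omega)$ of finite energy there satisfies $|\nabla u|\le\bfc$ a.e.\ and, $\Omega$ being a bounded Lipschitz domain, has a representative in ${\rm LIP}_{0,\bfc}(\Omega)$; this set is convex and compact in $C(\overline\Omega)$ by Ascoli--Arzel\`a, and the functional of \eqref{eq:aux_pb_2} is convex and lower semicontinuous for uniform convergence (the integrand $v\mapsto c^*(\frac{|v|^2}{2})$ is convex because $c^*$ is convex and nondecreasing, one uses the classical lower semicontinuity of convex integrals of the gradient under weak-$*$ convergence of $\nabla u$ in $L^\infty$, and $u\mapsto\langle f,u\rangle$ is continuous for $f\in\mathscr M(\overline\Omega)$), so the Direct Method provides $\bar u$.

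The core of the proof is the reverse inequality $\min\eqref{eq:MOPc}\le-\mathrm I_{f,c}$. I would obtain it by a minimax argument for
\[
\Psi(\mu,u)=\mathcal C(\mu)+\langle f,u\rangle-\int\tfrac{|\nabla u|^2}{2}\,\rmd\mu,\qquad \mu\in\mathscr M^+(\overline\Omega),\ u\in\mathscr D(\Omega),
\]
which is convex and weak-$*$ lower semicontinuous in $\mu$ (by the weak-$*$ lower semicontinuity of $\mathcal C$ and the weak-$*$ continuity of $\mu\mapsto\int\frac{|\nabla u|^2}{2}\,\rmd\mu$) and concave and continuous in $u$; the coercivity $\mathcal C(\mu)\ge\alpha\,\mu(\overline\Omega)+\beta|\Omega|$ makes the sublevel sets of $\mu\mapsto\Psi(\mu,0)$ weak-$*$ compact, and this is what legitimizes the exchange of $\inf_\mu$ and $\sup_u$. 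Since $\sup_{u\in\mathscr D(\Omega)}\Psi(\mu,u)=\mathcal C(\mu)-\mathcal E_f(\mu)$, while $\inf_{\mu}\big[\mathcal C(\mu)-\int\frac{|\nabla u|^2}{2}\,\rmd\mu\big]$ equals $-\int_\Omega c^*(\frac{|\nabla u|^2}{2})\,\rmd x$ when $|\nabla u|\le\bfc$ (it suffices to test with $a\,\rmd x$ for a suitable $L^1$ selection $a(x)\in\partial c^*(\tfrac{|\nabla u(x)|^2}{2})$) and $-\infty$ otherwise, the interchange produces $\min\eqref{eq:MOPc}=\sup\{\langle f,u\rangle-\int_\Omega c^*(\frac{|\nabla u|^2}{2})\,\rmd x:\,u\in\mathscr D(\Omega),\ |\nabla u|\le\bfc\}$; a standard inner approximation of ${\rm LIP}_{0,\bfc}(\Omega)$ by such smooth functions (mollification followed by a cut-off near $\partial\Omega$, available on a Lipschitz domain), together with the lower semicontinuity recorded above, identifies the right-hand side with $-\mathrm I_{f,c}$. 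This is the step I expect to be the main obstacle: the non-compactness of the measure variable is handled by the coercivity, but the smooth inner approximation, which must preserve the constraint $|\nabla u|\le\bfc$, requires care, as does the passage between the classical gradient appearing in $c^*(\frac{|\nabla u|^2}{2})$ and the $\mu$-tangential gradient.

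Finally I would read off the optimality conditions. With $\mu_{\rm opt}$ given by Theorem~\ref{thm:existence_mu_opt} and $\bar u$ as above, the identity $-\mathcal E_f(\mu_{\rm opt})+\mathcal C(\mu_{\rm opt})=\langle f,\bar u\rangle-\int_\Omega c^*(\frac{|\nabla\bar u|^2}{2})\,\rmd x$ forces equality in both inequalities of the weak-duality step taken at $(\mu_{\rm opt},\bar u)$. Equality in the first one means that $\bar u$ maximizes $v\mapsto\langle f,v\rangle-\int\frac{|\nabla_{\mu_{\rm opt}}v|^2}{2}\,\rmd\mu_{\rm opt}$, and writing the Euler--Lagrange equation of this concave functional gives $1)$. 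Equality in the second one rearranges to
\[
\int_\Omega\Big[c(a_{\rm opt})+c^*\big(\tfrac{|\nabla\bar u|^2}{2}\big)-\tfrac{|\nabla_{\mu_{\rm opt}}\bar u|^2}{2}\,a_{\rm opt}\Big]\rmd x+\int\Big[c^\infty(1)-\tfrac{|\nabla_{\mu_{\rm opt}}\bar u|^2}{2}\Big]\rmd\mu^s_{\rm opt}=0,
\]
where both integrands are nonnegative --- the first by Fenchel--Young together with $|\nabla_{\mu_{\rm opt}}\bar u|\le|\nabla\bar u|$ (an equality where $a_{\rm opt}>0$, the $\mu_{\rm opt}$-tangent space being all of $\R^n$ there), the second because $|\nabla_{\mu^s_{\rm opt}}\bar u|\le\|\nabla\bar u\|_\infty\le\bfc$ --- so each vanishes almost everywhere, which is precisely $2)$ and $3)$. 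Condition $4)$ is obtained separately: were $\mu_{\rm opt}(\partial\Omega)>0$, the measure obtained by restricting $\mu_{\rm opt}$ to $\Omega$ would still be admissible and would have the same compliance, as the competitors in \eqref{eq:compliance} vanish near $\partial\Omega$, but strictly smaller cost, since $c^\infty(1)\ge\alpha>0$, contradicting optimality. Running the weak-duality chain backwards then shows the converse, namely that an admissible couple fulfilling $1)$--$4)$ realizes the value $-\mathrm I_{f,c}$ and is therefore optimal.
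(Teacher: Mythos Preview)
Your proposal follows essentially the same route as the paper: establish $\min\eqref{eq:MOPc}=-\mathrm I_{f,c}$ by a minimax/duality argument, obtain $\bar u$ by the Direct Method on ${\rm LIP}_{0,\bfc}(\Omega)$ via Ascoli--Arzel\`a, and read off $1)$--$4)$ as the complementary-slackness relations in the chain of Fenchel--Young inequalities. There are two differences worth recording. First, you justify the interchange of $\inf_\mu$ and $\sup_u$ in one stroke, citing the coercivity of $\mu\mapsto\mathcal C(\mu)$; the paper instead applies the compact-set minimax (Theorem~\ref{thm:min_max_Sorin}) on each $\mathscr K_k^+(\overline\Omega)$ and then passes to the limit $k\to\infty$ by a $\Gamma$-convergence argument for the truncated integrands $c_k^*$ (see the proof of Theorem~\ref{thm:aux_related_to_optimal_mu_linear}). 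Your version is legitimate if one invokes a coercive minimax theorem, but the paper's route makes the approximation step you flag as ``the main obstacle'' more transparent. Second, your derivation of $4)$ by restricting $\mu_{\rm opt}$ to $\Omega$ (same compliance, strictly smaller cost since $c^\infty(1)\ge\alpha>0$) is cleaner than the paper's: there $4)$ is extracted together with $3)$ from the integral identity $\int\frac{|\nabla_{\mu_{\rm opt}}\bar u|^2}{2}\,\rmd\mu^s_{\rm opt}=c^\infty(1)\,\mu^s_{\rm opt}(\overline\Omega)$, using that the approximating gradients vanish on $\partial\Omega$.

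One technical point you gloss over in obtaining $1)$: saying that the Euler--Lagrange equation of $v\mapsto\langle f,v\rangle-\int\frac{|\nabla_{\mu_{\rm opt}}v|^2}{2}\,\rmd\mu_{\rm opt}$ gives $-{\rm div}(\mu_{\rm opt}\nabla_{\mu_{\rm opt}}\bar u)=f$ requires knowing that $\int\nabla_{\mu_{\rm opt}}\bar u\cdot\nabla\phi\,\rmd\mu_{\rm opt}=\int\nabla_{\mu_{\rm opt}}\bar u\cdot\nabla_{\mu_{\rm opt}}\phi\,\rmd\mu_{\rm opt}$ for $\phi\in\mathscr D(\Omega)$, i.e.\ that the tangential gradient is orthogonal to $\nabla\phi-\nabla_{\mu_{\rm opt}}\phi$. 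The paper sidesteps this by identifying $\nabla_{\mu_{\rm opt}}\bar u$ with the optimizer $\bar\sigma$ of the dual problem \eqref{eq:dual_problem_SL}, which satisfies $-{\rm div}(\mu_{\rm opt}\bar\sigma)=f$ by construction; you should do the same, or appeal explicitly to the tangent-bundle structure underlying the definition of $\nabla_\mu$.
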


Also in the linear case, the first step to be done, in order to characterize the optimal measure $\mu_{\rm opt}$, is obtaining a solution $\bar u$ to the auxiliary variational problem \eqref{eq:aux_pb_2}; then conditions 1)--4) above make the link between $\bar u$ and $\mu_{\rm opt}$. Note that, due to the linear growth of the cost function $c$, we have $c^*=+\infty$ outside a bounded set, which implies that $\bar u$ is necessarily Lipschitz continuous.

\smallskip

In the last section of this paper (Section \ref{sec:Examples}) we provide some examples and discuss possible variants of the problem.
\section[Preliminaries]{Preliminaries}\label{sec:preliminaries}
Throughout the paper we shall (mainly) use the following notation: the space of all finite
real-valued Borel measures on \(\R^n\) will be denoted by  
\(\mathscr M(\R^n)\), while \(\mathscr M^+(\R^n)\) stands for the 
set of all non-negative \(\mu\in \mathscr M(\R^n)\). Given any compact set \(K\subset \R^n\), we set
\[
\begin{split}
\mathscr M(K)\coloneqq &\{\mu\in \mathscr M(\R^n):\, {\rm spt}(\mu)\subset K\},\\
\mathscr M^+(K)\coloneqq &\{\mu\in\mathscr M(K):\, \mu\text{ non-negative}\}.
\end{split}
\]
Given any \(\mu\in \mathscr M^+(\R^n)\) and \(p\in [1,+\infty)\), we denote by 
\(L^p_\mu(\Omega,\R^n)\) the space of \(p\)-integrable maps from \(\Omega\) into \(\R^n\); in the case of 
real valued functions we simply write \(L^p_\mu(\Omega)\). We denote by \(\mathcal L^n\) the 
\(n\)-dimensional Lebesgue measure in \(\R^n\). In the case of \(\mu=\mathcal L^n\) we omit the subscript 
\(\mu\) in the notation and we simply write \(L^p(\Omega, \R^n)\). In the integration of functions, we use 
equivalently the notations \(\int_\Omega u(x)\,\rmd\mathcal L^n(x)\) and \(\int_{\Omega}u(x)\, \rmd x\).

The space of smooth functions with compact support in \(\Omega\) is denoted by \(\mathscr D(\Omega)\). The 
notation \(\mathscr D'(\Omega)\) stands for the space of distributions on \(\Omega\), while we denote by 
\(\mathscr D'(\overline\Omega)\) the subset of \(\mathscr D'(\R^n)\) of distributions whose support is 
contained in \(\overline\Omega\). The space of continuous functions on a compact set \(K\subseteq \R^n\) is 
denoted by \(\mathscr C(K)\), while \(\|\cdot\|_\infty\) stands for the maximum norm. 
\subsection{Convex functionals on the space of measures}\label{sec:FunctionalsOnMeasures}
In this subsection we recall some terminology and results regarding the lower semicontinuity of convex 
functionals defined on the space of measures.

Let \((\mathrm X, \mathscr B(\mathrm X),\mathbf m)\) be a Borel space, with \(\mathrm X\) a separable, 
locally compact metric space, \(\mathscr B({\rm X})\) a Borel \(\sigma\)-algebra on \({\rm X}\) and 
\(\mathbf m\) a non-negative, non-atomic and finite measure on \(\mathrm X\). A function \(\varphi\colon 
\mathrm X\times \R\to\R\cup\{+\infty\}\) is said to be an \emph{integrand} if it is Borel measurable. An 
integrand \(\varphi\) is said to be  \emph{normal} if  \(\varphi(x,\cdot)\) 
is lower semicontinuous for \(\mathbf m\)-a.e.\ \(x\in \mathrm X\), while it is said to be  \emph{convex} if 
\(\varphi(x,\cdot)\)  is convex for \(\mathbf m\)-a.e.\ \(x\in \mathrm X\).

Given a  convex function \(g:\R\to \R\cup\{+\infty\}\), we define the recession function \(g^{\infty}:\R\to 
\R\cup \{+\infty\}\) of \(g\) as
\begin{equation}\label{eq:recession}
g^{\infty}(t)=\lim_{s\to +\infty}\frac{g(t_0+st)}{s}, \quad \text{ for every }t\in \R,
\end{equation}
where \(t_0\) is an arbitrary point in \({\rm Dom}(g)\neq \emptyset\) (for the definition of proper function 
and its domain see Subsection \ref{sec:Duality} below). The following properties of the recession function 
\(g^\infty\) can be easily deduced:
\begin{itemize}\label{item:recession}
\item [1)] \(g^\infty\) is positively one-homogeneous, i.e. \(g^\infty(\lambda\,t)=\lambda\,g^\infty(t)\) for every \(\lambda>0\) and all \(t\in \R\);
\item [2)] given any \(t_0\in {\rm Dom}(g)\) it holds that \(g(t)\leq g^\infty(1)(t-t_0)\) for all \(t\in\R\).
\end{itemize}
\smallskip

Below, we use the notation \(\mathscr M(\rm X)\) for the space of all real-valued measures with finite 
variation on \(\rm X\),
while \(\mathscr C_0(\rm X)\) stands for 
the space of continuous functions vanishing on the boundary: namely, those continuous function \(g\colon \rm 
X\to \R\) such that for every \(\varepsilon>0\) there exists \(K_{\varepsilon}\subseteq \rm X\) compact for 
which \(|g(x)|<\varepsilon\) holds everywhere on \({\rm X}\setminus K_{\varepsilon}\). We shall denote by 
\(\|\cdot\|_{\sf TV}\) the total variation norm on the space \(\mathscr M(\rm X)\). The space \(\mathscr 
M({\rm X})\) can be also characterized as the  dual of 
the Banach space \((\mathscr C_0(\rm X), \|\cdot\|_{\infty})\), where the duality paring between \(\mu\in 
\mathscr M(\rm X)\) 
and \(g\in \mathscr C_0(\rm X)\) is given by 
\[
\langle \mu,g \rangle\coloneqq \int g\, \rmd \mu.
\]
We shall often consider the \(\rm weak^*\)-topology on \(\mathscr M(\rm X)\); recall that 
a sequence \((\mu_i)_{i\in \N}\subseteq \mathscr M(\rm X)\) \(\rm weakly^*\)-converges to 
\(\mu\in \mathscr M(\rm X)\) if for every \(g\in \mathscr C_0(\rm X)\) it holds that
\[
\int g\,\rmd\mu_i\to \int g\,\rmd \mu\quad \text{ as }\,i\to +\infty.
\]

Let us also recall that for any \(\mu\in \mathscr M(\rm X)\) there exists a unique 
1-integrable (with respcet to \(\bf m\)) function \(a\in L^1_{\bf m}(\rm X)\) and 
a unique measure \(\mu^s\in \mathscr M(\rm X)\) singular with respect to \(\bf m\), 
such that 
\begin{equation}\label{eq:decomposition_measures}
\mu= a{\bf m}+\mu^s.
\end{equation}

In what follows our interest is in the convex functional \(\Phi\colon \mathscr M(\rm X)\to [0,+\infty]\) 
given by
\begin{equation}\label{eq:Phi}
\Phi(\mu)\coloneqq \int \varphi(x,a(x))\,\rmd {\bf m}(x)
+\int\varphi^\infty\bigg(x,\frac{\rmd\mu^s}{\rmd |\mu^s|}\bigg)\,\rmd |\mu^s|(x),\quad 
\text{ for every }\mu\in \mathscr M(\mathrm X),
\end{equation}
where \(\varphi\colon \rm X\times \R\to [0,+\infty]\) is a normal convex integrand and 
\(\varphi^\infty(x,\cdot)\coloneqq \big(\varphi(x,\cdot)\big)^\infty\) for every \(x\in {\rm X}\). In the 
particular case when \(\varphi\) is independent of the variable \(x\), the functional \(\Phi\) is nothing but the lower semicontinuous envelope {\color{blue}} in the weak*-topology on \(\mathscr M(\rm X)\) of its restriction to the space \(L^1_{\bf m}(\rm X)\), which provides its weak* lower semicontinuity (see \cite{AB}). This is not always the case when we deal with an \(x\)-dependent integrand \(\varphi\); see for instance \cite{AB}, \cite{BV}, \cite{DGBDM}, and \cite{Olech} for the integral representation of the lower semicontinuous envelope \(\bar\Phi\) of the functional \(\Phi|_{L^1_{\bf m}}\), 
as well as for some sufficient conditions granting the equality \(\Phi=\bar \Phi\). 

Nevertheless, motivated by the \(x\)-independent case, we shall concentrate on the functionals \(\Phi\) of the above form, which coincide with the lower semicontinuous envelope of \(\Phi|_{L^1(\bf m)}\). Sufficient conditions one may impose on the integrand \(\varphi\) to achieve such a property are given by the following theorem, proven in \cite[Theorem 8]{BV}.

\begin{thm}\label{thm:semicontinuity_of _integral_funct_on _measures}
Let \((\rm X, \mathscr B(\rm X),\mathbf m)\) be a Borel space and let \(\varphi\colon \rm X\times \R\to 
[0,+\infty]\) be a  normal convex integrand satisfying the following condition:
\[
\text{the functions }\,\varphi^*(\cdot, t)\,\text{ and }\,\varphi^\infty(\cdot, t) \text{ are upper semicontinuous for all } t\in \R.
\]
Then the convex functional \(\Phi\colon \mathscr M(\mathrm X)\to [0,+\infty]\) given by \eqref{eq:Phi} coincides with the lower semicontinuous envelope in the \(weak^*\)-topology on \(\mathscr M(\rm X)\) of the functional \(\Phi|_{L^1(\bf m)}\).
\end{thm}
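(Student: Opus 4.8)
The plan is to derive the statement from convex (Fenchel) duality. Write $\mathcal G\colon\mathscr M(\mathrm X)\to[0,+\infty]$ for the functional equal to $\Phi$ on absolutely continuous measures and to $+\infty$ elsewhere, i.e.\ $\mathcal G=\Phi|_{L^1(\mathbf m)}$ in the notation of the statement; we may assume $\mathcal G$ is proper, since otherwise already the absolutely continuous part of $\Phi$ diverges, $\Phi\equiv+\infty$, and the claim is trivial. Using the identification $\mathscr M(\mathrm X)=(\mathscr C_0(\mathrm X))^*$ recalled above, the weak$^*$-lower semicontinuous envelope of $\mathcal G$ is its biconjugate $\mathcal G^{**}$ by the Fenchel--Moreau theorem; moreover, by Rockafellar's theorem on conjugates of integral functionals associated with normal integrands,
\[
\mathcal G^*(u)=\sup_{a\in L^1(\mathbf m)}\int_{\mathrm X}\bigl(u(x)\,a(x)-\varphi(x,a(x))\bigr)\,\rmd\mathbf m(x)=\int_{\mathrm X}\varphi^*(x,u(x))\,\rmd\mathbf m(x),\qquad u\in\mathscr C_0(\mathrm X).
\]
Consequently the whole theorem reduces to establishing the integral representation
\begin{equation}\label{eq:sketchrepr}
\Phi(\mu)=\sup_{u\in\mathscr C_0(\mathrm X)}\Bigl\{\int_{\mathrm X}u\,\rmd\mu-\int_{\mathrm X}\varphi^*(x,u(x))\,\rmd\mathbf m(x)\Bigr\},\qquad\mu\in\mathscr M(\mathrm X);
\end{equation}
and, as a bonus, once \eqref{eq:sketchrepr} holds $\Phi$ is automatically weak$^*$-lower semicontinuous, being a supremum of weak$^*$-continuous affine functionals.

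For the inequality ``$\ge$'' in \eqref{eq:sketchrepr} I would argue by the Young--Fenchel inequality: fix $u\in\mathscr C_0(\mathrm X)$; if $\int_{\mathrm X}\varphi^*(x,u(x))\,\rmd\mathbf m=+\infty$ there is nothing to prove, and otherwise $\varphi^*(x,u(x))<+\infty$ for $\mathbf m$-a.e.\ $x$. Decomposing $\mu=a\,\mathbf m+\mu^s$, the bound $u(x)\,a(x)\le\varphi(x,a(x))+\varphi^*(x,u(x))$, integrated against $\mathbf m$, takes care of the absolutely continuous part, while for the singular part one uses that $\rmd\mu^s/\rmd|\mu^s|\in\{-1,1\}$ holds $|\mu^s|$-a.e.\ together with the inequality $u(x)\,s\le\varphi^\infty(x,s)$; the latter follows from \eqref{eq:recession} and the estimate $\varphi(x,t_0+\lambda s)\ge u(x)(t_0+\lambda s)-\varphi^*(x,u(x))$. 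It is at this point that the upper semicontinuity of $\varphi^\infty(\cdot,\pm1)$, combined with the continuity of $u$, is needed, in order to transfer the $\mathbf m$-a.e.\ information $u(x)\in{\rm Dom}\,\varphi^*(x,\cdot)$ to $|\mu^s|$-a.e.\ $x$ as well. Summing the two contributions gives $\int_{\mathrm X}u\,\rmd\mu-\int_{\mathrm X}\varphi^*(x,u(x))\,\rmd\mathbf m\le\Phi(\mu)$.

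The inequality ``$\le$'' in \eqref{eq:sketchrepr} is the heart of the matter, and I expect it to be the main obstacle. Given $\mu$ and $\varepsilon>0$, one must produce $u\in\mathscr C_0(\mathrm X)$ with $\int_{\mathrm X}u\,\rmd\mu-\int_{\mathrm X}\varphi^*(x,u(x))\,\rmd\mathbf m\ge\Phi(\mu)-\varepsilon$. The natural candidate is a \emph{measurable} selection $x\mapsto u(x)$ with $u(x)\in\partial\varphi(x,a(x))$ for $\mathbf m$-a.e.\ $x$ (so that $u(x)\,a(x)-\varphi^*(x,u(x))=\varphi(x,a(x))$) and, $|\mu^s|$-a.e., with $u(x)$ taken close to a point of ${\rm Dom}\,\varphi^*(x,\cdot)$ maximizing $t^*\mapsto t^*\,\rmd\mu^s/\rmd|\mu^s|(x)$ (so that $u(x)\,\rmd\mu^s/\rmd|\mu^s|(x)$ is close to $\varphi^\infty\bigl(x,\rmd\mu^s/\rmd|\mu^s|(x)\bigr)$); the existence of such selections is granted by standard measurable-selection theorems for normal integrands. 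The obstruction is that this $u$ is merely Borel measurable, whereas \eqref{eq:sketchrepr} only tests $\Phi$ against \emph{continuous} functions, and this is exactly where both upper semicontinuity hypotheses come in. One discretizes $\mathrm X$ into finitely many Borel sets on which the selection is almost constant (a Lusin-type argument, using the non-atomicity of $\mathbf m$), replaces it there by constants $c_j$ anchored at reference points $x_j$, and invokes the upper semicontinuity of $\varphi^*(\cdot,t)$ and of $\varphi^\infty(\cdot,t)$ to obtain $\varphi^*(x,c_j)\le\varphi^*(x_j,c_j)+\varepsilon$ and $\varphi^\infty(x,c_j)\le\varphi^\infty(x_j,c_j)+\varepsilon$ on small neighbourhoods of the $x_j$; this keeps the penalty $\int_{\mathrm X}\varphi^*(x,u(x))\,\rmd\mathbf m$ under control while $\int_{\mathrm X}u\,\rmd\mu$ changes by at most an $O(\varepsilon)$. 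A last regularization, by a partition of unity subordinate to a finite open cover of $\mathrm X$, turns the piecewise-constant selection into a genuine element of $\mathscr C_0(\mathrm X)$ without spoiling the estimates. Combining ``$\ge$'' and ``$\le$'' yields \eqref{eq:sketchrepr}, hence $\Phi=\mathcal G^{**}$, which is precisely the weak$^*$-lower semicontinuous envelope of $\Phi|_{L^1(\mathbf m)}$, as claimed.
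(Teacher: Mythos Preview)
The paper does not prove this theorem at all: it is quoted verbatim from \cite[Theorem 8]{BV}, so there is no ``paper's own proof'' to compare against. Your overall strategy --- identify the lower semicontinuous envelope with the biconjugate $\mathcal G^{**}$, compute $\mathcal G^*$ via Rockafellar's interchange theorem, and then establish the dual representation \eqref{eq:sketchrepr} --- is exactly the route taken in \cite{BV}, so at the level of architecture your proposal is on target.

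That said, two of your steps are not yet proofs. The ``$\le$'' direction you openly label a sketch, and rightly so: the passage from a measurable near-optimal selection to a \emph{continuous} test function is the substantive part of the Bouchitt\'e--Valadier argument, and the Lusin/partition-of-unity outline you give needs considerable care (one must control $\int\varphi^*(x,u(x))\,\rmd\mathbf m$ after mollification, which is delicate because $\varphi^*(x,\cdot)$ is only lower semicontinuous). More seriously, your ``$\ge$'' argument for the singular part contains a genuine gap. You want $u(x)\,s\le\varphi^\infty(x,s)$ for $|\mu^s|$-a.e.\ $x$, and you propose to obtain it by ``transferring the $\mathbf m$-a.e.\ information $u(x)\in{\rm Dom}\,\varphi^*(x,\cdot)$ to $|\mu^s|$-a.e.\ $x$'' using the upper semicontinuity of $\varphi^\infty(\cdot,\pm1)$. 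But $\mathbf m$ and $\mu^s$ are mutually singular, so $\mathbf m$-a.e.\ information carries no direct consequence $|\mu^s|$-a.e.; and upper semicontinuity of $x\mapsto\varphi^\infty(x,s)-u(x)s$ does not upgrade non-negativity on a dense (or $\mathbf m$-full) set to non-negativity $|\mu^s|$-a.e. (an upper semicontinuous function can dip strictly below its values on a dense set). The correct handling in \cite{BV} of this point uses both hypotheses in a more intertwined way, together with an implicit full-support condition on $\mathbf m$ that is satisfied in the paper's application ($\mathbf m=\mathcal L^n$ on $\overline\Omega$) but which your sketch does not invoke. You should either consult \cite{BV} for the precise mechanism or rework this step.
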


\subsection{Conjugate functions}\label{sec:Duality}
Let \(V\) be a normed vector space and denote by \(V^*\) its topological dual. 
In what follows, we consider \(V\) and \(V^*\) endowed with
weak and  \({\rm weak}^*\) topology, respectively, which make them 
Hausdorff locally convex topological spaces. 
We denote by  \(\langle\cdot,\cdot\rangle\) bilinear pairing between \(V\) and \(V^*\).
Given a function \(F\colon V\to\R\cup\{+\infty\}\), we set
\[{\rm Dom}(F)\coloneqq \{v\in V:\,F(v)<+\infty\},\]
and call it the \emph{effective domain} of \(F\). A function \(F\) is said to be \emph{proper} if \({\rm 
Dom}(F)\neq \emptyset\).
Given a proper function \(F\), the \emph{Fenchel conjugate} (or briefly, \emph{conjugate}) of \(F\) is the 
function \(F^*\colon V^*\to\R\cup\{+\infty\}\) given by
\begin{equation}\label{eq:polar_function}
F^*(v^*)\coloneqq \sup_{v\in V}\big\{\langle v,v^* \rangle-F(v)\big\}
=\sup_{v\in {\rm Dom}(F)}\big\{\langle v,v^* \rangle-F(v)\big\},\quad \text{ for every }v^*\in V^*.
\end{equation}
We list below some useful properties of conjugate functions: fix any
\(F,G\colon V\to \R \cup \{+\infty\}\). Then we have
\begin{itemize}
\item [1)] \(F^*\) is convex and lower semicontinuous with respect to the 
\({\rm weak}^*\) topology on \(V^*\);
\item [2)] if \(F\leq G\) then \(F^*\geq G^*\);
\item [3)] for every \((v,v^*)\in V\times V^*\) it holds that
\[
\langle v, v^*\rangle \leq F(v)+F^*(v^*).
\]
\end{itemize}
The \emph{subdifferential} of \(F\) at \(v\) is defined as
$$\partial F(v)\coloneqq\{v^*\in V^*:\,\langle v, v^*\rangle=F(v)+F^*(v^*)\}.$$
In the next lemma we recall a well-known formula for the subdifferential of a composite function.

\begin{lemma}\label{lem:subdif_composition}
Let \(h\colon \R\to \R\) be a non-decreasing
convex function and let \(G\colon 
R^n\to \R\) be continuous convex function. Then the composite function 
\(\varphi\coloneqq h\circ G\) is convex and it holds that 
\[
\partial \varphi(z)=\partial h\big(G(z)\big)\cdot \partial G(z),\quad \text{ for all }z\in \R^n,
\]
where by the product in the right-hand side we mean the algebraic product of two sets.
\end{lemma}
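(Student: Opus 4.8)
The plan is to prove the subdifferential chain rule $\partial\varphi(z)=\partial h(G(z))\cdot\partial G(z)$ for $\varphi=h\circ G$ by a combination of elementary convex-analytic manipulations and an appeal to the classical (one-dimensional) chain rule for subdifferentials together with the chain rule for the composition of a smooth/continuous convex function with a convex function. First I would record the easy facts: since $h$ is convex and non-decreasing and $G$ is convex and continuous, $\varphi=h\circ G$ is convex and finite (hence continuous) on $\R^n$, so $\partial\varphi(z)$ is a non-empty compact convex set for every $z$, and similarly $\partial G(z)$ and $\partial h(t)$ (for $t\in\R$) are non-empty compact intervals/convex sets. The non-decreasing hypothesis guarantees $\partial h(t)\subset[0,+\infty)$, which is what makes the product $\partial h(G(z))\cdot\partial G(z)$ a convex set (a non-negative scalar times a convex set).

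The inclusion $\supseteq$ I would prove directly from the definition of subgradient. Take $s\in\partial h(G(z))$ and $\xi\in\partial G(z)$; I want to show $s\xi\in\partial\varphi(z)$. For arbitrary $w\in\R^n$ we have, using $\xi\in\partial G(z)$, that $G(w)\ge G(z)+\langle\xi,w-z\rangle$; since $h$ is non-decreasing this gives $h(G(w))\ge h\big(G(z)+\langle\xi,w-z\rangle\big)$, and then using $s\in\partial h(G(z))$ together with $s\ge0$ we get $h\big(G(z)+\langle\xi,w-z\rangle\big)\ge h(G(z))+s\langle\xi,w-z\rangle$. Chaining these, $\varphi(w)\ge\varphi(z)+\langle s\xi,w-z\rangle$, i.e. $s\xi\in\partial\varphi(z)$. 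This step is completely elementary and is where the monotonicity of $h$ is essential.

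The reverse inclusion $\subseteq$ is the main obstacle, since a generic element of $\partial\varphi(z)$ need not obviously factor. My approach would be to exploit the fact that both sides are non-empty compact convex sets and to compare them via support functions, reducing to a one-dimensional computation of directional derivatives: $\varphi'(z;v)=\sigma_{\partial\varphi(z)}(v)$, and one computes $\varphi'(z;v)$ from the definition of the composition. Writing $t_0=G(z)$ and $p=G'(z;v)=\sigma_{\partial G(z)}(v)$, the standard one-dimensional chain rule for the directional derivative of a convex non-decreasing function composed with a convex function gives $\varphi'(z;v)=h'(t_0;p)$ when $p\ge 0$; using $h'(t_0;p)=\max_{s\in\partial h(t_0)} sp$ (valid for $p\ge0$, again by monotonicity $\partial h(t_0)\subset[0,\infty)$) and $p=\max_{\xi\in\partial G(z)}\langle\xi,v\rangle$, one identifies $\varphi'(z;v)$ with $\max\{ \langle s\xi, v\rangle : s\in\partial h(t_0),\ \xi\in\partial G(z)\}$, i.e. with the support function of $\partial h(G(z))\cdot\partial G(z)$; the case $p<0$ is handled symmetrically, and since the support functions of the two closed convex sets agree on all $v$, the sets coincide. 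The only delicate point here is the directional-derivative chain rule $\varphi'(z;v)=h'(G(z);G'(z;v))$, which holds because $h$ is finite and convex on all of $\R$ (hence locally Lipschitz) and $G$ is finite convex; I would either invoke it as standard or prove it from $\varphi(z+tv)=h(G(z+tv))$, $G(z+tv)=G(z)+tG'(z;v)+o(t)$, and the Lipschitz continuity of $h$ near $G(z)$, dividing by $t$ and passing to the limit from each side. Finally one checks that the right-hand product set is closed and convex so the support-function comparison is legitimate, completing the proof.
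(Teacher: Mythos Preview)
The paper does not actually prove this lemma: it is stated as a recall of a ``well-known formula'' and no argument is given. So there is no approach in the paper to compare yours against.

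Your proof is correct. The inclusion $\supseteq$ is the standard two-line computation you give, and for $\subseteq$ the support-function comparison works cleanly: since $h$ is finite convex (hence locally Lipschitz) the directional-derivative chain rule $\varphi'(z;v)=h'\big(G(z);G'(z;v)\big)$ holds, and because $\partial h(G(z))\subset[0,\infty)$ one has, for every $v$,
\[
\sigma_{\partial h(G(z))\cdot\partial G(z)}(v)=\sup_{s\in\partial h(G(z))}\sup_{\xi\in\partial G(z)} s\langle\xi,v\rangle=\sup_{s\in\partial h(G(z))} s\,G'(z;v)=h'\big(G(z);G'(z;v)\big),
\]
so the two support functions agree. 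Your case distinction on the sign of $p=G'(z;v)$ is in fact unnecessary: the identity $\sup_{\xi} s\langle\xi,v\rangle=s\,G'(z;v)$ already uses only $s\ge0$, and $h'(t_0;p)=\sup_{s\in\partial h(t_0)}sp$ holds for all $p\in\R$. The closedness and convexity of the product set (needed to pass from equality of support functions to equality of sets) follow because $\partial h(G(z))$ is a compact interval in $[0,\infty)$ and $\partial G(z)$ is compact convex; the image of the compact set $\partial h(G(z))\times\partial G(z)$ under $(s,\xi)\mapsto s\xi$ is compact, and convexity is a short direct check.
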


\begin{thm}
[{\cite[Chapter IX, Proposition 1.2 and Proposition 2.1]{Ekeland}}]\label{thm:Ekeland_integral_conjugate}
Let \(A\subseteq \R^n\) be an open bounded set
and let \(\varphi\colon A\times \R\to [0,+\infty]\) be a normal convex integrand.
Then the function 
\(\varphi^*\colon A\times \R\to \overline \R\) defined as 
\[
\varphi^*(x,t)\coloneqq \sup_{s\in \R} s\cdot t-\varphi(x,s)\, \quad \text{ for all }(x,t)\in A\times \R
\]
is a normal convex integrand as well. Moreover, if we define 
the functional \(\Phi\colon L^1(A)\to [0,+\infty]\) as
\[
\Phi(g)\coloneqq \int_{A}\varphi\big(x,g(x)\big)\,\rmd \mathcal L^n(x), 
\quad \text{ for every } g\in L^1(A)
\]
and assume that there exists \(g_0\in L^\infty(A)\) such that \(\Phi(g_0)<+\infty\),
then we have
\begin{equation}\label{eq:conjugate_integral_functional}
\Phi^*(h)=\int_{A}\varphi^*\big(x,h(x)\big)\,\rmd \mathcal L^n(x), 
\quad \text{ for every } h\in L^{\infty}(A).
\end{equation}
\end{thm}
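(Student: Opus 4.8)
The plan is to prove the two assertions in turn: that $\varphi^*$ is again a normal convex integrand, and then the interchange formula \eqref{eq:conjugate_integral_functional} on $L^1(A)$, $L^\infty(A)$.

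\emph{Normality of $\varphi^*$.} Convexity and lower semicontinuity of $t\mapsto\varphi^*(x,t)$ are automatic, being general properties of any Fenchel conjugate (cf.\ property 1) in the list following \eqref{eq:polar_function}). The only genuine point is joint measurability of $(x,t)\mapsto\varphi^*(x,t)$. A naive reduction to a countable supremum over rational $s$ fails precisely when $\mathrm{Dom}\,\varphi(x,\cdot)$ has empty interior, so I would instead use the structure theory of normal integrands: since $\varphi$ is normal, the multifunction $x\mapsto\mathrm{Dom}\,\varphi(x,\cdot)$ is measurable, hence (Castaing) there are countably many measurable $s_i\colon A\to\R$ whose values are dense in $\mathrm{Dom}\,\varphi(x,\cdot)$ for a.e.\ $x$; because $\varphi(x,\cdot)$ is lower semicontinuous and convex one then has $\varphi^*(x,t)=\sup_i\big(s_i(x)\,t-\varphi(x,s_i(x))\big)$, a countable supremum of measurable functions of $(x,t)$, hence measurable. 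This is classical (see \cite{Ekeland}).

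\emph{The inequality $\Phi^*\le\int\varphi^*$.} Fix $h\in L^\infty(A)$. For $g\in L^1(A)$ with $\Phi(g)<+\infty$, Fenchel's inequality (property 3) above) gives $g(x)h(x)-\varphi(x,g(x))\le\varphi^*(x,h(x))$ a.e.\ $x$, so $\langle g,h\rangle-\Phi(g)=\int_A\big(gh-\varphi(\cdot,g)\big)\,\rmd\mathcal L^n\le\int_A\varphi^*(\cdot,h)\,\rmd\mathcal L^n$, while for $\Phi(g)=+\infty$ the left side is $-\infty$. Taking the supremum over $g$ yields $\Phi^*(h)\le\int_A\varphi^*(x,h(x))\,\rmd\mathcal L^n(x)$. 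Observe that, by \eqref{eq:polar_function}, $\varphi^*(x,h(x))\ge g_0(x)h(x)-\varphi(x,g_0(x))$ with the right side in $L^1(A)$ — here the qualification hypothesis $g_0\in L^\infty(A)$, $\Phi(g_0)<+\infty$ enters — so $\int_A\varphi^*(\cdot,h)$ is well defined in $(-\infty,+\infty]$.

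\emph{The inequality $\Phi^*\ge\int\varphi^*$.} This is the substantive part. Fix $h$ and $k\in\N$ and set $\psi_k(x):=\min\{\varphi^*(x,h(x)),k\}-1/k$, an $L^1(A)$ function (bounded above by $k$, and below by $g_0h-\varphi(\cdot,g_0)-1/k$). Since $-\min\{\varphi^*(x,h(x)),k\}+1/k$ strictly exceeds $\inf_{s}\big(\varphi(x,s)-s\,h(x)\big)=-\varphi^*(x,h(x))$ for a.e.\ $x$, a measurable selection theorem applied to the normal integrand $(x,s)\mapsto\varphi(x,s)-s\,h(x)$ produces a measurable $g_k\colon A\to\R$ with $g_k(x)h(x)-\varphi(x,g_k(x))\ge\psi_k(x)$ a.e. Such $g_k$ need not be integrable, so I truncate: let $E_N:=\{|g_k|\le N\}$ and $g_k^N:=g_k\mathbf{1}_{E_N}+g_0\mathbf{1}_{A\setminus E_N}\in L^\infty(A)\subset L^1(A)$, noting $\Phi(g_k^N)<+\infty$ since $\varphi(\cdot,g_k)\le g_kh-\psi_k\in L^1(E_N)$ and $\varphi(\cdot,g_0)\in L^1(A)$. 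Then
\[
\langle g_k^N,h\rangle-\Phi(g_k^N)\ge\int_A\psi_k\,\rmd\mathcal L^n-\int_{A\setminus E_N}\psi_k\,\rmd\mathcal L^n+\int_{A\setminus E_N}\big(g_0h-\varphi(\cdot,g_0)\big)\,\rmd\mathcal L^n,
\]
and since $\mathcal L^n(A\setminus E_N)\to0$ while $\psi_k$ and $g_0h-\varphi(\cdot,g_0)$ lie in $L^1(A)$, the last two integrals tend to $0$ as $N\to\infty$. Hence $\Phi^*(h)\ge\int_A\psi_k\,\rmd\mathcal L^n=\int_A\min\{\varphi^*(\cdot,h),k\}\,\rmd\mathcal L^n-\mathcal L^n(A)/k$. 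Letting $k\to\infty$ and invoking monotone convergence (the integrands increase to $\varphi^*(\cdot,h)$ and are bounded below by the fixed $L^1$ function $g_0h-\varphi(\cdot,g_0)$) gives $\Phi^*(h)\ge\int_A\varphi^*(x,h(x))\,\rmd\mathcal L^n(x)$, which with the previous step yields \eqref{eq:conjugate_integral_functional}. This single argument handles uniformly both the case where the right-hand integral is finite and the case where it equals $+\infty$.

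\emph{Main obstacle.} The delicate ingredients are (i) the measurability of $\varphi^*$, which needs a Castaing representation of $x\mapsto\mathrm{Dom}\,\varphi(x,\cdot)$ rather than a plain countable supremum, and (ii) the measurable selection of the near-optimal $g_k$ combined with the truncation bookkeeping; the qualification hypothesis on $g_0$ is exactly what makes $\int_A\varphi^*(\cdot,h)$ meaningful in the first step and what absorbs the ``bad set'' $A\setminus E_N$ in the second.
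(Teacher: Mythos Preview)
The paper does not prove this statement at all: it is quoted verbatim from \cite[Chapter IX, Propositions 1.2 and 2.1]{Ekeland} and no argument is given. So there is nothing to compare against in the paper itself.

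Your proof is correct and is essentially the classical argument from Ekeland--Temam and Rockafellar. A couple of minor remarks. First, the properness of $\varphi(x,\cdot)$ that you implicitly use (to ensure $\varphi^*(x,h(x))>-\infty$ and hence that a near-optimal $s$ exists) follows from the qualification hypothesis: $\Phi(g_0)<+\infty$ forces $\varphi(x,g_0(x))<+\infty$ for a.e.\ $x$. It would be worth saying this explicitly, since the paper's definition of ``normal'' does not include properness. Second, your Castaing-representation route to measurability of $\varphi^*$ is the right one; you might note that in Ekeland--Temam this is packaged as the equivalence between ``normal integrand'' and ``$\mathscr B(A)\otimes\mathscr B(\R)$-measurable with lsc sections'' (Proposition~VIII.1.1 there), from which the closure of the class under conjugation is immediate. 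Otherwise the truncation bookkeeping in the $\ge$ direction is clean and the use of $g_0$ to control both the lower bound on $\psi_k$ and the tail set $A\setminus E_N$ is exactly the standard mechanism.
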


We recall here a result from min/max theory that is useful for our purposes 
(for its proof see for instance \cite{Clarke} or \cite{Sorin}).

\begin{thm}\label{thm:min_max_Sorin}
Let \(V,W\) be two topological vector spaces, let \(K\subseteq V\) be a compact and convex set, and let 
\(C\subseteq W\) be a convex set. Suppose that the function \(L\colon K\times C\to \R\) satisfies
\begin{itemize}
\item [a)] for each \(v\in K\), the function \(L(v,\cdot)\colon C\to \R\) is convex,
\item [b)] for each \(w\in C\), the function \(L(\cdot, w)\colon K\to \R\) is upper semicontinuous and 
concave.
\end{itemize}
Then we have that 
\begin{equation}
\sup_{v\in K}\inf_{w\in C}L(v,w)=\inf_{w\in C}\sup_{v\in K}L(v,w).
\end{equation}
\end{thm}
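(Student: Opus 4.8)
The plan is to derive this from von Neumann's finite-dimensional minimax theorem together with a compactness argument, handling the two inequalities separately. The inequality $\sup_{v\in K}\inf_{w\in C}L(v,w)\le\inf_{w\in C}\sup_{v\in K}L(v,w)$ is the elementary ``weak duality'' bound, valid without any hypotheses, so all the content lies in the reverse inequality; I may also assume $K$ and $C$ nonempty, the empty cases being trivial. To prove the reverse inequality I would fix an arbitrary $\beta>\alpha:=\sup_{v\in K}\inf_{w\in C}L(v,w)$ and aim to exhibit a single $w\in C$ with $L(v,w)\le\beta$ for every $v\in K$; this forces $\inf_{w\in C}\sup_{v\in K}L(v,w)\le\beta$, and letting $\beta\downarrow\alpha$ closes the gap.

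The first step is a compactness reduction. For each $w\in C$ set $A_w:=\{v\in K:\,L(v,w)\ge\beta\}$, which is closed in the compact set $K$ because $L(\cdot,w)$ is upper semicontinuous by assumption~(b); since $\alpha<\beta$, for every $v\in K$ some $w$ satisfies $L(v,w)<\beta$, so $\bigcap_{w\in C}A_w=\emptyset$. By compactness there are $w_1,\dots,w_n\in C$ with $\bigcap_{i=1}^nA_{w_i}=\emptyset$, i.e.\ $\min_{1\le i\le n}L(v,w_i)<\beta$ for all $v\in K$; as $v\mapsto\min_iL(v,w_i)$ is again upper semicontinuous on the compact set $K$, the value $\gamma:=\sup_{v\in K}\min_iL(v,w_i)$ is attained and $\gamma<\beta$.

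The second step convexifies in the $C$-variable and invokes the finite minimax. On $K\times\Delta$, with $\Delta$ the standard simplex of $\R^n$, I would consider $\phi(v,\lambda):=\sum_{i=1}^n\lambda_iL(v,w_i)$: using assumption~(b) and $\lambda_i\ge0$, it is concave and upper semicontinuous in $v$ on the compact convex set $K$, and affine in $\lambda$ on the compact convex set $\Delta$, so von Neumann's theorem gives $\inf_{\lambda\in\Delta}\sup_{v\in K}\phi(v,\lambda)=\sup_{v\in K}\inf_{\lambda\in\Delta}\phi(v,\lambda)=\sup_{v\in K}\min_iL(v,w_i)=\gamma$ (the middle equality because a linear functional on a simplex is minimized at a vertex). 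Finally, for $\lambda\in\Delta$ the point $w_\lambda:=\sum_i\lambda_iw_i$ lies in $C$ by convexity of $C$, and convexity of $L(v,\cdot)$ from assumption~(a) yields $L(v,w_\lambda)\le\phi(v,\lambda)$ for all $v$; hence $\inf_{w\in C}\sup_{v\in K}L(v,w)\le\inf_{\lambda\in\Delta}\sup_{v\in K}\phi(v,\lambda)=\gamma<\beta$, which is exactly the required estimate.

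I expect the only genuinely non-elementary input to be the finite-dimensional minimax theorem used in the second step; one may simply cite it (e.g.\ \cite{Sorin}) or, for a self-contained treatment, prove it by induction on $n$ via a two-function lemma on $K$, or by a Hahn--Banach separation argument in the space of probability measures on $K$. The recurring point to be careful about is that $L$ is only assumed upper semicontinuous in $v$, not continuous, so every supremum over $v\in K$ must be justified as attained using upper semicontinuity together with compactness of $K$; it is precisely this attainment that keeps the strict inequality $\gamma<\beta$ — and therefore the concluding estimate — under control.
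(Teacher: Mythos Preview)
Your argument is the standard Sion-type proof and is essentially correct: the compactness reduction via the finite intersection property, followed by convexification over the simplex and an appeal to a minimax result with one side finite-dimensional, is exactly how this is usually done. The one point to flag is terminological rather than mathematical: what you call ``von Neumann's finite-dimensional minimax theorem'' still has $K$ sitting in an arbitrary topological vector space, so it is not the classical bilinear-on-simplices statement; but you already anticipate this and indicate how to discharge it (induction on $n$, or a separation argument), so the sketch is sound.

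As for comparison with the paper: the paper does not prove Theorem~\ref{thm:min_max_Sorin} at all. It is stated in the preliminaries as a tool, with the line ``for its proof see for instance \cite{Clarke} or \cite{Sorin}'', and no argument is given. So your proposal supplies strictly more than the paper does; there is nothing to compare against beyond the cited references, which contain arguments along the same lines as yours.
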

The following theorem gives a relation between primal and dual problems in the case of compositions with linear operators.
For its proof we refer to \cite[Proposition 2.5]{Bouchite_Convex}.

\begin{thm}\label{thm:dual_problem_general}
Let \(U\) and \(V\) be two Banach spaces and let \(A\colon U\to V\) be a linear operator with the dense domain denoted \(D(A)\). 
Let \(\Psi\colon V\to \R\cup \{+\infty\}\) be convex and lower semicontinuous function. Let \(F\colon U\to \R\cup\{+\infty\}\) be given by
\[F(u)\coloneqq\begin{cases}
 \Psi(Au)&\text{if } u\in D(A)\\
 +\infty&\text{otherwise.} 
\end{cases}\]
Assume that there exists \(u_0\in D(A)\) such that \(F(u_0)<+\infty\) and that \(\Psi\) is continuous at \(Au_0\). Let \(\psi\colon U\to \R\cup\{+\infty\}\) be a convex function. 
Then
\[
\inf_{u\in U} \Psi(Au)+\psi(u)=
\sup_{v\in V^*} -\psi^*(-A^*v)-\Psi^*(v).
\]
Furthermore, we have that 
\begin{equation}\label{eq:optimal_pair}
\text{ a pair }(u,v)\in U\times V^* \text{ is optimal}\ \iff\ v\in \partial \Psi(Au) \text{ and } -A^*v\in \partial \psi(u).  
\end{equation}
\end{thm}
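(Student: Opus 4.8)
\medskip
\noindent\textbf{Proof proposal.}
This is the Fenchel--Rockafellar duality theorem for a composition with a densely defined (possibly unbounded) linear operator, and I would prove it by the perturbation/value-function method together with Young's inequality. For the easy inequality, fix $u\in D(A)$ and $v\in D(A^*)$ and apply property~3) of conjugates to $\Psi$ at the pair $(Au,v)$ and to $\psi$ at the pair $(u,-A^*v)$:
\[
\langle Au,v\rangle\le\Psi(Au)+\Psi^*(v),\qquad\langle u,-A^*v\rangle\le\psi(u)+\psi^*(-A^*v).
\]
Adding these and using the defining relation $\langle Au,v\rangle=\langle u,A^*v\rangle$ of the adjoint on the dense domain $D(A)$, the left-hand sides cancel, so $-\psi^*(-A^*v)-\Psi^*(v)\le\Psi(Au)+\psi(u)$; taking the supremum over $v$ (the summand being read as $-\infty$ when $v\notin D(A^*)$) and the infimum over $u$ gives ``$\sup\le\inf$''. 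Write $p$ for the primal value. If $p=-\infty$ there is nothing left to prove; taking, as is standard, the qualification point $u_0$ to be feasible for the full functional ($\psi(u_0)+\Psi(Au_0)<+\infty$), one also has $p<+\infty$, so it remains to treat $p\in\R$.

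Introduce the perturbed infimum $\beta\colon V\to[-\infty,+\infty]$,
\[
\beta(y)\coloneqq\inf_{u\in D(A)}\big(\psi(u)+\Psi(Au+y)\big).
\]
As $(u,y)\mapsto\psi(u)+\Psi(Au+y)$ is jointly convex on the linear space $D(A)\times V$, its marginal $\beta$ is convex, and $\beta(0)=p$. Now the constraint qualification enters: since $\Psi$ is continuous at $Au_0$, there is a neighbourhood $N$ of $Au_0$ on which $\Psi\le M<+\infty$, so $\beta(y)\le\psi(u_0)+M$ for every $y$ in the neighbourhood $N-Au_0$ of $0$. A convex function bounded above on a neighbourhood of a point of its domain is finite and continuous there; hence $\beta$ is continuous at $0$, so $\partial\beta(0)\neq\emptyset$ and $\beta(0)=\beta^{**}(0)=\sup_{v\in V^*}\big(-\beta^*(v)\big)$, with the supremum attained at every $v\in\partial\beta(0)$.

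It remains to identify $\beta^*$. Expanding the definition and substituting $z=Au+y$ (for fixed $u$, a bijection in the $y$-variable) decouples the two variables:
\[
\beta^*(v)=\sup_{u\in D(A),\,z\in V}\big(\langle z-Au,v\rangle-\psi(u)-\Psi(z)\big)=\Psi^*(v)+\sup_{u\in D(A)}\big(\langle u,-A^*v\rangle-\psi(u)\big)
\]
for $v\in D(A^*)$, and $\beta^*(v)=+\infty$ otherwise; by density of $D(A)$ the last supremum is $\psi^*(-A^*v)$, so $\beta^*(v)=\Psi^*(v)+\psi^*(-A^*v)$. Combined with the previous step this gives $p=\sup_{v\in V^*}\big(-\psi^*(-A^*v)-\Psi^*(v)\big)$ with the supremum attained, which with weak duality is the asserted identity. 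For the optimality characterisation, note that a pair $(u,v)$ realises both extrema (which now coincide) exactly when $\Psi(Au)+\psi(u)+\Psi^*(v)+\psi^*(-A^*v)=0$; adding the cross terms $-\langle Au,v\rangle-\langle u,-A^*v\rangle$, which sum to zero, this becomes
\[
\big(\Psi(Au)+\Psi^*(v)-\langle Au,v\rangle\big)+\big(\psi(u)+\psi^*(-A^*v)-\langle u,-A^*v\rangle\big)=0.
\]
Each parenthesis is nonnegative by Young's inequality, so the equality holds iff both vanish, i.e. iff $v\in\partial\Psi(Au)$ and $-A^*v\in\partial\psi(u)$; this is \eqref{eq:optimal_pair}.

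The main obstacle is the step from the bare convexity of the value function $\beta$ to its subdifferentiability at $0$: this is precisely where the qualification ``$\Psi$ continuous at $Au_0$'' cannot be dispensed with, since otherwise $\beta$ may fail to be lower semicontinuous at $0$ and a genuine duality gap can occur. A secondary technical point is the bookkeeping imposed by the unboundedness of $A$ --- one must use the density of $D(A)$ both to invoke $\langle Au,v\rangle=\langle u,A^*v\rangle$ and to replace the supremum over $D(A)$ by $\psi^*(-A^*v)$, and one must keep the convention that the dual functional equals $-\infty$ for $v\notin D(A^*)$.
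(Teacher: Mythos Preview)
The paper does not prove this theorem at all: it simply quotes the statement and refers to \cite[Proposition 2.5]{Bouchite_Convex} for the proof. So there is no ``paper's own proof'' to compare against; what you have written is precisely the standard perturbation/value-function argument that one finds in that reference (and in Ekeland--Temam, Chapter~III), and the structure --- weak duality via Young, convex value function $\beta$, continuity of $\beta$ at $0$ from the qualification, identification of $\beta^*$, extremality via equality in Young --- is the canonical one.

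Two small technical remarks. First, the theorem as stated does not assume $\psi(u_0)<+\infty$, only $\Psi(Au_0)<+\infty$; you are right that one needs the qualification point to lie in $\mathrm{Dom}(\psi)$ for the bound $\beta(y)\le\psi(u_0)+M$ to be useful, and your parenthetical ``as is standard'' flags this honestly. Second, the step ``by density of $D(A)$ the last supremum is $\psi^*(-A^*v)$'' is not automatic: for a merely convex $\psi$, the supremum of $u\mapsto\langle u,w\rangle-\psi(u)$ over a dense subspace need not agree with the supremum over all of $U$ (take $\psi$ the indicator of a point outside $D(A)$). What saves the argument is that one only needs the inequality $\beta^*(v)\le\Psi^*(v)+\psi^*(-A^*v)$, which follows since enlarging the index set can only increase a supremum; combined with weak duality this already forces the claimed equality of primal and dual values and the attainment in the dual. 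You might rephrase that sentence accordingly.
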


\section[Mass optimization problem]{Mass optimization problem with convex cost}
\subsection[Introduction to the problem]{Introduction to the problem}\label{sec:MOPc}
From now on, we fix a bounded domain (i.e.\ open, bounded and connected set) \(\Omega\subseteq \R^n\) with 
Lipschitz boundary, whose closure \(\overline\Omega\) represents a given \emph{design region}. Given \(f\in 
\mathscr D'(\overline\Omega)\), representing a heat source density, the total energy of the system 
associated with a distribution of a given conductor \(\mu\in\mathscr M^+(\overline\Omega)\) and a smooth 
temperature profile \(u\in \mathscr D(\Omega)\) is given by the quantity
\begin{equation}
\mathcal F_f(\mu, u)\coloneqq \frac{1}{2}\int|\nabla u|^2\,\rmd\mu-\langle f, u\rangle.
\end{equation}
We consider the energy functional \(\mathcal E_f\colon \mathscr M(\overline\Omega)\to\overline\R\) given by 
\begin{equation}
\mathcal E_f(\mu)\coloneqq\left\{\begin{aligned}
&\inf\Big\{\mathcal F_f(\mu, u):\, u\in \mathscr D(\Omega)\Big\},\\
&-\infty,
\end{aligned}
\begin{aligned}
&\quad\mbox{ for }\, \mu\in \mathscr M^+(\overline\Omega),\\
&\quad\mbox{ otherwise.}
\end{aligned}\right.
\end{equation}
Namely, the quantity \(\mathcal E_f(\mu)\) will be considered as the energy associated with the distribution 
\(\mu\in\mathscr M^+(\overline\Omega)\) of a given conductor.

We are interested in finding the best distribution of a given conductor, in order to achieve 
the \emph{minimal compliance}, under the presence of a \emph{convex penalization term} (that we shall also 
refer to as \emph{convex cost}). More precisely, we define the \emph{cost functional} 
\(\mathcal C\colon \mathscr M(\overline\Omega)\to [0,+\infty]\)
\begin{equation}\label{eq:cost_functional}
\mathcal C(\mu)\coloneqq \int_{\Omega} c(x,a(x))\,\rmd x
+\int c^\infty\bigg(x,\frac{\rmd\mu^s}{\rmd |\mu^s|}\bigg)\,\rmd |\mu^s|(x),\quad 
\text{ for every }\mu=a\,\mathcal L^n+\mu^s\in \mathscr M(\overline \Omega),
\end{equation}
where the \emph{(heterogeneous) cost function} \(c\colon\overline \Omega\times \R\to [0,+\infty]\) is a 
normal convex integrand (in the Borel space \((\overline\Omega, \mathscr B(\overline\Omega), \mathcal L^n)\)), 
satisfying the following assumptions.
\begin{itemize}
\item [(P1)]It holds that
\begin{equation}\label{eq:c}
\begin{split}
c(x,t)\geq \alpha\, t+\beta,&\quad \text{ for all }x\in \overline \Omega \text{ and all }t\geq 0, \text{ for some }\alpha>0 \text{ and }\beta \in \R,\\
c(x,t)=+\infty,&\quad \text{ for all }x\in \overline \Omega \text{ and all }t<0.
\end{split}
\end{equation}
\item [(P2)] There exists \(a_0\in L^{\infty}(\Omega)\) and \(\lambda_0>0\)
such that 
\[a_0(x)\geq \lambda_0 \text{ for } \mathcal L^n\text{-a.e.\ } x\in \Omega\quad
\text{ and }\quad
c(\cdot, a_0(\cdot))\in L^1(\Omega).\]
\item [(P3)] the functions \(c^*(\cdot, t)\)
and 
\(c^\infty(\cdot, t)\) are upper semicontinuous for all \(t\in \R.\)
\end{itemize}

For any cost function \(c\) that is \(x\)-independent the hypotheses (P1) and (P2) are exactly 
the ones given in \eqref{propc} and \eqref{tzero}, respectively, while the property (P3) is redundant.
These cost functions motivate our investigation (cf.\ Introduction)  and we will refer to them as  
\emph{homogeneous cost functions}. 

\begin{remark}\label{rmk:P1-P3}
A couple of comments about the hypothesis on the cost functions are in order. The properties (P1) and (P3) are needed for the existence result for our mass optimization problem: (P1) is giving us compactness property, while (P3) provides us with the lower semicontinuity of the functional \(\mathcal C\) (with respect to the \(w^*\)-topology on \(\mathscr M(\overline\Omega)\)), due to
Theorem \ref{thm:semicontinuity_of _integral_funct_on _measures}. In particular, \(\mathcal C\) is the lower semicontinuous envelope in the \(w^*\)-topology on \(\mathscr M(\overline\Omega)\)) of the functional \(\mathcal C|_{L^1(\Omega)}\). 
The property (P2) plays a role in making the relation of our minimization problem with the auxiliary problem \eqref{eq:aux_pb_2} (homogeneous case) and \eqref{eq:aux_problem} (the general, heterogeneous case),
where a \(\Gamma\)-convergence argument will be required (see Theorem \ref{thm:aux_related_to_optimal_mu}).
\end{remark}

Having fixed a cost function \(c\) as above, and having fixed any distribution
\(f\in \mathscr D'(\overline\Omega)\),
we define the functional 
\(\mathcal J_{c,f}\colon \mathscr M(\overline\Omega)\to [0,+\infty]\)
by
\begin{equation}
\mathcal J_{c,f}(\mu)\coloneqq
-\mathcal E_f(\mu)+\mathcal C(\mu), 
\quad \text{ for every }\mu\in \mathscr M(\overline \Omega).
\end{equation}
Notice that, depending on the cost function \(c\), for some heat sources \(f\) it might happen that 
\(\mathcal J_{c,f}\equiv +\infty\). In order to avoid these degenerate cases, we define the set of 
\(c\)-\emph{admissible sources}
\begin{equation}
{\rm Adm}_c\coloneqq \{f\in \mathscr D'(\overline\Omega):\, 
{\rm Dom}(\mathcal J_{c,f})\neq \emptyset\}.
\end{equation}

\begin{remark}\label{rmk:adm}
It is not difficult to see that, due to the property \textrm{(P2)} of the cost function \(c\), it holds
that \(H^{-1}(\Omega)\subseteq {\rm Adm}_c\). In the case in which \(c\) has the property 
\(\sup_{x\in \overline\Omega} c^\infty(x,1)<+\infty\) (that we shall refer to as the
linear (or briefly, (L)) case),
we will show in Section \ref{sec:linear} that \(\mathscr M(\overline\Omega)\subseteq {\rm Adm}_c\). See 
Theorem \ref{thm:aux_related_to_optimal_mu_linear}.
\end{remark}

Thus, given a cost function \(c\) as above and given \(f\in {\rm Adm}_c\), our aim is to study the following 
minimization problem:
\begin{equation}\label{eq:mass_opt_pb}
\underset{\mu\in \mathscr M^+(\overline\Omega)}
{\inf}\mathcal J_{c,f}(\mu).
\end{equation}

\subsection{The existence of minimizers}\label{sec:Existence} 

\begin{thm}\label{thm:existence_mu_opt}
Let \(f\in {\rm Adm}_c\). Then, the set 
\begin{equation}\label{eq:optimal_mu}
\mathscr M_{\rm opt}\coloneqq \{\mu_{\rm opt}\in \mathscr M^+(\overline \Omega):\,\mathcal 
J_{c,f}(\mu_{\rm opt})=\inf_{\mu\in \mathscr M^+(\overline \Omega)}\mathcal J_{c,f}(\mu)\}
\end{equation}
is nonempty. Moreover, there exists \(\mathbf{k}>0\) such that for every \(\mu_{\rm opt}\in \mathscr M_{\rm 
opt}\) and every \(k\geq \mathbf{k}\) we have
\begin{equation}\label{eq:sup_inf_exchange}
\inf_{u\in \mathscr D(\Omega)}\mathcal F_f(\mu_{\rm opt}, u)-\mathcal C(\mu_{\rm opt})
=\inf_{u\in \mathscr D(\Omega)}\, \sup_{\mu\in \mathscr K^+_k(\overline \Omega)}
\,\mathcal F_f(\mu, u)-\mathcal C(\mu)
\end{equation}
where we set  
\[\mathscr K^+_k(\overline \Omega)\coloneqq \big\{\mu\in \mathscr M^+(\overline \Omega):\, 
\mu(\overline\Omega)\leq k\big\},\quad \text{ for every } k\in \N.\]
\end{thm}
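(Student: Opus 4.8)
The plan is to handle the two assertions separately: nonemptiness of $\mathscr M_{\rm opt}$ by the Direct Method of the Calculus of Variations, and the identity \eqref{eq:sup_inf_exchange} by the minimax Theorem \ref{thm:min_max_Sorin} applied on the $w^*$-compact truncations $\mathscr K^+_k(\overline\Omega)$, the threshold $\mathbf k$ being dictated by an a priori bound on the mass of any minimizer.

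\emph{Existence.} Since $\mathcal F_f(\mu,0)=0$ one has $\mathcal E_f(\mu)\le 0$, so $\mathcal J_{c,f}=-\mathcal E_f+\mathcal C\ge 0$ on $\mathscr M(\overline\Omega)$; as $f\in{\rm Adm}_c$, the value $m:=\inf_{\mu\in\mathscr M^+(\overline\Omega)}\mathcal J_{c,f}(\mu)$ is finite (and $\ge 0$). From \eqref{eq:c} one obtains $c^\infty(x,1)\ge\alpha$ for every $x$ (the recession of any $g$ with $g(t)\ge\alpha t+\beta$ satisfies $g^\infty(t)\ge\alpha t$); recalling that $a\ge 0$ a.e.\ and $\rmd\mu^s/\rmd|\mu^s|=1$ for $\mu\in\mathscr M^+(\overline\Omega)$, this yields the coercivity estimate
\[
\mathcal C(\mu)\ \ge\ \alpha\,\mu(\overline\Omega)+\beta\,|\Omega|\qquad\text{for all }\mu\in\mathscr M^+(\overline\Omega).
\]
Along a minimizing sequence $(\mu_i)$ one then has $\mathcal C(\mu_i)\le\mathcal J_{c,f}(\mu_i)$, eventually $\le m+1$, so $\big(\mu_i(\overline\Omega)\big)_i$ is bounded and, by Banach--Alaoglu in $\mathscr M(\overline\Omega)=\mathscr C(\overline\Omega)^*$, up to a subsequence $\mu_i\rightharpoonup^*\mu_{\rm opt}\in\mathscr M^+(\overline\Omega)$. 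Now $\mathcal J_{c,f}$ is $w^*$-lower semicontinuous: $\mathcal C$ is $w^*$-l.s.c.\ by (P3) and Theorem \ref{thm:semicontinuity_of _integral_funct_on _measures} (cf.\ Remark \ref{rmk:P1-P3}), while for each $u\in\mathscr D(\Omega)$ the map $\mu\mapsto\mathcal F_f(\mu,u)$ is $w^*$-continuous (as $|\nabla u|^2\in\mathscr C(\overline\Omega)$), so $\mathcal E_f=\inf_u\mathcal F_f(\cdot,u)$ is $w^*$-u.s.c.\ and $-\mathcal E_f$ is $w^*$-l.s.c.; the two summands being nonnegative, no indeterminacy occurs and $\mathcal J_{c,f}(\mu_{\rm opt})\le\liminf_i\mathcal J_{c,f}(\mu_i)=m$, i.e.\ $\mu_{\rm opt}\in\mathscr M_{\rm opt}$.

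\emph{The identity.} Every $\mu_{\rm opt}\in\mathscr M_{\rm opt}$ satisfies $\mathcal C(\mu_{\rm opt})\le\mathcal J_{c,f}(\mu_{\rm opt})=m$ (both summands of the finite value $\mathcal J_{c,f}(\mu_{\rm opt})$ are finite), hence $\mu_{\rm opt}(\overline\Omega)\le(m-\beta|\Omega|)/\alpha$ by the coercivity estimate. Set $\mathbf k:=\max\{1,(m-\beta|\Omega|)/\alpha\}>0$; then $\mathscr M_{\rm opt}\subseteq\mathscr K^+_k(\overline\Omega)$, and consequently $\inf_{\mu\in\mathscr K^+_k(\overline\Omega)}\mathcal J_{c,f}(\mu)=m$, for every $k\ge\mathbf k$. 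Fix such a $k$ and put $L(\mu,u):=\mathcal F_f(\mu,u)-\mathcal C(\mu)$ on $K\times C$, where $K:=\mathscr K^+_k(\overline\Omega)$ is convex and $w^*$-compact and $C:=\mathscr D(\Omega)$ is convex. For fixed $\mu$, $u\mapsto L(\mu,u)=\tfrac12\int|\nabla u|^2\,\rmd\mu-\langle f,u\rangle-\mathcal C(\mu)$ is convex; for fixed $u$, $\mu\mapsto L(\mu,u)$ is concave and $w^*$-upper semicontinuous (affine in $\mu$ minus the convex $w^*$-l.s.c.\ functional $\mathcal C$). Theorem \ref{thm:min_max_Sorin} then gives
\[
\inf_{u\in\mathscr D(\Omega)}\sup_{\mu\in K}L(\mu,u)\;=\;\sup_{\mu\in K}\inf_{u\in\mathscr D(\Omega)}L(\mu,u)\;=\;\sup_{\mu\in K}\big(\mathcal E_f(\mu)-\mathcal C(\mu)\big)\;=\;-\inf_{\mu\in K}\mathcal J_{c,f}(\mu)\;=\;-m.
\]
On the other hand, for any $\mu_{\rm opt}\in\mathscr M_{\rm opt}$ one has $\inf_{u\in\mathscr D(\Omega)}\mathcal F_f(\mu_{\rm opt},u)-\mathcal C(\mu_{\rm opt})=\mathcal E_f(\mu_{\rm opt})-\mathcal C(\mu_{\rm opt})=-\mathcal J_{c,f}(\mu_{\rm opt})=-m$; comparing with the display, the left- and right-hand sides of \eqref{eq:sup_inf_exchange} both equal $-m$, which is the assertion.

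\emph{Main obstacle.} The delicate step is the minimax one. Beyond the routine verification of the hypotheses, two points deserve care: the threshold $\mathbf k$ must be chosen uniformly over all of $\mathscr M_{\rm opt}$, which is precisely what the coercivity estimate furnishes; and $L$ is only $[-\infty,+\infty)$-valued on $K\times C$, being $-\infty$ exactly where $\mathcal C=+\infty$. The latter does not affect Theorem \ref{thm:min_max_Sorin}: its proof proceeds through the superlevel sets $\{\mu\in K:\,L(\mu,u)\ge\lambda\}$ in the $\mu$-variable, which are closed and convex by the $w^*$-upper semicontinuity and concavity of $L(\cdot,u)$ and automatically discard the points where $L=-\infty$. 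All the rest — the a priori bound, the $w^*$-compactness, and the semicontinuity — is a standard application of the Direct Method.
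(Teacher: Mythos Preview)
Your proof is correct and follows essentially the same approach as the paper's: existence via the Direct Method (coercivity of $\mathcal C$ from (P1), $w^*$-lower semicontinuity of $-\mathcal E_f$ and of $\mathcal C$), and the identity \eqref{eq:sup_inf_exchange} via Theorem~\ref{thm:min_max_Sorin} on the $w^*$-compact sets $\mathscr K^+_k(\overline\Omega)$. Your explicit coercivity bound $\mathcal C(\mu)\ge\alpha\,\mu(\overline\Omega)+\beta|\Omega|$ and your remark on the $-\infty$ values of $L$ are welcome precisions that the paper leaves implicit.
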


\begin{proof}
The functional \(-\mathcal E_f\) is convex and \(w^*\)-lower semicontinuous, being the supremum of 
the linear and \(w^*\)-continuous functionals \(-\mathcal F_f(\cdot,u)\), with \(u\in \mathscr D(\Omega)\). 
By Theorem \ref{thm:semicontinuity_of _integral_funct_on _measures}, 
we also have that the convex functional 
\(\mathcal C\) is \(w^*\)-lower semicontinuous. 
Thus the functional \(\mathcal J_{c,f}=-\mathcal E_f+\mathcal C\) is convex and \(w^*\)-lower semicontinuos. 
Further, due to the growth condition of the cost function \(c\) (see \eqref{eq:c}), we have
\[\mathcal C(\mu)\ge\mu(\overline\Omega),\qquad\text{for all }\mu\in\mathscr M^+(\overline\Omega).\]
This, together with the fact that \(f\in{\rm Adm}_c\), implies that there exists \(\mathbf k>0\) such that
\[
\emptyset\neq \big\{\mu\in \mathscr M^+(\overline \Omega):\, 
\mathcal J_{c,f}(\mu)\leq \mathbf k\big\}\subseteq 
\mathscr K^+_{\mathbf k}(\overline \Omega).
\]
Being the set \(\mathscr K^+_{\mathbf k}(\overline \Omega)\) \(w^*\)-compact, the set 
\(\big\{\mu\in\mathscr M^+(\overline\Omega):\,\mathcal J_{c,f}(\mu)\le\mathbf{k}\big\}\)
(which is \(w^*\)-closed due to the \(w^*\)-lower semicontinuity of \(\mathcal J_{c,f}\))
is \(w^*\)-compact as well. By the direct method of the Calculus of Variations, we conclude that
\(\mathscr M_{\rm opt}\ne\emptyset\).

In order to show \eqref{eq:sup_inf_exchange}, we observe that for any \(k\ge\mathbf{k}\) the set 
\(\mathscr K^+_k(\overline\Omega)\) is nonempty, and
\begin{equation}\label{eq:aux_0}
\inf_{\mu\in \mathscr M^+(\overline\Omega)}\mathcal J_{c,f}(\mu)
=\inf_{\mu\in \mathscr K^+_{k}(\overline \Omega)}\mathcal J_{c,f}(\mu).
\end{equation}  
For any \(k\ge\mathbf{k}\) our minimization problem \eqref{eq:mass_opt_pb} can be written as a min/max-type 
problem in the following way:
\begin{equation}\label{eq:aux_1}
\begin{split}
\inf_{\mu\in \mathscr K^+_{k}(\overline\Omega)}\mathcal J_{c,f}(\mu)
&=\inf_{\mu\in \mathscr K^+_{k}(\overline\Omega)}
\Big(-\!\!\inf_{u\in \mathscr D(\Omega)}\mathcal F_f(\mu,u)+\mathcal C(\mu)\Big)\\
&=-\!\!\sup_{\mu\in \mathscr K^+_{k}(\overline \Omega)}\,
\inf_{u\in \mathscr D(\Omega)}\mathcal F_f(\mu, u)-\mathcal C(\mu).
\end{split}
\end{equation}
Thus, by applying Theorem \ref{thm:min_max_Sorin} with the compact convex set
\(K=\mathscr K^+_{k}(\overline \Omega)\subseteq \mathscr M(\overline \Omega)\), 
the convex set \(C= \mathscr D(\Omega)\), and the function
\(L\colon K\times C\to \R\) given by \(L(\mu, u):=\mathcal F_f(\mu, u)-\mathcal C(\mu)\),
we obtain
\begin{equation}\label{eq:aux_2}
\sup_{\mu\in \mathscr K^+_{k}(\overline \Omega)}\,
\inf_{u\in \mathscr D(\Omega)}\mathcal F_f(\mu, u)-\mathcal C(\mu)
=\inf_{u\in \mathscr D(\Omega)}\, \sup_{\mu\in \mathscr K^+_{k}(\overline \Omega)}
\,\mathcal F_f(\mu, u)-\mathcal C(\mu).
\end{equation}
Putting together \eqref{eq:aux_0}, \eqref{eq:aux_1} and \eqref{eq:aux_2}, we have that 
\eqref{eq:sup_inf_exchange} holds, concluding the proof.
\end{proof}

Next we prove an auxiliary result
providing us with the formula for 
the conjugate \(\mathcal C^*\) of the functional \(\mathcal C\), that will be needed in the sequel.
\begin{lemma}\label{lem:C_conjugate}
The Fenchel conjugate \(\mathcal C^*\) of the cost functional 
\(\mathcal C\) defined in \eqref{eq:cost_functional} reads as
\[
\mathcal C^*(h)=\int_{\Omega}c^*\big(x,h(x)\big)\,\rmd x,\quad \text{ for all }h\in \mathscr C(\ove\Omega).
\]
\end{lemma}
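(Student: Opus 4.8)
The plan is to identify $\mathcal C^*$ with the conjugate of the restriction $\mathcal C_0\coloneqq\mathcal C\big|_{L^1(\Omega)}$ of $\mathcal C$ to absolutely continuous measures, and then to compute $\mathcal C_0^*$ via the integral‑duality formula of Theorem~\ref{thm:Ekeland_integral_conjugate}. Recall that, by definition, for $h\in\mathscr C(\ove\Omega)$ one has $\mathcal C^*(h)=\sup\big\{\int h\,\rmd\mu-\mathcal C(\mu):\,\mu\in\mathscr M(\ove\Omega)\big\}$, and that $\mathcal C_0$ denotes the functional equal to $\int_\Omega c(x,a(x))\,\rmd x$ on measures $\mu=a\,\mathcal L^n$ with $a\in L^1(\Omega)$ and to $+\infty$ on every other $\mu\in\mathscr M(\ove\Omega)$.

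\emph{Step 1 (reduction to the absolutely continuous part).} Since $c(x,\cdot)$ is convex, $\mathcal C_0$ is convex; it is nonnegative and, by (P2), not identically $+\infty$, so it admits the $w^*$‑continuous affine minorant $0$. By Remark~\ref{rmk:P1-P3} — which rests on (P3) and Theorem~\ref{thm:semicontinuity_of _integral_funct_on _measures} — the functional $\mathcal C$ is precisely the lower semicontinuous envelope of $\mathcal C_0$ in the $w^*$‑topology on $\mathscr M(\ove\Omega)$. As $\mathcal C_0$ is already convex and has a continuous affine minorant, this envelope coincides with the biconjugate $\mathcal C_0^{**}$; hence $\mathcal C^*=(\mathcal C_0^{**})^*=\mathcal C_0^*$, using the identity $F^{***}=F^*$. (Alternatively, one proves $\mathcal C^*\le\mathcal C_0^*$ directly by bounding, for each $\mu$ with $\mathcal C(\mu)<+\infty$, the quantity $\int h\,\rmd\mu-\mathcal C(\mu)$ along a $w^*$‑recovery sequence $a_i\,\mathcal L^n\to\mu$ with $\mathcal C_0(a_i\,\mathcal L^n)\to\mathcal C(\mu)$ — available because (P1) makes the relevant sublevels bounded, hence $w^*$‑metrizable — using $\int h\,\rmd(a_i\,\mathcal L^n)\to\int h\,\rmd\mu$; the reverse inequality $\mathcal C^*\ge\mathcal C_0^*$ is immediate from $\mathcal C\le\mathcal C_0$.)

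\emph{Step 2 (computation of $\mathcal C_0^*$ and conclusion).} Since $\mathcal L^n(\pa\Omega)=0$, for $a\in L^1(\Omega)$ and $h\in\mathscr C(\ove\Omega)$ we have $\int h\,\rmd(a\,\mathcal L^n)=\int_\Omega a(x)h(x)\,\rmd x$, whence $\mathcal C_0^*(h)=\Phi^*(h|_\Omega)$, where $\Phi(g)\coloneqq\int_\Omega c(x,g(x))\,\rmd x$ for $g\in L^1(\Omega)$. The cost $c$ is a normal convex integrand on the open bounded set $\Omega$; by (P2) there is $a_0\in L^\infty(\Omega)$ with $\Phi(a_0)<+\infty$; and $h|_\Omega\in L^\infty(\Omega)$. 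Theorem~\ref{thm:Ekeland_integral_conjugate} then gives $\Phi^*(h)=\int_\Omega c^*(x,h(x))\,\rmd x$, and combining with Step~1 yields $\mathcal C^*(h)=\int_\Omega c^*(x,h(x))\,\rmd x$, as claimed. The one delicate point is Step~1 — that one may discard the singular term of $\mathcal C$ when taking the conjugate; this is exactly what the envelope property granted by (P3) ensures, and it is consistent with the fact that $c^\infty$ does not appear in the final formula.
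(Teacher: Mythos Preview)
Your proof is correct and follows essentially the same route as the paper: both arguments first reduce $\mathcal C^*$ to the conjugate of the restriction $\mathcal C|_{L^1(\Omega)}$ by invoking the fact (from (P3) and Theorem~\ref{thm:semicontinuity_of _integral_funct_on _measures}) that $\mathcal C$ is the $w^*$-lower semicontinuous envelope of that restriction, and then apply Theorem~\ref{thm:Ekeland_integral_conjugate} to obtain the integral formula. Your treatment is slightly more explicit in justifying the reduction via $F^{***}=F^*$ and in invoking (P2) to guarantee the $L^\infty$ element required by Theorem~\ref{thm:Ekeland_integral_conjugate}, but the strategy is the same.
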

\begin{proof}
Given any \(h\in \mathscr C(\overline \Omega)\), by the definition of the Fenchel conjugate in 
\eqref{eq:polar_function} and taking into account the fact that \(\mathcal C\) is the lower 
semicontinuous envelope of \(\mathcal C|_{L^1(\Omega)}\) in the weak*-topology on measures (tanks to the property (P3) of the cost function \(c\)), we have that
\[
\mathcal C^*(h)
=\sup_{\mu\in \mathscr M(\overline \Omega)}\int h\,\rmd \mu-\mathcal C(\mu)
=\sup_{a\in L^1(\Omega)}
\int_{\Omega}\big(h(x) a(x)-c\big(x,a(x)\big)\big)\,\rmd x.
\]
By using Theorem \ref{thm:Ekeland_integral_conjugate} applied to the functional
\[
L^1(\Omega)\ni g\mapsto \int_{\Omega} c\big(x,g(x)\big)\,\rmd x,
\]
which is proper, since the functional \(\mathcal C\) enjoys the same property,
we have that
\[
\sup_{a\in L^1(\Omega)}
\int_{\Omega}\big(a(x)h(x)-c\big(x,a(x)\big)\big)\,\rmd x
=\int_{\Omega} c^*\big(x,h(x)\big)\,\rmd x
\]
as required.
\end{proof}

We collect here some observations that will be useful in the next sections.
Let us recall from Theorem \ref{thm:dual_problem_general} that given any 
\(\mu\in \mathscr M^+(\ove\Omega)\),
the dual problem associated to the minimization problem 
\(
\inf_{u\in \mathscr D(\Omega)} 
\mathcal F_f(\mu,u)-\mathcal C(\mu)
\)
reads 
\begin{equation}\label{eq:dual_problem_SL}
\sup \left\{-\int \frac{|\sigma(x)|^2}{2}\,\rmd \mu:\,\sigma \in L^2_\mu(\Omega,\R^n),\, -{\rm div}(\mu\sigma)=f \text{ in }\mathscr D'(\Omega)\right\}-\mathcal C(\mu).
\end{equation}
In the above formula, the notation \({\rm div}(\mu\sigma)\) stands for the  distributional  \(\mu\)-\emph{divergence} of a vector field \(\sigma\). Namely, \({\rm div}(\mu\sigma)\in \mathscr D'(\Omega)\) is defined by
\begin{equation}\label{eq:mu_divergence}
\langle {\rm div}(\mu\sigma) , u\rangle=-\int \sigma\cdot \nabla u\,\rmd \mu, \quad \text{ for every }u\in \mathscr D(\Omega).
\end{equation}
Recall also that, whenever \(f\in {\rm Adm}_c\) and \(\mu\in {\rm Dom}(\mathcal J_{f,c})\), we have that
the supremum in \eqref{eq:dual_problem_SL} is achieved.

\section{The case of superlinear cost function}
In this section we provide optimality conditions for the optimal conductivities \(\mu_{\rm opt}\in\mathscr M_{\rm opt}\) when the cost function \(c\) has a superlinear growth at infinity. More specifically, we consider throughout this section the following case:
\begin{equation}\tag{SL}
\textbf{Superlinear case:}\qquad \inf_{x\in \ove\Omega}c^\infty(x,1)\equiv +\infty, \quad \text{ where } c^{\infty}(x,1)\coloneqq \big(c(x,\cdot)\big)^{\infty}(1).
\end{equation}
Notice that in this case the domain of finiteness \({\rm Dom}(\mathcal C)\) of the cost functional 
\(\mathcal C\) is contained in the space of measures that are absolutely continuous with respect to the Lebesgue measure and thus all the optimal 
conductivities \(\mu_{\rm opt}\in \mathscr M_{\rm opt}\) are of the form 
\[
\mu_{\rm opt}=a_{\rm opt}\,\mathcal L^n,\quad \text{ with }a_{\rm opt}\in L^1(\Omega).
\]

For a simpler and clearer presentation, we consider first the homogeneous case, i.e.\ with \(c(x,t)=c(t)\), and then the heterogeneous case, emphasizing in this way the differences due to the \(x\)-dependence of the cost function \(c\).

\subsection{Homogeneous cost function}
Throughout this subsection we add the following assumption to the cost function \(c\):
\[
\textbf{Homogeneous cost:}\qquad c(x,t)=c(t)\, \text{ for all }t\in \R.
\]

In order to characterize the optimal conductors 
\(a_{\rm opt}\) in the next sections we shall examine  the
auxiliary variational problem \eqref{eq:aux_pb_2}. Recall for the future reference that the notation 
\({\rm I}_{f,c}\) stand for the value of the infimum in the problem \eqref{eq:aux_pb_2}.

\begin{thm}[The auxiliary problem in (SL) homogenoeus case]\label{thm:auxiliary_problem}
Let \(f\in H^{-1}(\Omega)\). 
Then it holds that 
\begin{equation}\label{eq:aux_problem_relaxed}
\inf_{u\in \mathscr D(\Omega)}\int_{\Omega} c^*\left(\frac{|\nabla u(x)|^2}{2}\right)\,\rmd x
-\langle f,u\rangle
={\rm I}_{f,c}.
\end{equation}
Moreover, the auxiliary problem \eqref{eq:aux_pb_2}
admits a solution and any solution \(\bar u\in H^1_0(\Omega)\) satisfies 
\begin{equation}\label{eq:optimal_pairs_hom}
-{\rm div}(\bar v)=f,\quad\text{where}\quad\bar v(x)\in\partial c^*\left(\frac{|\nabla \bar u(x)|^2}{2}\right)\nabla\bar u(x)\,\text{ for }\mathcal L^n\text{-a.e.\ }x\in\Omega. 
\end{equation}
\end{thm}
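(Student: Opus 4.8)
The plan is to realize the left-hand side of \eqref{eq:aux_problem_relaxed} as a primal problem in the sense of Theorem \ref{thm:dual_problem_general} and to compute its dual, recognizing that the dual is exactly ${\rm I}_{f,c}$ (or rather, that the common value equals the dual of \eqref{eq:aux_pb_2}). Concretely, I would take $U = H^1_0(\Omega)$, $V = L^2(\Omega,\R^n)$, and let $A = \nabla\colon U \to V$ be the gradient operator, which is bounded with dense domain (indeed all of $H^1_0$). For the outer function I would set $\Psi(v) \coloneqq \int_\Omega c^*\!\big(|v(x)|^2/2\big)\,\rmd x$ on $V$; by Lemma \ref{lem:subdif_composition} applied with $h = c^*$ (nondecreasing since $c^* $ inherits monotonicity from the fact that ${\rm Dom}(c)\subseteq[0,+\infty)$ forces $c^*$ nondecreasing) and $G(z) = |z|^2/2$, the integrand $z\mapsto c^*(|z|^2/2)$ is convex, so $\Psi$ is convex; lower semicontinuity follows by Fatou together with Theorem \ref{thm:Ekeland_integral_conjugate}. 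I would take $\psi(u) \coloneqq -\langle f,u\rangle$, which is linear and continuous on $H^1_0$ since $f\in H^{-1}(\Omega)$, so $\psi^*(-A^*v)$ is the indicator of the constraint $\{v : -{\rm div}\,v = f\}$, and $\psi^*$ at that point equals $0$.

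The continuity hypothesis in Theorem \ref{thm:dual_problem_general} — existence of $u_0\in D(A)$ with $\Psi(Au_0)<+\infty$ and $\Psi$ continuous at $\nabla u_0$ — is where I expect the one genuine subtlety. For the finiteness of $\Psi(\nabla u_0)$ one picks $u_0$ small in $W^{1,\infty}$; by \eqref{tzero} there is $t_0>0$ with $c(t_0)<+\infty$, hence $c^*$ is finite (indeed, by the growth $c(t)\ge\alpha t+\beta$, $c^*$ is finite on $(-\infty,\alpha]$ at least — actually on an interval containing $0$), so taking $u_0$ with $\|\nabla u_0\|_\infty^2/2$ below the relevant threshold gives $\Psi(\nabla u_0)<+\infty$. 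Continuity of $\Psi$ at such an interior point in the $L^2$-topology needs $c^*$ to be locally bounded near that value, which follows from convexity and finiteness of $c^*$ on a neighborhood; one then uses a standard dominated-convergence / Krasnoselskii-type argument, or simply appeals to the fact that a convex functional finite and bounded above on a neighborhood of a point is continuous there. (If $c^*$ is actually $+\infty$ on a half-line — impossible here, since $c$ superlinear forces $c^*$ finite everywhere — no issue arises; in the superlinear case $c^\infty(1)=+\infty$ so $c^*$ is finite on all of $\R$, which in fact makes this step painless.) Granting this, Theorem \ref{thm:dual_problem_general} yields
\[
\inf_{u\in H^1_0(\Omega)}\int_\Omega c^*\!\left(\frac{|\nabla u|^2}{2}\right)\rmd x - \langle f,u\rangle
= \sup\left\{-\Psi^*(v) : v\in L^2(\Omega,\R^n),\ -{\rm div}\,v = f\right\},
\]
and the supremum is attained, giving a solution $\bar v$ of the dual; the optimality relations \eqref{eq:optimal_pair} read $\bar v\in\partial\Psi(\nabla\bar u)$ and $-A^*\bar v = -{\rm div}\,\bar v = f$.

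It remains to do two things: identify the subdifferential $\partial\Psi$ pointwise, and check that the left-hand side of \eqref{eq:aux_problem_relaxed} really equals ${\rm I}_{f,c}$ and that a primal minimizer $\bar u$ exists. For the first, by Theorem \ref{thm:Ekeland_integral_conjugate} (which gives that the conjugate of the integral functional is the integral of the conjugate, hence the subdifferential of the integral functional is computed integrand-wise) combined again with Lemma \ref{lem:subdif_composition}, one gets $\bar v(x)\in \partial c^*(|\nabla\bar u(x)|^2/2)\,\nabla\bar u(x)$ for $\mathcal L^n$-a.e.\ $x$, which is precisely \eqref{eq:optimal_pairs_hom} once combined with $-{\rm div}\,\bar v = f$. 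For the second point: $\langle f,\cdot\rangle$ extends continuously from $\mathscr D(\Omega)$ to $H^1_0(\Omega)$, and $\Psi\circ\nabla$ is $H^1_0$-lower semicontinuous (by the convexity+l.s.c.\ just established, it is weakly l.s.c.), while coercivity on $H^1_0$ comes from $c^*(s)\ge$ (a positive multiple of $s$) $-$ const for large $s$ — a consequence of the dual growth bound implied by \eqref{propc} (since $c(t)\ge\alpha t+\beta$ gives $c^*(s)\le s t_0 - c(t_0)$ only an upper bound, but the superlinearity $c^\infty(1)=+\infty$ gives precisely $c^*(s)/s\to+\infty$, hence the needed lower bound $c^*(s)\ge\gamma s - C$ for every $\gamma$); thus the direct method produces a minimizer $\bar u\in H^1_0(\Omega)$, and density of $\mathscr D(\Omega)$ in $H^1_0(\Omega)$ together with continuity of both terms along that approximation gives \eqref{eq:aux_problem_relaxed}. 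The main obstacle, as flagged, is the qualification/continuity hypothesis in Theorem \ref{thm:dual_problem_general}; everything else is bookkeeping with the already-cited results.
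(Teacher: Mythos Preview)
Your proposal is correct and follows essentially the same route as the paper: both invoke Theorem~\ref{thm:dual_problem_general} for the primal--dual pair, Theorem~\ref{thm:Ekeland_integral_conjugate} to pass the conjugate/subdifferential inside the integral, and Lemma~\ref{lem:subdif_composition} for the chain rule $\partial\varphi(z)=\partial c^*\big(|z|^2/2\big)\,z$. The paper handles the two points you flag more economically: coercivity is obtained directly from \eqref{tzero} (which gives $c^*(s)\ge t_0 s-c(t_0)$ and hence quadratic growth of $\varphi(z)=c^*(|z|^2/2)$, so superlinearity of $c$ is not needed here), and the relaxation identity \eqref{eq:aux_problem_relaxed} is deduced by citing \cite[Chapter~X, Proposition~2.6]{Ekeland} rather than your density-plus-continuity sketch.
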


\begin{proof}
The first part of the statement directly follows from \cite[Chapter X, Proposition 2.6]{Ekeland} and the growth condition from below on 
the function 
\[\varphi(s)\coloneqq c^*(s^2/2)\]
granted by the property \eqref{tzero} of the cost function \(c\). 
The same property ensures that the functional 
\[
H^1_0(\Omega)\ni u\mapsto 
{\rm I_{\varphi}}(\nabla u)\coloneqq\int_{\Omega}\varphi\big(\nabla u(x)\big)\,\rmd x
\]
is coercive in the weak topology on \(H^1_0(\Omega)\), thus the problem \eqref{eq:aux_pb_2} admits a solution in \(H^1_0(\Omega)\). The dual problem to \eqref{eq:aux_pb_2} (cf.\ Theorem \ref{thm:dual_problem_general}) reads as 
\[
\sup\big\{ {\rm I}_{\varphi}^*(v):\, 
v\in L^2(\Omega)\,\text{ such that }\, -{\rm div}(v)=f\big\}, 
\]
so that the optimal pairs \((u,v)\) satisfy
\[
-{\rm div}(v)=f\quad \text{ and }\quad
v\in \partial {\rm I}_{\varphi}(\nabla u).
\]
Recalling from Theorem \ref{thm:Ekeland_integral_conjugate}
that \({\rm I}_\varphi^*(v)=\int_{\Omega}\varphi^*(v(x))\,\rmd x\),
the second condition gives
\[
\int_{\Omega}\varphi\big(\nabla u(x)\big)+ \varphi^*\big(v(x)\big)\,\rmd x
= \int_{\Omega} v(x)\cdot \nabla u(x)\,\rmd x,
\]
or in other words that
\(
v(x)\in \partial \varphi(\nabla u(x))\)  for \(\mathcal L^n\)-a.e.\ \(x\in \Omega\).
Being \(c^*\) non-decreasing and \(|\cdot|^2\) a continuous function, it follows from Lemma \ref{lem:subdif_composition} that
\[\partial\varphi(z)
=z\,\partial c^*\left(\frac{|z|^2}{2}\right)\quad\text{ for all }z\in\R^n.\]
Taking \(z=\nabla u(x)\) we obtain the condition stated in \eqref{eq:optimal_pairs_hom}.
\end{proof}

\begin{thm}\label{thm:aux_related_to_optimal_mu}
 Let \(f\in H^{-1}(\Omega)\). Then for every optimal conductivity 
\(a_{\rm opt}\in \mathscr M_{\rm opt}\) it holds that 
\[
\inf_{u\mathscr D(\Omega)}\int_{\Omega} \frac{|\nabla u(x)|^2}{2}a_{\rm opt}(x)\,\rmd x-\langle f,u\rangle -\int_{\Omega} c\big(a_{\rm opt}(x)\big)\,\rmd x=\min_{u\in H^1_0(\Omega)}\int_{\Omega} c^*\left(\frac{|\nabla u(x)|^2}{2}\right)\,\rmd x
-\langle f,u\rangle.
\]
\end{thm}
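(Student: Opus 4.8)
The plan is to recognise that, once the definitions are unwound, the claimed identity is exactly the statement that there is \emph{no duality gap} between the mass optimization problem \eqref{eq:mass_opt_pb} and the auxiliary problem \eqref{eq:aux_pb_2}. Indeed, in the superlinear case every $\mu_{\rm opt}\in\mathscr M_{\rm opt}$ is absolutely continuous (a singular part would force $\mathcal C=+\infty$), so the left-hand side equals $\mathcal E_f(a_{\rm opt}\mathcal L^n)-\mathcal C(a_{\rm opt}\mathcal L^n)=-\mathcal J_{c,f}(a_{\rm opt}\mathcal L^n)=-\inf_{\mu\in\mathscr M^+(\ove\Omega)}\mathcal J_{c,f}(\mu)$, while the right-hand side is ${\rm I}_{f,c}$ by the very definition \eqref{eq:aux_pb_2} together with the attainment granted by Theorem \ref{thm:auxiliary_problem}. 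Thus it suffices to prove $\inf_{\mu\in\mathscr M^+(\ove\Omega)}\mathcal J_{c,f}(\mu)=-{\rm I}_{f,c}$, which I would obtain as two separate inequalities.

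For the inequality $\inf_\mu\mathcal J_{c,f}(\mu)\geq-{\rm I}_{f,c}$ (weak duality): for every $u\in\mathscr D(\Omega)$ and every $\mu\in\mathscr M^+(\ove\Omega)$ one has $\mathcal J_{c,f}(\mu)\geq-\mathcal F_f(\mu,u)+\mathcal C(\mu)=\langle f,u\rangle-\tfrac12\int|\nabla u|^2\,\rmd\mu+\mathcal C(\mu)$. Minimising over $\mu$, and using that $\sup_{\mu\in\mathscr M^+(\ove\Omega)}\big(\tfrac12\int|\nabla u|^2\,\rmd\mu-\mathcal C(\mu)\big)=\mathcal C^*\big(\tfrac{|\nabla u|^2}{2}\big)=\int_\Omega c^*\big(\tfrac{|\nabla u(x)|^2}{2}\big)\,\rmd x$ — legitimate by Lemma \ref{lem:C_conjugate}, since $\tfrac{|\nabla u|^2}{2}\in\mathscr C(\ove\Omega)$ for $u\in\mathscr D(\Omega)$ — one gets $\inf_\mu\mathcal J_{c,f}(\mu)\geq\langle f,u\rangle-\int_\Omega c^*\big(\tfrac{|\nabla u(x)|^2}{2}\big)\,\rmd x$; taking now the supremum over $u\in\mathscr D(\Omega)$ and invoking \eqref{eq:aux_problem_relaxed} gives the claim.

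For the reverse inequality I would exhibit an explicit competitor built from the solution of the auxiliary problem. Let $\bar u\in H^1_0(\Omega)$ and $\bar v\in L^2(\Omega,\R^n)$ be as in Theorem \ref{thm:auxiliary_problem}, so that $-{\rm div}\,\bar v=f$ and $\bar v(x)\in\partial c^*\big(\tfrac{|\nabla\bar u(x)|^2}{2}\big)\nabla\bar u(x)$ for a.e.\ $x$. Choose a measurable selection $a(x)\in\partial c^*\big(\tfrac{|\nabla\bar u(x)|^2}{2}\big)$ with $\bar v=a\nabla\bar u$ a.e.\ (on $\{\nabla\bar u\neq0\}$ take $a=\bar v\cdot\nabla\bar u/|\nabla\bar u|^2$; on $\{\nabla\bar u=0\}$, where necessarily $\bar v=0$, take $a$ equal to a minimizer of $c$, i.e.\ an element of $\partial c^*(0)$, which is nonempty because $c$ is coercive and lower semicontinuous), and set $\mu:=a\,\mathcal L^n$. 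By the Fenchel--Young equality, $c(a(x))=\tfrac{|\nabla\bar u(x)|^2}{2}a(x)-c^*\big(\tfrac{|\nabla\bar u(x)|^2}{2}\big)$; the first summand equals $\tfrac12\,\bar v\cdot\nabla\bar u\in L^1(\Omega)$, and the second lies in $L^1(\Omega)$ (its integral is ${\rm I}_{f,c}+\langle f,\bar u\rangle<+\infty$ and it is bounded below by $c^*(0)=-\inf c>-\infty$, using \eqref{tzero}), so $\int_\Omega c(a)\,\rmd x<+\infty$, whence also $a\in L^1(\Omega)$ by the linear bound $c\geq\alpha t+\beta$ in \eqref{eq:c}; thus $\mu\in\mathscr M^+(\ove\Omega)$ and $\mathcal C(\mu)=\int_\Omega c(a)\,\rmd x$. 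Moreover $\sigma:=\nabla\bar u$ is admissible in the dual problem \eqref{eq:dual_problem_SL} for $\mathcal E_f(\mu)$ — it belongs to $L^2_\mu$ because $\int|\nabla\bar u|^2 a\,\rmd x=\int\bar v\cdot\nabla\bar u\,\rmd x<+\infty$, and $-{\rm div}(\mu\sigma)=-{\rm div}\,\bar v=f$ — so Young's inequality gives $\mathcal E_f(\mu)\geq-\tfrac12\int|\nabla\bar u|^2 a\,\rmd x$. Combining this with $\mathcal C(\mu)=\tfrac12\int a|\nabla\bar u|^2\,\rmd x-\int c^*\big(\tfrac{|\nabla\bar u|^2}{2}\big)\,\rmd x$ and with the integration by parts $\langle f,\bar u\rangle=\int\bar v\cdot\nabla\bar u\,\rmd x=\int a|\nabla\bar u|^2\,\rmd x$ (legitimate since $\bar v\in L^2(\Omega,\R^n)$ and $\bar u\in H^1_0(\Omega)$), one finds
\[
\mathcal J_{c,f}(\mu)=-\mathcal E_f(\mu)+\mathcal C(\mu)\leq\int a|\nabla\bar u|^2\,\rmd x-\int c^*\Big(\tfrac{|\nabla\bar u|^2}{2}\Big)\,\rmd x=\langle f,\bar u\rangle-\int c^*\Big(\tfrac{|\nabla\bar u|^2}{2}\Big)\,\rmd x=-{\rm I}_{f,c},
\]
the last equality because $\bar u$ realises the minimum in \eqref{eq:aux_pb_2}.

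Putting the two inequalities together gives $\inf_\mu\mathcal J_{c,f}(\mu)=-{\rm I}_{f,c}$ (and, incidentally, that $\mu=a\mathcal L^n$ is an optimal measure), and reading this back through the identification of the first paragraph yields the statement for every $a_{\rm opt}\in\mathscr M_{\rm opt}$. The step that really needs care is the construction of the competitor $\mu$ in the third paragraph: the measurable selection of $a$ and, above all, the verification that $\mathcal C(\mu)<+\infty$; everything else is Fenchel-duality bookkeeping plus one integration by parts. I expect the superlinearity not to enter the duality argument itself, only the first paragraph — it is what guarantees that every optimal measure is absolutely continuous, so that the left-hand side can be written, as it is, purely in terms of $a_{\rm opt}$.
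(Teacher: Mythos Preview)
Your proof is correct, but the second inequality is obtained by a genuinely different route from the paper's.

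For the easy direction (your weak-duality paragraph) the two arguments coincide: both compute $\sup_\mu\big(\tfrac12\int|\nabla u|^2\,\rmd\mu-\mathcal C(\mu)\big)$ via Lemma~\ref{lem:C_conjugate} and then invoke \eqref{eq:aux_problem_relaxed}. The paper phrases this through the min--max identity \eqref{eq:sup_inf_exchange}, but the content is the same.

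For the reverse inequality the approaches diverge. The paper never constructs a competitor; it introduces the truncated costs $c_k=c+\chi_{[0,k/\mathcal L^n(\Omega)]}$, observes that $L^\infty_{k/\mathcal L^n(\Omega)}(\Omega)\subseteq L^1_k(\Omega)$, and thereby bounds the right-hand side of \eqref{eq:sup_inf_exchange} from below by $\inf_{u\in H^1_0}\int c_k^*(|\nabla u|^2/2)\,\rmd x-\langle f,u\rangle$; a $\Gamma$-convergence argument (monotone convergence $c_k^*\nearrow c^*$ plus equicoercivity from property (d)) then passes to the limit $k\to\infty$. Your argument instead exploits the Euler--Lagrange information \eqref{eq:optimal_pairs_hom} already available from Theorem~\ref{thm:auxiliary_problem}: you read off $a\in\partial c^*(|\nabla\bar u|^2/2)$ with $\bar v=a\nabla\bar u$, verify $a\in L^1(\Omega)$ via Fenchel--Young, and test the dual problem \eqref{eq:dual_problem_SL} with $\sigma=\nabla\bar u$. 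This is more elementary (no $\Gamma$-limit) and has the bonus of exhibiting an explicit optimal measure, thereby anticipating one implication of Theorem~\ref{thm:opt_cond_hom}. The trade-off is that your construction relies on $\partial c^*(s)\ne\emptyset$ for every $s\ge0$, which holds in the superlinear case (since then $c^*$ is finite, hence continuous, on $[0,\infty)$) but would fail at the boundary of ${\rm Dom}(c^*)$ in the linear case; the paper's $\Gamma$-convergence scheme, by contrast, carries over verbatim to Theorem~\ref{thm:aux_related_to_optimal_mu_linear}.
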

\begin{proof}
Recall from Remark \ref{rmk:adm} that \(H^{-1}(\Omega)\subseteq {\rm Adm}_c\), 
thus there exists \(\mu_{\rm opt}=a_{\rm opt}\mathcal L^n\in \mathscr M_{\rm opt}\).
\underline{We first show the inequality \(\leq\):} being \(a_{\rm opt}\) an optimal 
conductivity, we have from Theorem \ref{thm:existence_mu_opt}, precisely from \eqref{eq:sup_inf_exchange}, that for \(k\in \N\) big enough it holds
\[
\begin{split}
&\inf_{u\in\mathscr D(\Omega)}\int_{\Omega} \frac{|\nabla u(x)|^2}{2}a_{\rm opt}(x)\,\rmd x-\langle f,u\rangle -\int_{\Omega} c\big(a_{\rm opt}(x)\big)\,\rmd x\\
=&\inf_{u\in \mathscr D(\Omega)}\sup_{\mu\in \mathscr K^+_k(\ove \Omega)}\mathcal F_f(\mu, u)-\mathcal C(\mu)\\
\leq &\inf_{u\in \mathscr D(\Omega)}\sup_{\mu
\in \mathscr M(\ove\Omega)}\mathcal F_f(\mu, u)-\mathcal C(\mu) \\
=&\inf_{u\in \mathscr D(\Omega)}\sup_{a\in
L^1(\Omega)}\int_{\Omega}\frac{|\nabla u(x)|^2}{2}\,\rmd x-\langle f,u\rangle -\int_{\Omega}c\big(a(x)\big)\,\rmd x,
\end{split}
\]
where the last inequality follows from the property of \(\mathcal C\) of being the lower semicontinuous envelope of 
\(\mathcal C|_{L^1(\Omega)}\).
Then, we get the desired inequality by applying first Lemma \ref{lem:C_conjugate} and then Theorem \ref{thm:auxiliary_problem}.

\underline{We now show the inequality \(\geq\):} Let us introduce some
auxiliary notation. 
For every \(k\in \N\) we set
\begin{equation*}
A_k\coloneqq\{t\in \R:\, 0\leq t\leq k/\mathcal L^n(\Omega)\},\quad
\chi_k\coloneqq \chi_{A_k},\quad
c_k\coloneqq c+\chi_k=
\begin{cases}
c(t)&\text{if }t\in A_k,\\
+\infty&\text{otherwise}.
\end{cases}
\end{equation*}
Consider the sequence \((c_k^*)_{k\in \N}\) of functions
\(c_k^*\colon\R\to\overline\R\) given by
\begin{equation}
c_k^*(t)=\sup_{s\in A_k}\,s\cdot t-c_k(s),\quad 
\text{ for all }t\in \R.
\end{equation}
It is straightforward to check that 
\begin{itemize}
\item [\rm (a)] For every \(k\in \N\), the function \(c_k^*\) is a normal convex integrand.
\item [\rm (b)] \((c_k^*)_{k\in \N}\) is an increasing sequence.
\item [\rm (c)] \(c_k^*(t)\nearrow c^*(t)\) 
for every \(t\in \R\).
\item [\rm (d)] Let \(\lambda_0>0\) be as in \eqref{tzero}. Then for every \(\N\ni k\geq \lambda_0\mathcal L^n(\Omega)\) we have
\[c_k^*(t)\geq \lambda_0\, t-c(\lambda_0),\quad \text{ for every } t\in 
\R.\]
\end{itemize}
Since we are in the superlinear case, the right-hand side of the equality in \eqref{eq:sup_inf_exchange} becomes
\[
\inf_{u\in \mathscr D(\Omega)}\sup_{\mu\in \mathscr K_k^+(\overline\Omega)}\,\mathcal F_f(\mu, u)-\mathcal C(\mu)
=\inf_{u\in \mathscr D(\Omega)}\sup_{a\in L^1_k(\Omega)}\int_{\Omega}\frac{|\nabla u(x)|^2}{2}a(x)-c(a(x))\,\rmd x-\langle f, u\rangle.
\]
By noticing that \(L^\infty_{k/\mathcal{L}^n(\Omega)}(\Omega)\subseteq L^1_k(\Omega)\)
and taking \(\N\ni k\geq\lambda_0\mathcal L^n(\Omega)\)
we get the following bound
\[
\begin{split}
\sup_{a\in L^1_k(\Omega)}\int_{\Omega}\frac{|\nabla u(x)|^2}{2}a(x)-c(a(x))\,\rmd x
\geq &
\sup_{a\in L^\infty_{k/\mathcal{L}^n(\Omega)}(\Omega)}\int_{\Omega}\frac{|\nabla 
u(x)|^2}{2}a(x)-c_{k}(a(x))\,\rmd x\\
= & \int_{\Omega} c_{k}^*\left(\frac{|\nabla u(x)|^2}{2}\right)\,\rmd x
\eqqcolon \Phi_{k}(u).
\end{split}
\]
All in all, the above shows that 
\begin{equation}\label{eq:aux_inequality}
\inf_{u\in \mathscr D(\Omega)}\sup_{\mu\in \mathscr K_k^+(\overline\Omega)}\,\mathcal F_f(\mu, u)-\mathcal C(\mu)\geq \inf_{u\in H^1_0(\Omega)}\Phi_k(u)-\langle f,u\rangle.
\end{equation}
We now claim that 
\begin{equation}
\label{eq:aux_equality}
\lim_{k\to +\infty}
\inf_{u\in H^1_0(\Omega)}\Phi_k(u)-\langle f,u\rangle
=\inf_{u\in H^1_0(\Omega)}
\int_{\Omega}c^*\left(\frac{|\nabla u(x)|^2}{2}\right)\,\rmd x
-\langle f,u \rangle.
\end{equation}
Due to the assumptions on \(f\), in the next passage to the \(\Gamma\)-limit
the functional 
\(\langle f,\cdot \rangle\) is treated via continuity, thus we only need to take care about 
the sequence of functionals \(\{\Phi_k\}_{k\in \N}\).
Being \(\{\Phi_k\}_{k\in \N}\) the sequence of 
weak \(H^1_0(\Omega)\)-lower semicontinuous and increasing functionals, as
granted by the properties (a)--(c) of the sequence 
\((c_k^*)_{k\in \N}\) stated above,
its \(\Gamma\)-limit
in the weak \(H^1_0(\Omega)\)-topology coincides with its 
pointwise limit 
(see \cite[Proposition 5.4 and Remark 5.5]{DM}). 
Moreover, since the functionals \(\Phi_k\),
\(k\in \N\) are equicoercive (as the consequence of the property (d) above), it follows from \cite[Theorem 7.8]{DM} that the equality in
\eqref{eq:aux_equality} holds. 
This  together with \eqref{eq:sup_inf_exchange} and \eqref{eq:aux_inequality} concludes the proof.
\end{proof}

\begin{remark}\label{rmk:admissible_f_SL}
We remark  that the above results 
hold even under less restrictive assumptions on 
the admissible distribution \(f\in {\rm Adm}_c\). Namely, it is enough to require that 
\(f\) is continuous with respect to the convergence \(u_i\rightharpoonup u\) in \(H^{1}_0(\Omega)\) weakly 
and \(\sup_i \int_{\Omega}c^*(|\nabla u_i|^2/2)\,\rmd x\) being finite.
\end{remark}

\begin{thm}[Optimality conditions in the (SL) homogeneous case]\label{thm:opt_cond_hom}
Let \(f\in H^{-1}(\Omega)\).
Then, the following are equivalent:
the couple \((a_{\rm opt}, \bar u)\in L^1(\Omega)\times H^1_0(\Omega)\)
satisfies the conditions
(referred to as the \emph{optimality conditions})
\begin{equation}\label{eq:OC_SL}
\begin{split}
1)\quad &-{\rm div}(a_{\rm opt}
\nabla\bar u)=f \, \text{ in }\mathscr D'(\Omega);\\
2)\quad & a_{\rm opt}(x)\in \partial c^*\left(\frac{|\nabla \bar u(x)|^2}{2}\right)\,\text{ for }\mathcal L^n\text{-a.e.\ }x\in \Omega.
\end{split}
\end{equation}
if and only if 
\begin{itemize}
\item [\rm a)] \(a_{\rm opt}\) is a solution of the problem \eqref{eq:MOPc};
\item [\rm b)] \(\bar u\) is a  solution to the problem \eqref{eq:aux_pb_2},
\item [\rm c)] \(\nabla \bar u\) is a solution of the dual problem \eqref{eq:dual_problem_SL} with \(a=a_{\rm opt}\).
\end{itemize}
\end{thm}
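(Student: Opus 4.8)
The plan is to show that each of the two lists of conditions is equivalent to a single pair of facts: $-{\rm div}(a_{\rm opt}\nabla\bar u)=f$ in $\mathscr D'(\Omega)$, together with a pointwise Fenchel (Young) equality. Write $\varphi(s)\coloneqq c^*(s^2/2)$, so that, as in Theorem \ref{thm:auxiliary_problem}, ${\rm I}_{f,c}=\inf_{u\in H^1_0(\Omega)}\int_\Omega\varphi(\nabla u)\,\rmd x-\langle f,u\rangle$, and set $v\coloneqq a_{\rm opt}\nabla\bar u$. By Lemma \ref{lem:subdif_composition} one has $\partial\varphi(z)=z\,\partial c^*(|z|^2/2)$, so condition 2) is equivalent to $v(x)\in\partial\varphi(\nabla\bar u(x))$ for a.e.\ $x\in\Omega$, i.e.\ to the pointwise equality
\begin{equation*}
c^*\!\Big(\tfrac{|\nabla\bar u(x)|^2}{2}\Big)+c\big(a_{\rm opt}(x)\big)=\tfrac{|\nabla\bar u(x)|^2}{2}\,a_{\rm opt}(x)\qquad\text{for }\mathcal L^n\text{-a.e.\ }x\in\Omega.
\end{equation*}
Since, by Young's inequality, the integrand $\tfrac12|\nabla\bar u|^2 a_{\rm opt}-c^*(|\nabla\bar u|^2/2)-c(a_{\rm opt})$ is $\le0$ a.e., this pointwise equality is equivalent---provided the three integrals in the next display are finite, which will be checked from \eqref{eq:c}, \eqref{tzero}, $a_{\rm opt}\in L^1(\Omega)$ and $\bar u\in H^1_0(\Omega)$---to its integrated form
\begin{equation*}
\tfrac12\int_\Omega|\nabla\bar u|^2 a_{\rm opt}\,\rmd x=\int_\Omega c^*\!\Big(\tfrac{|\nabla\bar u|^2}{2}\Big)\,\rmd x+\int_\Omega c\big(a_{\rm opt}\big)\,\rmd x.\tag{$\diamond$}
\end{equation*}

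Secondly, condition 1) is equivalent to c). Indeed, if $-{\rm div}(a_{\rm opt}\nabla\bar u)=f$ then $\nabla\bar u$ is admissible in the dual problem \eqref{eq:dual_problem_SL} with $a=a_{\rm opt}$, and testing the identity $-{\rm div}\big(a_{\rm opt}(\sigma-\nabla\bar u)\big)=0$ against $\bar u$ gives $\int_\Omega a_{\rm opt}|\sigma|^2\,\rmd x\ge\int_\Omega a_{\rm opt}|\nabla\bar u|^2\,\rmd x$ for every admissible $\sigma$, so $\nabla\bar u$ is optimal there, i.e.\ c) holds; the converse is immediate since c) forces admissibility of $\nabla\bar u$. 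Moreover, by the duality for the energy recalled around \eqref{eq:dual_problem_SL}--\eqref{eq:mu_divergence} the $\sigma$-supremum in \eqref{eq:dual_problem_SL} equals $\mathcal E_f(a_{\rm opt}\mathcal L^n)$, so under 1) one gets $\mathcal E_f(a_{\rm opt}\mathcal L^n)=-\tfrac12\int_\Omega a_{\rm opt}|\nabla\bar u|^2\,\rmd x$; and testing 1) against $\bar u$ gives $\langle f,\bar u\rangle=\int_\Omega a_{\rm opt}|\nabla\bar u|^2\,\rmd x$.

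With these two reductions the rest is bookkeeping. Since $f\in H^{-1}(\Omega)\subseteq{\rm Adm}_c$, Theorem \ref{thm:existence_mu_opt} provides an optimal measure and Theorem \ref{thm:aux_related_to_optimal_mu} yields $\min_{\mu\in\mathscr M^+(\overline\Omega)}\mathcal J_{c,f}(\mu)=-{\rm I}_{f,c}$, where ${\rm I}_{f,c}$ is the infimum in \eqref{eq:aux_pb_2}. Assume 1) and 2). By Paragraph 1 we get $(\diamond)$, and by Paragraph 2 we get c) together with $\mathcal E_f(a_{\rm opt}\mathcal L^n)=-\tfrac12\int_\Omega a_{\rm opt}|\nabla\bar u|^2\,\rmd x$ and $\langle f,\bar u\rangle=\int_\Omega a_{\rm opt}|\nabla\bar u|^2\,\rmd x$; substituting these into $-\mathcal J_{c,f}(a_{\rm opt}\mathcal L^n)=\mathcal E_f(a_{\rm opt}\mathcal L^n)-\int_\Omega c(a_{\rm opt})\,\rmd x$ and using $(\diamond)$ gives $-\mathcal J_{c,f}(a_{\rm opt}\mathcal L^n)=\int_\Omega c^*(|\nabla\bar u|^2/2)\,\rmd x-\langle f,\bar u\rangle$, which is $\le{\rm I}_{f,c}$ (by the identity above) and $\ge{\rm I}_{f,c}$ (being the value at the competitor $\bar u$ of the infimum defining ${\rm I}_{f,c}$). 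Hence equality holds throughout, which is precisely b) (that $\bar u$ attains \eqref{eq:aux_pb_2}) and a) (that $\mathcal J_{c,f}(a_{\rm opt}\mathcal L^n)=-{\rm I}_{f,c}=\min_\mu\mathcal J_{c,f}$). Conversely, assume a), b), c). Then c) gives 1) and the two identities of Paragraph 2; a) gives ${\rm I}_{f,c}=-\mathcal J_{c,f}(a_{\rm opt}\mathcal L^n)=-\tfrac12\int_\Omega a_{\rm opt}|\nabla\bar u|^2\,\rmd x-\int_\Omega c(a_{\rm opt})\,\rmd x$, while b) gives ${\rm I}_{f,c}=\int_\Omega c^*(|\nabla\bar u|^2/2)\,\rmd x-\langle f,\bar u\rangle=\int_\Omega c^*(|\nabla\bar u|^2/2)\,\rmd x-\int_\Omega a_{\rm opt}|\nabla\bar u|^2\,\rmd x$; equating the two expressions for ${\rm I}_{f,c}$ yields exactly $(\diamond)$, whence 2) by Paragraph 1.

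The scheme itself is elementary; the point requiring care is the analytic substrate. Chiefly: checking finiteness of $\int_\Omega a_{\rm opt}|\nabla\bar u|^2\,\rmd x$ and $\int_\Omega c^*(|\nabla\bar u|^2/2)\,\rmd x$ (so that $(\diamond)$ is genuinely equivalent to the pointwise Fenchel equality and the manipulations above are legitimate); justifying that $\bar u\in H^1_0(\Omega)$ is an admissible test function in the distributional identities $-{\rm div}(a_{\rm opt}\nabla\bar u)=f$ and $-{\rm div}(a_{\rm opt}(\sigma-\nabla\bar u))=0$---a density issue for the weighted Sobolev space, since $a_{\rm opt}\in L^1(\Omega)$ need not be bounded; and checking that Theorem \ref{thm:dual_problem_general} (together with Theorem \ref{thm:Ekeland_integral_conjugate}) applies so that the subdifferential of $u\mapsto\int_\Omega\varphi(\nabla u)\,\rmd x$ is the pointwise subdifferential. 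These are, however, exactly the technical points already settled in the proofs of Theorems \ref{thm:auxiliary_problem} and \ref{thm:aux_related_to_optimal_mu}.
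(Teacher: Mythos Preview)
Your scheme is essentially the paper's: both directions boil down to the PDE 1) together with the integrated Fenchel identity $(\diamond)$, and the bridge to a)--c) is Theorem~\ref{thm:aux_related_to_optimal_mu}. The paper packages it slightly differently---it first reads off b) from 1)+2) via the optimal-pair characterization of Theorem~\ref{thm:dual_problem_general} (this is what Theorem~\ref{thm:auxiliary_problem} encodes), and then closes a chain of inequalities---but the content is the same.

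There is, however, a real misattribution in your last paragraph. The step you single out---testing $-{\rm div}(a_{\rm opt}\nabla\bar u)=f$ against $\bar u$ to get $\langle f,\bar u\rangle=\int_\Omega a_{\rm opt}|\nabla\bar u|^2\,\rmd x$---is \emph{not} settled in the proofs of Theorems~\ref{thm:auxiliary_problem} or~\ref{thm:aux_related_to_optimal_mu}; it is precisely the technical core of the paper's proof of \emph{this} theorem. Those earlier proofs only give the relaxation from $\mathscr D(\Omega)$ to $H^1_0(\Omega)$ and the min/max identity; they never touch the weighted space $L^2_{a_{\rm opt}}$. The paper carries out the missing argument here: it picks a recovery sequence $(u_i)\subset\mathscr D(\Omega)$ for $\bar u$ with $\int c^*(|\nabla u_i|^2/2)\to\int c^*(|\nabla\bar u|^2/2)$, observes that $(\nabla u_i)$ is bounded in $L^2_{a_{\rm opt}}$ by Young's inequality, extracts a weak $L^2_{a_{\rm opt}}$ limit $w$, and then---using the \emph{unweighted} weak $H^1_0$ convergence together with $a_{\rm opt}\nabla\bar u\in L^2(\Omega)$ coming from 2) and Theorem~\ref{thm:dual_problem_general}---computes $\langle f,\bar u\rangle=\lim_i\int a_{\rm opt}\nabla\bar u\cdot\nabla u_i=\int a_{\rm opt}|\nabla\bar u|^2$. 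Without this, ``testing against $\bar u$'' is purely formal, since $a_{\rm opt}\in L^1$ may be unbounded and $\mathscr D(\Omega)$ need not be dense in the relevant weighted Sobolev norm.

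A smaller slip: the equivalence ``1) $\Leftrightarrow$ c)'' is not standalone. Condition c) presupposes $\nabla\bar u\in L^2_{a_{\rm opt}}(\Omega,\R^n)$, which 1) alone does not give; you need 2) (equivalently $(\diamond)$, once the finiteness of $\int c(a_{\rm opt})$ and $\int c^*(|\nabla\bar u|^2/2)$ is in hand) to secure that integrability. So the clean decoupling you attempt in your second paragraph does not quite hold: conditions 1) and 2) already interact at the level of admissibility in the dual problem.
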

\begin{proof}
\underline{Let us prove the implication \((\Rightarrow)\)}.
The conditions 1) and 2) grant that  
\(\nabla\bar u\) is a candidate
in the dual problem \eqref{eq:dual_problem_SL} with \(a=a_{\rm opt}\),
\(\bar u\in H^1_0(\Omega)\) is a 
solution to the problem \eqref{eq:aux_pb_2} (thus condition b) is verified), and the 
vector field \(a_{\rm opt}\nabla \bar u\in L^2(\Omega)\) solves the dual problem to \eqref{eq:aux_pb_2}. 
Taking into account Theorem \ref{thm:auxiliary_problem} and Theorem \ref{thm:aux_related_to_optimal_mu},
the following chain of inequalities holds:
\begin{equation}\label{eq:OC_aux_1}
\begin{split}
&-\int\frac{|\nabla\bar u(x)|^2}{2}a_{\rm opt}(x)\,\rmd x-\int_{\Omega}c\big(a_{\rm opt}(x)\big)\,\rmd x
\leq \inf_{u\in \mathscr D(\Omega)}\mathcal F_f(a_{\rm opt},u)-\mathcal C(a_{\rm opt})\leq \sup_{\mu\in \mathscr M^+(\ove\Omega)}\mathcal J_{f,c}(\mu)\\
&={\rm I}_{f,c}=\int_{\Omega}c^*\left(\frac{|\nabla \bar{u}(x)|^2}{2}\right)\,\rmd x-\langle f, 
\bar u\rangle=\int\frac{|\nabla\bar u(x)|^2}{2} a_{\rm opt}(x)\,\rmd x-\int_{\Omega}c\big(a_{\rm opt}(x)\big)\,
\rmd x-\langle f,\bar u \rangle.
\end{split}
\end{equation}
Take now any sequence \((u_i)_{i\in \N}\subseteq \mathscr D(\Omega)\) such that 
\[u_i\rightharpoonup \bar u \text{ weakly in } H^1_0(\Omega) 
\quad \text{ and }\quad 
\int_{\Omega}c^*\left(\frac{|\nabla \bar u|^2}{2}\right)\,\rmd x
=\lim_{i\to +\infty} \int_{\Omega}c^*\left(\frac{|\nabla  u_i|^2}{2}\right)\,\rmd x,
\]
which exists due to Theorem \ref{thm:auxiliary_problem}.
Up to a non-relabeled subsequence, we also have that
\(\nabla u_i\rightharpoonup w\) weakly in \(L^2_{a_{\rm opt}}(\Omega, \R^n)\) 
for some \(w\in L^2_{a_{\rm opt}}(\Omega, \R^n)\).
Being \(a_{\rm opt}\nabla \bar u\in L^2(\Omega)\) and \(\nabla \bar u\in L^2_{a_{\rm opt}}(\Omega)\), thanks to the condition 2), we have that 
\[
\int_{\Omega} w\cdot \nabla \bar u\, a_{\rm opt}\,\rmd x
=\lim_{i\to +\infty}\int_{\Omega} \nabla u_i \cdot \nabla \bar u \,a_{\rm opt}\,\rmd x
=\int_{\Omega} \nabla \bar u\cdot \nabla \bar u\, a_{\rm opt}\,\rmd x =\int_{\Omega}|\nabla \bar u|^2\,a_{\rm opt}\,\rmd x.
\]
Notice that
\[
\langle f,\bar u \rangle 
=\lim_{i\to +\infty} \langle -{\rm div}(a_{\rm opt}\nabla\bar u),u_i \rangle 
=\lim_{i\to +\infty}\int \nabla\bar u \cdot \nabla u_i \,a_{\rm opt}\,\rmd x
=\int_{\Omega}|\nabla \bar u|^2\,a_{\rm opt}\,\rmd x,\]
which, when plugged  in \eqref{eq:OC_aux_1}, shows that  all the inequalities in \eqref{eq:OC_aux_1} 
equalities. In particular, 
 \(a_{\rm opt}\) solves \eqref{eq:mass_opt_pb} and \(\nabla \bar u\) solves the dual problem \eqref{eq:dual_problem_SL} with \(a=a_{\rm opt}\),
 thus the conditions a) and c) follow. 

\underline{We now prove the implication \((\Leftarrow)\)}. Due to the assumptions and Theorem \ref{thm:aux_related_to_optimal_mu}
we have that 
\begin{equation}\label{eq:OC_aux_2}
-\int\frac{|\nabla\bar u|^2}{2}\, a_{\rm opt}\,\rmd x
-\int_{\Omega}c\big(a_{\rm opt}(x)\big)\,\rmd x
=\inf_{u\in \mathscr D(\Omega)}\mathcal F_f(a_{\rm opt},u)-\mathcal C(a_{\rm opt})
=\int_{\Omega}c^*\left(\frac{|\nabla \bar u|^2}{2}\right)\,\rmd x-\langle f,\bar u\rangle.
\end{equation}
Take now any sequence \((u_i)_{i\in \N}\subseteq \mathscr D(\Omega)\) such that 
\[u_i\rightharpoonup \bar u \text{ weakly in } H^1_0(\Omega) 
\quad \text{ and }\quad 
\int_{\Omega}c^*\left(\frac{|\nabla \bar u|^2}{2}\right)\,\rmd x
=\lim_{i\to +\infty} \int_{\Omega}c^*\left(\frac{|\nabla  u_i|^2}{2}\right)\,\rmd x,
\]
which exists due to Theorem \ref{thm:auxiliary_problem}. 
Up to a non-relabeled subsequence, \(\nabla u_i\) 
weakly converge in \(L^2_{a_{\rm opt}}(\Omega, \R^n)\) to some \( w\).
Notice also that due to the lower semiconitnuity of the norm, we have that
\begin{equation}\label{eq:w_bound}
\int \frac{|w|^2}{2}a_{\rm opt}\,\rmd x
\leq \lim_{i\to +\infty}\int \frac{|\nabla u_i|^2}{2}a_{\rm opt}\,\rmd x
\leq \int_{\Omega}c^*\left(\frac{|\nabla \bar u|^2}{2}\right)\,
\rmd x +\int_{\Omega}c\big(a_{\rm opt}(x)\big)\,\rmd x.
\end{equation}
where the last inequality follows from the conjugate duality and the way in which the sequence \((u_i)_{i\in 
\N}\) has been chosen.
Further, being \(\nabla \bar u\) a solution to \eqref{eq:dual_problem_SL}, we have that 
\[
\langle f,\bar u\rangle =\lim_{i\to +\infty}\langle -{\rm div}(a_{\rm opt}\,\nabla \bar u),u_i\rangle=\lim_{i\to 
+\infty} \int \nabla\bar u \cdot \nabla u_i\, a_{\rm opt}\,\rmd x
=\int \nabla\bar u\cdot w\, a_{\rm opt}\,\rmd x.
\]
Combining the latter equality with \eqref{eq:w_bound} and
\eqref{eq:OC_aux_2} we get that 
\[
\begin{split}
-\int \frac{|\nabla\bar u|^2}{2}a_{\rm opt}\,\rmd x -\int_{\Omega}c\big(a_{\rm opt}(x)\big)\,\rmd x
= &\int_{\Omega}c^*\left(\frac{|\nabla \bar u|^2}{2}\right)\,\rmd x -\int \nabla \bar u\cdot  w\, a_{\rm opt}\,\rmd x\\
\geq&\int \frac{| w|^2}{2}a_{\rm opt}\,\rmd x-\int_{\Omega}c\big(a_{\rm opt}(x)\big)\,\rmd x -\int \nabla \bar u\cdot 
 w\,a_{\rm opt}\,\rmd x.
\end{split}
\]
This implies that \(\nabla\bar u- w=0\) holds \(a_{\rm opt}\mathcal L^n\)-a.e.. Now it follows from \eqref{eq:OC_aux_2} and b) that
\[
\int \frac{|\nabla\bar u|^2}{2}a_{\rm opt}\,\rmd x
=\int_{\Omega}c^*\left(\frac{|\nabla \bar u|^2}{2}\right)\,\rmd x 
+\int_{\Omega}c\big(a_{\rm opt}(x)\big)\,\rmd x,
\]
proving 2). The point 1) is trivially satisfied by the assumption c).
\end{proof}

\subsection{Heterogeneous cost function}\label{sec:Heterogeneous_SL}
We now assume that the cost function \(c\) is dependent also on the domain variable \(x\), namely we consider:

\[
\textbf{Heterogeneous cost:}\qquad c\colon \ove\Omega\times \R\to [0,+\infty]\,\text{ satisfying }{\rm (P1)}-{\rm (P3)}.
\]

As before, we shall look at the auxiliary variational problem,
which in the general setting of the \(x\)-dependent cost reads as:
\begin{equation}\label{eq:aux_problem}
\inf_{u\in H^1_0(\Omega)}\int_{\Omega} c^*\left(x,\frac{|\nabla u(x)|^2}{2}\right)\,\rmd x
-\langle f,u\rangle.
\end{equation}
We will (since no ambiguity occurs) denote by \({\rm I}_{f,c}\) the value of the infimum in \eqref{eq:aux_problem}.
\smallskip

Recall that, due to assumptions on \(c\) (and taking into account \cite[Example 1.24]{DM}), the functional 
\(\Phi\colon H^1_0(\Omega)\to \ove\R\) given by 
\begin{equation}\label{eq:aux_Phi}
u\mapsto \Phi(u)\coloneqq \int_{\Omega}
c^*\left(x,\frac{|\nabla u|^2}{2}\right)\,\rmd x,
\end{equation}
which appears in the minimization problem \eqref{eq:aux_problem} is lower semicontinuous in the weak \(H^1_0(\Omega)\)-topology. Nevertheless, due to the dependence on the domain variable of the integrand \(c^*\), it might happen that the equality
\begin{equation}\label{eq:Lavrentiev}
\text{the lower semicontinuous envelope of }\Phi|_{\mathscr D(\Omega)}\text{ in the weak } H^1_0(\Omega)\text{-topology}\,=\,\Phi,
\end{equation}
is \emph{not} satisfied and thus also the equality \eqref{eq:aux_problem_relaxed} might not be true. This is related to the occurrence of the so called \emph{Lavrentiev phenomenon}. Since the scope of this paper is beyond analysing the necessary and sufficient conditions on \(c\) in order to avoid such a phenomenon, 
in what follows we will say that 
\begin{center}
Lavrentiev phenomenon \emph{does not} occur for the  functional \(\Phi\) if \eqref{eq:Lavrentiev}
is satisfied. 
\end{center}
For a thorough  discussion about the occurrence of Lavrentiev phenomenon see \cite{MarTreu} and  the references therein. This said, the corresponding theorems to Theorem \ref{thm:auxiliary_problem},  Theorem \ref{thm:aux_related_to_optimal_mu} 
and Theorem \ref{thm:opt_cond_hom} in the heterogeneous case can be stated, respectively, as follows. Their proofs follow verbatim the proofs in the homogeneous case, since the dependence on \(x\) does not have any influence to the arguments used (notice that assumptions on \(c\) with respect to the variable \(x\) are such that all the requirements in the theorems stated in Section \ref{sec:preliminaries} are met; see also Remark \ref{rmk:P1-P3}). 

\begin{thm}[The auxiliary problem in the (SL) heterogeneous case]\label{thm:auxiliary_problem_het}
Let \(f\in H^{-1}(\Omega)\) and assume that 
Lavrentiev phenomenon does not occur for the functional \(\Phi\) defined in \eqref{eq:aux_Phi}.
Then it holds that
\begin{equation}\label{eq:aux_problem_relaxed_het}
\inf_{u\in \mathscr D(\Omega)}\int_{\Omega} c^*\left(x,\frac{|\nabla u(x)|^2}{2}\right)\,\rmd x
-\langle f,u\rangle={\rm I}_{f,c}.
\end{equation}
Moreover, the auxiliary problem in \eqref{eq:aux_problem}
admits a solution and any solution \(\bar u\in H^1_0(\Omega)\) satisfies 
\begin{equation}\label{eq:optimal_pairs_het}
-{\rm div}(\bar v)=f, \quad 
\bar v(x) \in \partial c^*\left(x,\frac{|\nabla \bar u(x)|^2}{2}\right)\nabla \bar u(x)\,\text{ for }\mathcal L^n\text{-a.e.\ }x\in \Omega. 
\end{equation}
\end{thm}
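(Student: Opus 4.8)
The plan is to follow exactly the same strategy as in the proof of Theorem \ref{thm:auxiliary_problem}, observing that every ingredient used there has a counterpart that is available in the heterogeneous setting under the two standing hypotheses (the assumptions (P1)--(P3) on \(c\) and the absence of the Lavrentiev phenomenon for \(\Phi\)). First I would establish the equality \eqref{eq:aux_problem_relaxed_het}: the right-hand side \({\rm I}_{f,c}\) is by definition the infimum of \(\Phi(u)-\langle f,u\rangle\) over \(u\in H^1_0(\Omega)\), while the left-hand side is the same infimum restricted to \(u\in\mathscr D(\Omega)\). Since \(f\in H^{-1}(\Omega)\) makes \(u\mapsto\langle f,u\rangle\) continuous in the strong (hence sequentially weak, after noting it is linear and bounded) \(H^1_0(\Omega)\)-topology, and since the hypothesis that Lavrentiev phenomenon does not occur means precisely that the weak lower semicontinuous envelope of \(\Phi|_{\mathscr D(\Omega)}\) equals \(\Phi\), the two infima coincide; this is the point where the heterogeneous case genuinely differs from the homogeneous one, where \eqref{eq:aux_problem_relaxed} came for free from \cite[Chapter X, Proposition 2.6]{Ekeland}.

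Next I would prove existence of a minimizer \(\bar u\in H^1_0(\Omega)\) for \eqref{eq:aux_problem}. The growth assumption (P2) on \(c\) yields, exactly as in property (d) used in the proof of Theorem \ref{thm:aux_related_to_optimal_mu}, a lower bound of the form \(c^*(x,t)\ge\lambda_0 t-c(x,a_0(x))\) with \(c(\cdot,a_0(\cdot))\in L^1(\Omega)\); integrating against \(t=|\nabla u(x)|^2/2\) gives \(\Phi(u)\ge\tfrac{\lambda_0}{2}\|\nabla u\|_{L^2}^2-\|c(\cdot,a_0(\cdot))\|_{L^1}\), so the functional \(u\mapsto\Phi(u)-\langle f,u\rangle\) is coercive on \(H^1_0(\Omega)\) for the weak topology. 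Combined with the weak lower semicontinuity of \(\Phi\) (which holds by \cite[Example 1.24]{DM}, as recalled just before the statement) and of \(-\langle f,\cdot\rangle\), the direct method of the calculus of variations produces a minimizer \(\bar u\).

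Then I would derive the optimality relations \eqref{eq:optimal_pairs_het} by duality. Applying Theorem \ref{thm:dual_problem_general} with \(U=H^1_0(\Omega)\), \(V=L^2(\Omega,\R^n)\), \(A=\nabla\), \(\Psi(v)=\int_\Omega c^*(x,|v(x)|^2/2)\,\rmd x\) and \(\psi(u)=-\langle f,u\rangle\) (the required interior-point condition being supplied by (P2), which makes \(\Psi\) finite and, with the growth of \(c^*\), continuous at an admissible point), the dual problem reads \(\sup\{\Psi^*(-v):v\in L^2(\Omega,\R^n),\ -{\rm div}(v)=f\}\), and an optimal pair \((\bar u,\bar v)\) is characterised by \(-{\rm div}(\bar v)=f\) together with \(\bar v\in\partial\Psi(\nabla\bar u)\). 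By Theorem \ref{thm:Ekeland_integral_conjugate} applied to the normal convex integrand \((x,t)\mapsto c^*(x,t^2/2\text{-type})\)---more precisely to \(\varphi(x,s)=c^*(x,|s|^2/2)\), whose conjugacy behaves pointwise---the membership \(\bar v\in\partial\Psi(\nabla\bar u)\) localises to \(\bar v(x)\in\partial_z\big[c^*(x,|z|^2/2)\big]\big|_{z=\nabla\bar u(x)}\) for \(\mathcal L^n\)-a.e.\ \(x\); finally, since \(c^*(x,\cdot)\) is non-decreasing (a consequence of (P1), cf.\ the homogeneous case) and \(|\cdot|^2\) is continuous and convex, Lemma \ref{lem:subdif_composition} gives \(\partial_z[c^*(x,|z|^2/2)]=z\,\partial c^*(x,|z|^2/2)\), which is precisely \eqref{eq:optimal_pairs_het}.

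The main obstacle, and the only essentially new point relative to the homogeneous case, is controlling the \(x\)-dependence so that the relaxation identity \eqref{eq:aux_problem_relaxed_het} holds: this is exactly what the no-Lavrentiev hypothesis is imposed to guarantee, so under that assumption the proof proceeds verbatim as in Theorem \ref{thm:auxiliary_problem}, with (P2) replacing \eqref{tzero} for coercivity and with the normality/upper semicontinuity assumptions (P3) ensuring that all the convex-analysis tools from Section \ref{sec:preliminaries} (in particular Theorem \ref{thm:Ekeland_integral_conjugate} and Lemma \ref{lem:subdif_composition}) apply pointwise in \(x\).
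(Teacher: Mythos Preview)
Your proposal is correct and follows essentially the same route as the paper. The paper does not give a separate proof of Theorem \ref{thm:auxiliary_problem_het}; it simply states that the argument for Theorem \ref{thm:auxiliary_problem} goes through verbatim once (P1)--(P3) guarantee the preliminary tools apply and the no-Lavrentiev hypothesis replaces the appeal to \cite[Chapter X, Proposition 2.6]{Ekeland} for the relaxation identity \eqref{eq:aux_problem_relaxed_het}---exactly the structure you outline.
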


\begin{thm}\label{thm:aux_related_to_optimal_mu_het}
 Let \(f\in H^{-1}(\Omega)\) and assume that Lavrentiev phenomenon does not occur for the functional \(\Phi\) in \eqref{eq:aux_Phi}.
 Then for every optimal conductivity 
\(a_{\rm opt}\in \mathscr M_{\rm opt}\) it holds that 
\[
\inf_{u\mathscr D(\Omega)}\int_{\Omega} \frac{|\nabla u(x)|^2}{2}a_{\rm opt}(x)\,\rmd x-\langle f,u\rangle -\int_{\Omega} c\big(x,a_{\rm opt}(x)\big)\,\rmd x=\min_{u\in H^1_0(\Omega)}\int_{\Omega} c^*\left(x,\frac{|\nabla u(x)|^2}{2}\right)\,\rmd x
-\langle f,u\rangle.
\]
\end{thm}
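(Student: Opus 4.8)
The plan is to transcribe, line by line, the proof of Theorem~\ref{thm:aux_related_to_optimal_mu}, checking that the dependence of the cost function on the spatial variable $x$ is harmless, and replacing each invocation of \eqref{tzero} and of Theorem~\ref{thm:auxiliary_problem} by their heterogeneous counterparts, namely (P2) and Theorem~\ref{thm:auxiliary_problem_het}. First I would fix, via Theorem~\ref{thm:existence_mu_opt} together with Remark~\ref{rmk:adm} (recall $f\in H^{-1}(\Omega)\subseteq{\rm Adm}_c$), an optimal conductivity $\mu_{\rm opt}=a_{\rm opt}\mathcal L^n\in\mathscr M_{\rm opt}$, so that $\mathcal C(\mu_{\rm opt})=\int_\Omega c(x,a_{\rm opt}(x))\,\rmd x$, and then establish the two inequalities separately.

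For the inequality ``$\le$'' I would start from identity \eqref{eq:sup_inf_exchange} of Theorem~\ref{thm:existence_mu_opt}, applied to $\mu_{\rm opt}$ with $k$ large. Enlarging the inner supremum from $\mathscr K^+_k(\overline\Omega)$ to $\mathscr M(\overline\Omega)$ and using that, by (P3), $\mathcal C$ is the weak$^*$ lower semicontinuous envelope of $\mathcal C|_{L^1(\Omega)}$ (Theorem~\ref{thm:semicontinuity_of _integral_funct_on _measures}, see Remark~\ref{rmk:P1-P3}), the right-hand side becomes $\inf_{u\in\mathscr D(\Omega)}\big(\sup_{a\in L^1(\Omega)}\int_\Omega\big(\tfrac{|\nabla u|^2}{2}a-c(x,a)\big)\,\rmd x-\langle f,u\rangle\big)$. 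By Lemma~\ref{lem:C_conjugate}, already stated for heterogeneous $c$, the inner supremum equals $\int_\Omega c^*\big(x,\tfrac{|\nabla u|^2}{2}\big)\,\rmd x$, and the resulting quantity equals ${\rm I}_{f,c}$ precisely by \eqref{eq:aux_problem_relaxed_het} in Theorem~\ref{thm:auxiliary_problem_het}; this is the one step where the no-Lavrentiev hypothesis enters.

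For the inequality ``$\ge$'' I would reproduce the truncation and $\Gamma$-convergence argument. Put $A_k=[0,k/\mathcal L^n(\Omega)]$, $c_k(x,\cdot)\coloneqq c(x,\cdot)+\chi_{A_k}$, and let $c_k^*(x,\cdot)$ be its conjugate; then $(c_k^*)_k$ is an increasing sequence of normal convex integrands with $c_k^*(x,t)\nearrow c^*(x,t)$ pointwise, and, inserting $s=a_0(x)\in A_k$ in the definition of $c_k^*$ for $k$ so large that $\|a_0\|_\infty\le k/\mathcal L^n(\Omega)$ (here $a_0,\lambda_0$ are as in (P2)), one gets $c_k^*(x,t)\ge a_0(x)t-c(x,a_0(x))\ge\lambda_0 t-c(x,a_0(x))$. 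Hence the functionals $\Phi_k(u)\coloneqq\int_\Omega c_k^*\big(x,\tfrac{|\nabla u|^2}{2}\big)\,\rmd x$ are weakly $H^1_0(\Omega)$-lower semicontinuous, increasing, and equicoercive; restricting the supremum in \eqref{eq:sup_inf_exchange} to $L^\infty_{k/\mathcal L^n(\Omega)}(\Omega)$ and applying Theorem~\ref{thm:Ekeland_integral_conjugate} to the bounded-domain normal integrand $c_k$ (whose effective domain meets $L^\infty$ by (P2)) yields $\inf_{u\in\mathscr D(\Omega)}\sup_{\mu\in\mathscr K^+_k(\overline\Omega)}\big(\mathcal F_f(\mu,u)-\mathcal C(\mu)\big)\ge\inf_{u\in H^1_0(\Omega)}\big(\Phi_k(u)-\langle f,u\rangle\big)$. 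Since the $\Phi_k$ are increasing and weakly lower semicontinuous, their $\Gamma$-limit coincides with their pointwise limit $\int_\Omega c^*(x,|\nabla u|^2/2)\,\rmd x$ (\cite[Proposition~5.4 and Remark~5.5]{DM}), and equicoercivity allows passage to the limit in the infima (\cite[Theorem~7.8]{DM}), the term $\langle f,\cdot\rangle$ being treated by continuity; letting $k\to+\infty$ and combining with \eqref{eq:sup_inf_exchange} gives the claimed inequality.

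The only genuinely new ingredient compared with the homogeneous case is the identity between the infimum over $\mathscr D(\Omega)$ and the infimum over $H^1_0(\Omega)$ of the relaxed auxiliary functional, which is no longer automatic and must be imported from Theorem~\ref{thm:auxiliary_problem_het}; this is exactly where the assumption that Lavrentiev phenomenon does not occur is used. Everything else — measurability of the truncated integrands and of their conjugates, the lower semicontinuity of $\Phi$ and $\Phi_k$, and the equicoercivity bounds — is ensured by (P1)–(P3), so the argument is indeed a verbatim transcription of the homogeneous one, and I do not expect any serious obstacle beyond keeping careful track of the single point at which the no-Lavrentiev hypothesis is invoked.
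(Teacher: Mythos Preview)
Your proposal is correct and matches the paper's approach exactly: the paper does not provide a separate proof for Theorem~\ref{thm:aux_related_to_optimal_mu_het} but simply asserts that it follows verbatim from the homogeneous case (Theorem~\ref{thm:aux_related_to_optimal_mu}), with the assumptions (P1)--(P3) ensuring that all the preliminary results still apply. You have also correctly identified the single point where the no-Lavrentiev hypothesis is needed, namely in invoking Theorem~\ref{thm:auxiliary_problem_het} to pass from the infimum over $\mathscr D(\Omega)$ to that over $H^1_0(\Omega)$ in the ``$\le$'' direction.
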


\begin{thm}[Optimality conditions in the (SL) heterogeneous case]
Let \(f\in H^{-1}(\Omega)\) and assume that 
Lavrentiev phenomenon does not occur for the functional \(\Phi\) in \eqref{eq:aux_Phi}.
Then, the following are equivalent:
the couple \((a_{\rm opt}, \bar u)\in L^1(\Omega)\times H^1_0(\Omega)\)
satisfies the conditions
(referred to as the \emph{optimality conditions})
\begin{equation}\label{eq:OC_SL_het}
\begin{split}
1)\quad &-{\rm div}(a_{\rm opt}
\nabla\bar u)=f \, \text{ in }\mathscr D'(\Omega);\\
2)\quad & a_{\rm opt}(x)\in \partial c^*\left(x,\frac{|\nabla \bar u(x)|^2}{2}\right)\,\text{ for }\mathcal L^n\text{-a.e.\ }x\in \Omega.
\end{split}
\end{equation}
if and only if 
\begin{itemize}
\item [\rm a)] \(a_{\rm opt}\) is a solution of the problem \eqref{eq:mass_opt_pb};
\item [\rm b)] \(\bar u\) is a  solution to the problem  \eqref{eq:aux_problem},
\item [\rm c)] \(\nabla \bar u\) is a solution of the dual problem \eqref{eq:dual_problem_SL} with \(a=a_{\rm opt}\).
\end{itemize}
\end{thm}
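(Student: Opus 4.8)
The plan is to reproduce, almost verbatim, the proof of Theorem~\ref{thm:opt_cond_hom}, replacing Theorem~\ref{thm:auxiliary_problem} and Theorem~\ref{thm:aux_related_to_optimal_mu} by their heterogeneous counterparts Theorem~\ref{thm:auxiliary_problem_het} and Theorem~\ref{thm:aux_related_to_optimal_mu_het}. The only structural issue to keep track of is that these heterogeneous statements — in particular the relaxation identity \eqref{eq:aux_problem_relaxed_het}, the existence of a minimizer $\bar u$ of \eqref{eq:aux_problem}, and the availability of a recovery sequence $(u_i)_{i\in\N}\subseteq\mathscr D(\Omega)$ with $u_i\rightharpoonup\bar u$ weakly in $H^1_0(\Omega)$ and $\int_\Omega c^*(x,|\nabla u_i|^2/2)\,\rmd x\to\int_\Omega c^*(x,|\nabla\bar u|^2/2)\,\rmd x$ — all rest on the assumption that the Lavrentiev phenomenon does not occur for the functional $\Phi$ of \eqref{eq:aux_Phi}. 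Once this is granted, the explicit dependence of $c^*$ on $x$ plays no role anywhere, exactly as observed before the statement.

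For the implication $(\Rightarrow)$, conditions 1) and 2) say that $\nabla\bar u$ is an admissible competitor for the dual problem \eqref{eq:dual_problem_SL} with $a=a_{\rm opt}$, that $\bar u$ solves \eqref{eq:aux_problem} (so b) holds), and — in view of \eqref{eq:optimal_pairs_het} and the composition rule of Lemma~\ref{lem:subdif_composition}, which gives $\partial\varphi(z)=z\,\partial c^*(x,|z|^2/2)$ for $\varphi(z)=c^*(x,|z|^2/2)$ — that $a_{\rm opt}\nabla\bar u$ solves the problem dual to \eqref{eq:aux_problem}. The heterogeneous version of the chain of inequalities \eqref{eq:OC_aux_1}, assembled from Theorem~\ref{thm:auxiliary_problem_het} and Theorem~\ref{thm:aux_related_to_optimal_mu_het} and using that condition 2) is the integrated Fenchel equality $\int_\Omega c^*(x,|\nabla\bar u|^2/2)\,\rmd x=\int_\Omega\big(\tfrac{|\nabla\bar u|^2}{2}a_{\rm opt}-c(x,a_{\rm opt})\big)\,\rmd x$, then reduces everything to checking $\langle f,\bar u\rangle=\int_\Omega|\nabla\bar u|^2 a_{\rm opt}\,\rmd x$. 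This identity is obtained exactly as in the proof of Theorem~\ref{thm:opt_cond_hom}: extract a weak $L^2_{a_{\rm opt}}$-limit $w$ of $\nabla u_i$, use condition 2) to get $\int_\Omega w\cdot\nabla\bar u\,a_{\rm opt}\,\rmd x=\int_\Omega|\nabla\bar u|^2 a_{\rm opt}\,\rmd x$, and test $-{\rm div}(a_{\rm opt}\nabla\bar u)=f$ against $u_i$ to get $\langle f,\bar u\rangle=\int_\Omega\nabla\bar u\cdot w\,a_{\rm opt}\,\rmd x$. Equality throughout the chain then yields a) and c).

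For the implication $(\Leftarrow)$, Theorem~\ref{thm:aux_related_to_optimal_mu_het} gives, under a), b) and the no-Lavrentiev assumption, the identity $-\int_\Omega\tfrac{|\nabla\bar u|^2}{2}a_{\rm opt}\,\rmd x-\int_\Omega c(x,a_{\rm opt})\,\rmd x=\int_\Omega c^*(x,|\nabla\bar u|^2/2)\,\rmd x-\langle f,\bar u\rangle$. Along the recovery sequence, pass to a weak $L^2_{a_{\rm opt}}$-limit $w$ of $\nabla u_i$; lower semicontinuity of the $L^2_{a_{\rm opt}}$-norm combined with the pointwise Young inequality $ts\le c^*(x,t)+c(x,s)$ bounds $\int_\Omega\tfrac{|w|^2}{2}a_{\rm opt}\,\rmd x\le\int_\Omega c^*(x,|\nabla\bar u|^2/2)\,\rmd x+\int_\Omega c(x,a_{\rm opt})\,\rmd x$, while c) (namely $\nabla\bar u$ solving \eqref{eq:dual_problem_SL}) gives $\langle f,\bar u\rangle=\int_\Omega\nabla\bar u\cdot w\,a_{\rm opt}\,\rmd x$ by testing against $u_i$. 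Substituting into the displayed identity forces $\nabla\bar u=w$ $a_{\rm opt}\mathcal L^n$-a.e., and the identity then collapses to $\int_\Omega\tfrac{|\nabla\bar u|^2}{2}a_{\rm opt}\,\rmd x=\int_\Omega c^*(x,|\nabla\bar u|^2/2)\,\rmd x+\int_\Omega c(x,a_{\rm opt})\,\rmd x$. Since the Young inequality holds as a pointwise integrand inequality, equality of the integrals forces equality of the integrands $\mathcal L^n$-a.e., i.e.\ $\tfrac{|\nabla\bar u(x)|^2}{2}a_{\rm opt}(x)=c^*(x,|\nabla\bar u(x)|^2/2)+c(x,a_{\rm opt}(x))$ for $\mathcal L^n$-a.e.\ $x$; this is precisely the Fenchel equality expressing 2), and 1) is a restatement of c).

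The only genuinely new point compared with the homogeneous proof is the passage between $\mathscr D(\Omega)$ and $H^1_0(\Omega)$, and it is dealt with by the no-Lavrentiev hypothesis through Theorem~\ref{thm:auxiliary_problem_het}: that hypothesis is what makes the recovery sequence available, and the recovery sequence is the engine of both implications. A secondary, routine ingredient is the localization step in $(\Leftarrow)$ — deducing the a.e.\ vanishing of the nonnegative function $x\mapsto c^*(x,|\nabla\bar u(x)|^2/2)+c(x,a_{\rm opt}(x))-\tfrac{|\nabla\bar u(x)|^2}{2}a_{\rm opt}(x)$ from the vanishing of its integral — which is standard since $c$ is a normal convex integrand and the functions $c^*(\cdot,|\nabla\bar u(\cdot)|^2/2)$ and $c(\cdot,a_{\rm opt}(\cdot))$ are measurable.
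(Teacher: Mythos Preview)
Your proposal is correct and follows precisely the approach of the paper: the paper states that the proof ``follows verbatim the proof in the homogeneous case'' (Theorem~\ref{thm:opt_cond_hom}), and you carry this out in detail, correctly flagging that the no-Lavrentiev hypothesis is what supplies the recovery sequence in $\mathscr D(\Omega)$ on which both implications hinge. One minor point of exposition: in $(\Leftarrow)$ the displayed identity $-\int_\Omega\tfrac{|\nabla\bar u|^2}{2}a_{\rm opt}\,\rmd x-\int_\Omega c(x,a_{\rm opt})\,\rmd x=\int_\Omega c^*(x,|\nabla\bar u|^2/2)\,\rmd x-\langle f,\bar u\rangle$ already uses c) on the left-hand side (to identify $-\int_\Omega\tfrac{|\nabla\bar u|^2}{2}a_{\rm opt}\,\rmd x$ with the value of the dual problem \eqref{eq:dual_problem_SL}), not only a) and b); but since you invoke c) immediately afterwards anyway, this is harmless.
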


\section{The case of linear cost function}\label{sec:linear}
In this section we provide optimality conditions for the optimal conductivities 
\(\mu_{\rm opt}\in \mathscr M_{\rm opt}\) when the cost function \(c\) has a linear growth at infinity.
More specifically,  
we consider throughout this section the following case:
\begin{equation}\tag{L}
\textbf{Linear case:}\qquad {\bf c}^{\infty}\coloneqq\sup_{x\in \ove\Omega}c^\infty(x,1)< +\infty, \quad \text{ where } c^{\infty}(x,1)\coloneqq \big(c(x,\cdot)\big)^{\infty}(1).
\end{equation}

Let us start by observing that the following 
result holds.

\begin{lemma}\label{lem:extension}
Let \(f\in {\rm Adm}_c\) and \(\mu\in {\rm Dom}(\mathcal J_{f,c})\). 
Let us denote by 
\[(-\infty, 0]\ni m\coloneqq \inf_{u\in\mathscr D(\Omega)}\int\frac{|\nabla u|^2}{2}\,\rmd \mu-\langle f, u\rangle\quad \text{ and }\quad C\coloneqq \sqrt{-2m}.\]
Then it holds that 
\begin{equation}\label{eq:stimaextension}
|\langle f,u\rangle|\le 
C\,\|\nabla u\|_{L^2_\mu(\Omega, \R^n)}\quad \text{ for all } u\in \mathscr D(\Omega).
\end{equation}
\end{lemma}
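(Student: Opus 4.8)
The plan is to exploit the scaling structure of the functional $u \mapsto \int \frac{|\nabla u|^2}{2}\,\rmd\mu - \langle f,u\rangle$. Since $\mu \in {\rm Dom}(\mathcal J_{f,c})$ we have $m = \mathcal E_f(\mu) > -\infty$, so $m \in (-\infty,0]$ (the value $0$ being attained by testing with $u \equiv 0$, hence $m \le 0$). Fix an arbitrary $u \in \mathscr D(\Omega)$. The idea is to plug the rescaled competitor $tu$, for $t \in \R$, into the defining infimum: by definition of $m$,
\[
m \le \int \frac{|\nabla (tu)|^2}{2}\,\rmd\mu - \langle f, tu\rangle = \frac{t^2}{2}\int|\nabla u|^2\,\rmd\mu - t\,\langle f,u\rangle \qquad \text{for all } t \in \R.
\]
This is a quadratic inequality in $t$ that must hold for every real $t$, which forces a relation between the coefficients.

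First I would set $a \coloneqq \int|\nabla u|^2\,\rmd\mu = \|\nabla u\|_{L^2_\mu(\Omega,\R^n)}^2 \ge 0$ and $b \coloneqq \langle f,u\rangle$, so that the inequality becomes $\frac{a}{2}t^2 - bt - m \ge 0$ for all $t \in \R$. If $a = 0$, then $-bt - m \ge 0$ for all $t$ forces $b = 0$, and the desired bound $|b| \le C\sqrt{a} = 0$ holds trivially. If $a > 0$, the parabola $t \mapsto \frac{a}{2}t^2 - bt - m$ is non-negative everywhere iff its discriminant is non-positive: $b^2 + 2am \le 0$, i.e. $b^2 \le -2am = C^2 a$ (recall $C = \sqrt{-2m}$). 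Taking square roots gives $|\langle f,u\rangle| = |b| \le C\sqrt{a} = C\,\|\nabla u\|_{L^2_\mu(\Omega,\R^n)}$, which is exactly \eqref{eq:stimaextension}. Since $u \in \mathscr D(\Omega)$ was arbitrary, the proof is complete.

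There is essentially no serious obstacle here; the only points requiring a word of care are that $m > -\infty$ (guaranteed by $\mu \in {\rm Dom}(\mathcal J_{f,c})$, since $\mathcal C(\mu) < +\infty$ forces $-\mathcal E_f(\mu) < +\infty$) so that $C$ is well-defined and finite, and the degenerate case $\|\nabla u\|_{L^2_\mu} = 0$, which is handled separately as above. The mildly delicate part is simply recognizing that the single scalar inequality obtained by varying the scaling parameter $t$ over all of $\R$ already encodes the full estimate via the discriminant condition.
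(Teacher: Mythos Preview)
Your proof is correct and follows essentially the same route as the paper: both insert the rescaled competitor $tu$ into the defining inequality for $m$ and then optimize in $t$ (you phrase the optimization as a discriminant condition and let $t$ range over all of $\R$, while the paper first passes to $|\langle f,u\rangle|$ and then optimizes over $t>0$, but these are equivalent). Your handling of the degenerate case $\|\nabla u\|_{L^2_\mu}=0$ and of the finiteness of $m$ is also correct.
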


\begin{proof}
Note that for every \(u\in \mathscr D(\Omega)\) we have that 
\[
|\langle f,u\rangle|\leq -m+\|\nabla u\|^2_{L^2_\mu(\Omega, \R^n)}.
\]
Thus, given any \(u\in \mathscr D(\Omega)\), by just plugging in the above inequality \(tu\in \mathscr D(\Omega)\), we have that 
\[
t^2\|\nabla u\|^2_{L^2_\mu(\Omega, \R^n)}-t|\langle f,u\rangle|-m\geq 0 \quad\text { for all }t>0.
\]
Then, optimizing with respect to \(t\),  we obtain precisely \eqref{eq:stimaextension}.
\end{proof}

As a consequence, we have that \(f\) admits a (unique) linear and continuous extension 
\(\hat f\) to the Banach space
\[
S^2_{0,\mu}(\Omega)\coloneqq {\rm cl}_{L^2_{\mu}(\Omega, \R^n)}\big\{\nabla u:\, u\in 
\mathscr D(\Omega)\big\}
\]
endowed with the norm \(\|\cdot\|_{L^2_{\mu}(\Omega, \R^n)}\). The extension \(\hat f\) has the property
\[\hat f[\nabla u]=\langle f, u\rangle\qquad\text{for every }u\in \mathscr D(\Omega).\] 
Thus, a standard relaxation argument gives
\begin{equation}\label{eq:relaxed_pb_of_I}
\inf_{u\in \mathscr D(\Omega)} 
\mathcal F_f(\mu,u)-\mathcal C(\mu)
=\inf_{w\in S^2_{0,\mu}(\Omega)}\int 
\frac{|w|^2}{2}\,\rmd \mu-\hat f[w]-\mathcal C(\mu).
\end{equation}

In order to state and prove the optimality conditions for the optimal conductors that in this are not necessarily  densities with respect to a Lebesgue measure (as in the case (SL)), but may contain also singular parts, in the following subsection we recall the notion of the \(\mu\)-tangential gradient and Sobolev function with respect to a measure \(\mu\in\mathscr M^+(\ove\Omega)\).

\subsection{Sobolev spaces with respect to a measure}\label{sec:Sobolev}
Let us consider a bounded open set \(\Omega\subseteq \R^n\) and a measure \(\mu\in \mathscr 
M^+(\overline\Omega)\). 
In this subsection we recall the notion of Sobolev space \(H^{1}_{0,\mu}(\Omega)\), following the 
approaches in \cite{BBS} and \cite{Zhikov}, which turn out to be equivalent (see \cite{LPR20}). We first fix 
some terminology: given a function \(u\in L^2_\mu(\Omega)\) we say that a vector field \(w\in 
L^2_\mu(\Omega,\R^n)\) is a \(\mu\)-\emph{gradient} of \(u\)
if there exists a sequence \((u_i)_i\subseteq \mathscr D(\Omega)\) such that 
\begin{equation}\label{eq:Sobolev_G}
u_i\to u\, \text{ strongly in }L^2_\mu(\Omega)\quad \text{ and }\quad \nabla u_i\to w\, 
\text{ strongly in }L^2_\mu(\Omega, \R^n).
\end{equation}
We denote the set of all \(\mu\)-gradients of \(u\) by \({\rm G}_\mu(u)\) and define
\[
H^1_{0,\mu}(\Omega)\coloneqq \big\{u\in L^2_\mu(\Omega):\, {\rm G}_\mu(u)\neq \emptyset\big\}.
\]
In particular, notice that every \(w\in {\rm G}_\mu(u)\) for \(u\in H^1_{0,\mu}(\Omega)\)
belongs to the space \(S^2_{0,\mu}(\Omega)\) defined above.
We will use this fact in the proof of Theorem \ref{thm:opt_cond_lin_hom}.

It is not difficult to check that the set \({\rm G}_\mu(u)\) is a closed and convex subset of 
\(L^2_\mu(\Omega, \R^n)\), thus it admits the element of minimal \(L^2\)-norm, denoted by \(\nabla_\mu u\).
The space \(H^1_{0,\mu}(\Omega)\) is a Banach space, when endowed with the norm 
\[
\|u\|_{H^1_{0,\mu}}\coloneqq \|u\|_{L^2_\mu(\Omega)}+\|\nabla_\mu u\|_{L^2_\mu(\Omega, \R^n)}.
\]
Notice that, thanks to Mazur's theorem, we could equivalently require the weak convergence of gradients in 
\eqref{eq:Sobolev_G} instead of the strong one. Also, when \(\mu=\mathcal L^n|_{\Omega}\), due to the 
closability of the norm \(\|u\|_{L^2(\Omega)}+\|\nabla u\|_{L^2(\Omega, \R^n)}\) on \(\mathscr D(\Omega)\),
the above definition reduces to the standard definition of the space of 
Sobolev functions on \(\Omega\) with `zero boundary values'. 
We shall thus in this case use the standard 
notation \(H^1_0(\Omega)\) and the \(\mathcal L^n\)-gradient will be denoted by \(\nabla u\) (this will 
not cause any ambiguity with the same notation of the strong gradient of smooth functions).

Given any positive constant \(M>0\), we set
\begin{equation}\label{eq:LIP}
{\rm LIP}_{0,M}(\Omega)\coloneqq {\rm cl}_{\mathscr C(\overline\Omega)} 
\{u\in \mathscr D(\Omega):\|\nabla u\|_{\infty}\leq M\}.
\end{equation}
We have that ${\rm LIP}_{0,M}(\Omega)$ coincides with the class of Lipschitz functions which vanish on $\partial\Om$ and with their Lipschitz constant bounded by $M$. In particular, \({\rm LIP}_{0,M}(\Om)\subseteq H^1_{0,\mu}(\Omega)\) for every measure $\mu$, and (see for instance \cite{Sobolev_Pekka}, \cite{LPR20}) for every (locally) Lipschitz function $u$ we have
\begin{equation}\label{eq:ug_lip}
|\nabla_\mu u|\leq {\rm lip}(u) \qquad \mu\text{-a.e.\ in }\Omega,
\end{equation}
where
$${\rm lip}(u)(x)\coloneqq\limsup_{\substack{y\to x\\y\neq x}}\frac{|u(x)-u(y)|}{|x-y|}.$$
In particular, for every locally Lipschitz function \(u\in H^1_0(\Omega)\) we have \(|\nabla u|={\rm lip}(u)\, \mathcal L^n\)-a.e.\ in \(\Omega\).

\subsection{Homogeneous cost function}
As in the superlinear case, we shall first concentrate on the homogeneous cost functions, i.e.\ 
we add the following assumption to the cost function \(c\):
\[
\textbf{Homogeneous cost:}\qquad c(x,t)=c(t)\, \text{ for all }t\in \R.
\]

Note that in this case \({\bf c}^\infty=c^\infty(1)\). Let us analyse the auxiliary problem \eqref{eq:aux_pb_2}. 

\begin{thm}[The auxiliary problem in the {\rm (L)} homogenoeus case]\label{thm:aux_problem_linear}
Let \(f\in \mathscr M(\ove\Omega)\). Then 
it holds that 
\begin{equation}\label{eq:aux_problem_relaxed_linear}
\inf_{u\in \mathscr D(\Omega)}\int_{\Omega} c^*\left(\frac{|\nabla u(x)|^2}{2}\right)\,\rmd x
-\langle f,u\rangle={\rm I}_{f,c}.
\end{equation}
Moreover, the problem \eqref{eq:aux_pb_2} admits a solution and any
solution 
\(\bar u\) belongs to the space \(\in {{\rm LIP}_{0,\mathbf c}}(\Omega)\)
with
\(\bfc\coloneqq \sqrt{2c^\infty(1)}\), and satisfies
\begin{equation}\label{eq:optimal_pairs_hom_linear}
-{\rm div}(\bar v)=f, \quad 
\bar v(x) \in \partial c^*\left(\frac{|\nabla \bar u(x)|^2}{2}\right)\nabla \bar u(x)\,\text{ for }\mathcal L^n\text{-a.e.\ }x\in \Omega. 
\end{equation}
\end{thm}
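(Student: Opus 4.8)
The plan is to follow the proof of Theorem~\ref{thm:auxiliary_problem} almost line by line, the new ingredient being a structural observation which, in the linear regime, confines the effective domain of the auxiliary functional to \(\mathrm{LIP}_{0,\mathbf{c}}(\Omega)\) — a set that is \emph{compact} in \(\mathscr{C}(\overline\Omega)\). Since \(c\) is convex with \(c^\infty(1)=\mathbf{c}^2/2<+\infty\), one has \(c(t)/t\to c^\infty(1)\) as \(t\to+\infty\), whence \(c^*(s)=\sup_{t\ge0}(st-c(t))=+\infty\) for every \(s>c^\infty(1)\); thus \(\mathrm{Dom}(c^*)\subseteq(-\infty,c^\infty(1)]\) (and \([0,c^\infty(1))\subseteq\mathrm{Dom}(c^*)\) since \(c^*(0)=-\inf c<+\infty\)). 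Consequently any \(u\in H^1_0(\Omega)\) with \(\int_\Omega c^*\big(|\nabla u|^2/2\big)\,\rmd x<+\infty\) satisfies \(|\nabla u|^2/2\le c^\infty(1)\), i.e.\ \(|\nabla u|\le\mathbf{c}\) \(\mathcal L^n\)-a.e.; being \(\mathbf{c}\)-Lipschitz and of zero trace, such a \(u\) belongs to \(\mathrm{LIP}_{0,\mathbf{c}}(\Omega)\) by the characterization recalled after \eqref{eq:LIP}. In particular the infimum \(\mathrm{I}_{f,c}\) in \eqref{eq:aux_pb_2}, a priori over \(H^1_0(\Omega)\), is unchanged when taken over \(\mathrm{LIP}_{0,\mathbf{c}}(\Omega)\).

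For existence: by Arzel\`a--Ascoli the set \(\mathrm{LIP}_{0,\mathbf{c}}(\Omega)\) is compact in \(\mathscr{C}(\overline\Omega)\) (its elements are uniformly \(\mathbf{c}\)-Lipschitz, hence equicontinuous, and uniformly bounded since they vanish on \(\partial\Omega\) and \(\Omega\) is bounded). Along a sequence \(u_i\to u\) in \(\mathscr{C}(\overline\Omega)\) with \(u_i\in\mathrm{LIP}_{0,\mathbf{c}}(\Omega)\) one has, up to a subsequence, \(\nabla u_i\rightharpoonup\nabla u\) weakly in \(L^2(\Omega,\R^n)\) (the limit being \(\nabla u\) by testing against \(\mathscr{D}(\Omega)\)), so the functional \(u\mapsto\int_\Omega c^*\big(|\nabla u|^2/2\big)\,\rmd x\) is lower semicontinuous there, the integrand \(\xi\mapsto c^*(|\xi|^2/2)\) being convex (composition of the non-decreasing convex \(c^*\) with \(|\cdot|^2/2\)); meanwhile \(u\mapsto\langle f,u\rangle\) is continuous on \(\mathscr{C}(\overline\Omega)\) because \(f\in\mathscr{M}(\overline\Omega)\). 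The Direct Method then produces a minimizer \(\bar u\in\mathrm{LIP}_{0,\mathbf{c}}(\Omega)\). For the equality \eqref{eq:aux_problem_relaxed_linear}, the inequality \(\ge\) is immediate from \(\mathscr{D}(\Omega)\subseteq H^1_0(\Omega)\); for \(\le\) one builds a recovery sequence \(u_i\in\mathscr{D}(\Omega)\) for \(\bar u\): the linear term passes to the limit by \(\mathscr{C}(\overline\Omega)\)-convergence, and for the nonlinear one first replaces \(\bar u\) by \(\lambda\bar u\) (so that \(\|\nabla(\lambda\bar u)\|_\infty\le\lambda\mathbf{c}<\mathbf{c}\) stays strictly inside \(\mathrm{Dom}(c^*)\), with \(\int_\Omega c^*(\lambda^2|\nabla\bar u|^2/2)\,\rmd x\uparrow\int_\Omega c^*(|\nabla\bar u|^2/2)\,\rmd x\) as \(\lambda\uparrow1\) by monotone convergence, using that \(c^*\) is non-decreasing and left-continuous on its domain) and then mollifies exploiting the Lipschitz regularity of \(\partial\Omega\); alternatively one invokes, as in the first part of Theorem~\ref{thm:auxiliary_problem}, the no-Lavrentiev-gap result \cite[Chapter~X, Proposition~2.6]{Ekeland} applied to \(\varphi(z):=c^*(|z|^2/2)\) on the admissible set \(\{|z|\le\mathbf{c}\}\).

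Finally, the optimality relation \eqref{eq:optimal_pairs_hom_linear} is obtained exactly as in Theorem~\ref{thm:auxiliary_problem}. The dual of \eqref{eq:aux_pb_2}, through Theorem~\ref{thm:dual_problem_general} applied to \(\varphi(z)=c^*(|z|^2/2)\), is a maximization over vector fields \(v\) with \(-\mathrm{div}(v)=f\), and optimal pairs \((\bar u,\bar v)\) satisfy \(-\mathrm{div}(\bar v)=f\) together with \(\bar v\in\partial\mathrm{I}_\varphi(\nabla\bar u)\); identifying \(\mathrm{I}_\varphi^*(v)=\int_\Omega\varphi^*(v)\,\rmd x\) via Theorem~\ref{thm:Ekeland_integral_conjugate} upgrades this to \(\bar v(x)\in\partial\varphi(\nabla\bar u(x))\) for \(\mathcal L^n\)-a.e.\ \(x\); and since \(c^*\) is non-decreasing and \(|\cdot|^2\) is continuous convex, Lemma~\ref{lem:subdif_composition} gives \(\partial\varphi(z)=z\,\partial c^*(|z|^2/2)\), which with \(z=\nabla\bar u(x)\) is precisely \eqref{eq:optimal_pairs_hom_linear}.

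The step I expect to be the main obstacle is this last one, the duality. Because \(\mathrm{Dom}(c^*)\) is now bounded, the functional \(\mathrm{I}_\varphi\) is continuous at no point of \(L^2(\Omega,\R^n)\), so the constraint qualification of Theorem~\ref{thm:dual_problem_general} cannot be applied naively; one has to set up the duality on a function space where continuity is available (e.g.\ reading \(\nabla\) into \(L^\infty(\Omega,\R^n)\), where \(\mathrm{I}_\varphi\) is continuous at the origin with \(\mathrm{I}_\varphi(0)=c^*(0)|\Omega|<+\infty\)) and then argue separately that the resulting dual optimizer \(\bar v=\partial c^*(|\nabla\bar u|^2/2)\nabla\bar u\) is a genuine integrable vector field against which \(f\) can be tested — the delicate point being that \(\partial c^*\) may blow up near the edge of its domain, so that the a.e.\ bound \(|\nabla\bar u|\le\mathbf{c}\) alone does not bound \(\bar v\) and some extra argument (truncation, or coercivity of the dual problem) is needed.
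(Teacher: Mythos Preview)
Your argument is correct and matches the paper's approach, which is even terser: it observes that $c^*(s)=+\infty$ for $s>c^\infty(1)$ (via the bound $c(t)\le c^\infty(1)(t-t_0)$), hence finite-energy competitors are forced into $\mathrm{LIP}_{0,\mathbf c}(\Omega)$, and then declares that ``the proof follows along the same lines as in the (SL) case'' with no further detail. Your Arzel\`a--Ascoli existence argument and explicit recovery-sequence construction (scaling by $\lambda<1$ then mollifying) go beyond what the paper writes out, but are in the same spirit. Regarding your final concern about the constraint qualification in Theorem~\ref{thm:dual_problem_general} when $\mathrm{Dom}(c^*)$ is bounded: the paper does not address this either---it simply inherits the duality step from the superlinear proof verbatim---so while your caution is justified and your suggested fix (setting up duality with $\nabla$ mapping into $L^\infty(\Omega,\R^n)$, where $\mathrm I_\varphi$ is continuous at the origin) is the natural one, you are already being more careful here than the source.
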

\begin{proof}
The proof  follows along the same lines as in the (SL) case, by just noticing the following:
since \(c(t)\leq c^\infty(1)(t-t_0)\) for all \(t\geq 0\)
and any \(t_0\in {\rm Dom}(c)\) by assumptions,
we
have that \(c^*(t)\geq \bigchi_{\{s:\,s\leq c^\infty(1)\}}(t)+c^\infty(1)t_0\). Thus
every \(u\in H^1_0(\Omega)\) with finite energy \(\int_{\Omega}c^*\big(|\nabla u|^2/2\big)\,\rmd x\) satisfies 
\(|\nabla u(x)|^2\leq 2 c^\infty(1)\) for \(\mathcal L^n\)-a.e.\ \(x\in \Omega\).
Thus, recalling the assumption on \(f\) and the zero boundary condition, any minimizer \(\bar u\) indeed belongs to the space \({\rm LIP}_{0,\bf c}(\Omega)\).
\end{proof}

\begin{thm}\label{thm:aux_related_to_optimal_mu_linear}
 Let \(f\in \mathscr M(\ove\Omega)\). Then \(f\in {\rm Adm}_c\) and for every optimal conductivity 
\(\mu_{\rm opt}\in \mathscr M_{\rm opt}\) it holds that 
\begin{equation}\label{eq:statement}
\inf_{u\in\mathscr D(\Omega)}\int_{\Omega} \frac{|\nabla u(x)|^2}\,\rmd\mu_{\rm opt}-\langle f,u\rangle -\mathcal C(\mu_{\rm opt})=\min_{u\in {\rm LIP_{0,\bf c}}(\Omega)}\int_{\Omega} c^*\left(\frac{|\nabla u(x)|^2}{2}\right)\,\rmd x
-\langle f,u\rangle.
\end{equation}
\end{thm}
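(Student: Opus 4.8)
The plan is to follow the strategy already used in the superlinear case (Theorem~\ref{thm:aux_related_to_optimal_mu}), adapting it to the presence of singular parts and to the fact that here the competitors live in the Sobolev space $H^1_{0,\mu_{\rm opt}}(\Omega)$ rather than in $L^1(\Omega)$. The identity~\eqref{eq:statement} will be proved by establishing the two inequalities separately.

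For the inequality ``$\le$'', I would start from~\eqref{eq:sup_inf_exchange} in Theorem~\ref{thm:existence_mu_opt}: for $k$ large enough the left-hand side of~\eqref{eq:statement} equals $\inf_{u\in\mathscr D(\Omega)}\sup_{\mu\in\mathscr K^+_k(\ove\Omega)}\mathcal F_f(\mu,u)-\mathcal C(\mu)$, and enlarging the inner supremum to all of $\mathscr M^+(\ove\Omega)$ only increases it. Since $\mathcal C$ is the $w^*$-lower semicontinuous envelope of $\mathcal C|_{L^1(\Omega)}$ (property (P3); see Remark~\ref{rmk:P1-P3}), the supremum over measures coincides with the supremum over densities $a\in L^1(\Omega)$, which by Lemma~\ref{lem:C_conjugate} (computing $\mathcal C^*$ via Theorem~\ref{thm:Ekeland_integral_conjugate}) evaluates to $\int_\Omega c^*\!\big(|\nabla u|^2/2\big)\,\rmd x$. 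One then takes the infimum over $u\in\mathscr D(\Omega)$ and invokes Theorem~\ref{thm:aux_problem_linear}, in particular~\eqref{eq:aux_problem_relaxed_linear}, to identify this with ${\rm I}_{f,c}=\min_{u\in{\rm LIP}_{0,\bfc}(\Omega)}\int_\Omega c^*\!\big(|\nabla u|^2/2\big)\,\rmd x-\langle f,u\rangle$. First, though, I must check that $f\in{\rm Adm}_c$: by property (P2) the density $a_0$ with $a_0\ge\lambda_0$ gives $-\mathcal E_f(a_0\mathcal L^n)$ finite (since $f\in\mathscr M(\ove\Omega)\subseteq H^{-1}(\Omega)$ would fail in general for $n\ge2$, so the argument must instead use that $a_0\mathcal L^n$ is uniformly elliptic and $f$ has finite total variation, so $\mathcal F_f(a_0\mathcal L^n,\cdot)$ is bounded below on $\mathscr D(\Omega)$), hence $\mathcal J_{f,c}(a_0\mathcal L^n)<+\infty$.

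For the reverse inequality ``$\ge$'' I would argue as in the superlinear case but replace the $\Gamma$-convergence/truncation step. Fix $\mu_{\rm opt}=a_{\rm opt}\mathcal L^n+\mu_{\rm opt}^s\in\mathscr M_{\rm opt}$. Using the relaxed form~\eqref{eq:relaxed_pb_of_I}, the left-hand side of~\eqref{eq:statement} is $\inf_{w\in S^2_{0,\mu_{\rm opt}}(\Omega)}\int\frac{|w|^2}{2}\,\rmd\mu_{\rm opt}-\hat f[w]-\mathcal C(\mu_{\rm opt})$, whose value is attained at some $\bar w$ (the supremum in~\eqref{eq:dual_problem_SL} being achieved, since $\mu_{\rm opt}\in{\rm Dom}(\mathcal J_{f,c})$). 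For any $u\in{\rm LIP}_{0,\bfc}(\Omega)$ one has $\nabla u\in S^2_{0,\mu_{\rm opt}}(\Omega)$, and using the pointwise Young/Fenchel inequality $\frac{|\nabla u|^2}{2}a_{\rm opt}\ge \frac{|\nabla u|^2}{2}a_{\rm opt}$ cleverly split as $c^*(|\nabla u|^2/2)$-type bounds: more precisely, on the absolutely continuous part $\frac{|\nabla u(x)|^2}{2}a_{\rm opt}(x)\le c^*\!\big(|\nabla u(x)|^2/2\big)+c(a_{\rm opt}(x))$, and on the singular part, since $|\nabla u|^2\le 2c^\infty(1)=2\mathbf{c}^2/2$... (here one uses $\|\nabla u\|_\infty\le\mathbf c$), so $\frac{|\nabla u|^2}{2}\le c^\infty(1)=c^\infty(1)\cdot 1$, matching the singular contribution $c^\infty(1)\,\mu_{\rm opt}^s(\ove\Omega)$ to $\mathcal C$. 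Summing these, $\int\frac{|\nabla u|^2}{2}\,\rmd\mu_{\rm opt}-\langle f,u\rangle\le\int_\Omega c^*\!\big(|\nabla u|^2/2\big)\,\rmd x-\langle f,u\rangle+\mathcal C(\mu_{\rm opt})$; but this is the wrong direction. Instead one should test the \emph{optimal} $\bar w$ against the optimal $\bar u$ of the auxiliary problem and exploit equality in Young's inequality along the optimal dual vector field, exactly as in the chain~\eqref{eq:OC_aux_1}: choose $(u_i)\subseteq\mathscr D(\Omega)$ with $\nabla u_i\rightharpoonup \nabla_{\mu_{\rm opt}}\bar u$ in $L^2_{\mu_{\rm opt}}$ realizing ${\rm I}_{f,c}$, pass to the limit in $\langle f,u_i\rangle=\hat f[\nabla u_i]$ using the continuous extension $\hat f$ from Lemma~\ref{lem:extension}, and conclude by lower semicontinuity of the $L^2_{\mu_{\rm opt}}$-norm.

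The main obstacle I anticipate is the handling of the singular part $\mu_{\rm opt}^s$: in the superlinear case this issue was absent, and here one must relate the relaxed functional on $S^2_{0,\mu_{\rm opt}}(\Omega)$ (equivalently, on $H^1_{0,\mu_{\rm opt}}(\Omega)$ via $\nabla_{\mu_{\rm opt}}$) to the \emph{Lebesgue}-based auxiliary problem~\eqref{eq:aux_pb_2}, bridging the gap via the constraint $\|\nabla u\|_\infty\le\mathbf c=\sqrt{2c^\infty(1)}$ which, through Theorem~\ref{thm:aux_problem_linear}, confines any minimizer $\bar u$ to ${\rm LIP}_{0,\mathbf c}(\Omega)$ and ensures $|\nabla_{\mu_{\rm opt}}\bar u|\le{\rm lip}(\bar u)\le\mathbf c$ $\mu_{\rm opt}$-a.e.\ by~\eqref{eq:ug_lip}. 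The delicate point is then the pointwise Fenchel identity: the recession bound $c(t)\le c^\infty(1)(t-t_0)$ gives $c^*\ge\bigchi_{\{s\le c^\infty(1)\}}+c^\infty(1)t_0$, so $\frac{|\nabla_{\mu_{\rm opt}}\bar u|^2}{2}\le c^\infty(1)$ is exactly what makes the singular term of $\int\frac{|\nabla_{\mu_{\rm opt}}\bar u|^2}{2}\,\rmd\mu_{\rm opt}$ absorbable into $\mathcal C(\mu_{\rm opt})$ with equality in the optimal configuration; verifying that one genuinely has equality (and not merely an inequality) on $\mu_{\rm opt}^s$ for the optimal pair is what will require care and is the crux of the argument, mirroring conditions~2) and~3) of Theorem~\ref{casoL}.
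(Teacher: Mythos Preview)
Your approach for the inequality ``$\ge$'' has a genuine gap. The strategy of testing the optimal $\bar w$ against the optimal $\bar u$ and hoping for \emph{equality} in the Fenchel inequality on both the absolutely continuous and singular parts is circular: the identities you would need, namely $\frac{|\nabla_{\mu_{\rm opt}}\bar u|^2}{2}a_{\rm opt}=c^*\big(|\nabla\bar u|^2/2\big)+c(a_{\rm opt})$ $a_{\rm opt}\mathcal L^n$-a.e.\ and $\frac{|\nabla_{\mu_{\rm opt}}\bar u|^2}{2}=c^\infty(1)$ $\mu^s_{\rm opt}$-a.e., are precisely conditions 2)--3) of Theorem~\ref{thm:opt_cond_lin_hom}, and in the paper they are \emph{derived from} Theorem~\ref{thm:aux_related_to_optimal_mu_linear}, not used to prove it. You correctly flag this as the ``crux'', but at this stage one only knows the Fenchel \emph{inequalities} pointwise, and without~\eqref{eq:statement} already in hand nothing forces their saturation on the support of $\mu^s_{\rm opt}$.

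Your admissibility argument is also broken: for a general $f\in\mathscr M(\overline\Omega)$ (e.g.\ a Dirac mass at an interior point in dimension $n\ge2$) the measure $a_0\mathcal L^n$ gives $\mathcal E_f(a_0\mathcal L^n)=-\infty$, since $u(x_0)$ is not controlled by $\|\nabla u\|_{L^2}$. Hence you cannot invoke~\eqref{eq:sup_inf_exchange} (which presupposes $f\in{\rm Adm}_c$) to start the ``$\le$'' argument.

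The paper does \emph{not} abandon the truncation/$\Gamma$-convergence route. It applies the min-max Theorem~\ref{thm:min_max_Sorin} directly on $\mathscr K^+_k(\overline\Omega)$ (no admissibility assumption needed) and, following the proof of Theorem~\ref{thm:aux_related_to_optimal_mu}, obtains the sandwich
\[
\inf_{u\in H^1_0(\Omega)}\Phi_k(u)-\langle f,u\rangle
\;\le\;
\sup_{\mu\in\mathscr K^+_k(\overline\Omega)}\inf_{u\in\mathscr D(\Omega)}\mathcal F_f(\mu,u)-\mathcal C(\mu)
\;\le\;
{\rm I}_{f,c}\,.
\]
The only new point when passing to the limit $k\to\infty$ is the continuity of $u\mapsto\langle f,u\rangle$ along the relevant sequences: because $c$ has linear growth, the competitors with finite energy are equi-Lipschitz and (via the zero boundary condition) equi-bounded, so Ascoli--Arzel\`a gives uniform convergence, against which any $f\in\mathscr M(\overline\Omega)$ acts continuously. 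Sending $k\to\infty$ then yields $\sup_{\mu}\big(-\mathcal J_{f,c}(\mu)\big)={\rm I}_{f,c}$, which simultaneously proves $f\in{\rm Adm}_c$ and the identity~\eqref{eq:statement}.
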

\begin{proof}
Notice that (cf. Theorem \ref{thm:min_max_Sorin}) for every \(k\in \N\) it holds that 
\[
\sup_{\mu\in \mathscr K^+_k(\ove \Omega)}\inf_{u\in \mathscr D(\Omega)}\mathcal F_f(\mu, u)-\mathcal C(\mu)
=
\inf_{u\in \mathscr D(\Omega)}\sup_{\mu\in \mathscr K^+_k(\ove \Omega)}\mathcal F_f(\mu, u)-\mathcal C(\mu)
\]
Taking into account the above and Theorem \ref{thm:aux_problem_linear}, and following the same line of proof as in the corresponding theorem in the superlinear case  (Theorem \ref{thm:aux_related_to_optimal_mu}) we show that 
\[
\inf_{u\in H^1_0(\Omega)}\Phi_k(u)-\langle f,u\rangle
\leq
\sup_{\mu\in \mathscr K^+_k(\ove \Omega)}\inf_{u\in \mathscr D(\Omega)}\mathcal F_f(\mu, u)-\mathcal C(\mu)
\leq 
\min_{u\in {\rm LIP}_{0,\bf c}(\Omega)}\int_{\Omega}c^*\left(\frac{|\nabla u|^2}{2}\right)\,\rmd x-\langle f,u\rangle.
\]
In order to send \(k\to \infty\) and conclude with the \(\Gamma\)-convergence argument, just notice that the term 
\(\langle f,\cdot\rangle\) with \(f\in \mathscr M(\ove\Omega)\) can be treated via continuity also in this case, since the functions \(u\in H^1_0(\Omega)\) of finite \(c_k^*\)-energy (with \(k\) large enough) are equi-Lipschitz and, due to zero boundary condition, also equi-bounded, thus Ascoli-Arzel\` a theorem applies.
This proves that 
\[
\sup_{\mu\in \mathscr M(\ove \Omega)}\inf_{u\in \mathscr D(\Omega)}\mathcal F_f(\mu, u)-\mathcal C(\mu)
=
\min_{u\in {\rm LIP}_{0,\bf c}}\int_{\Omega}c^*\left(\frac{|\nabla u|^2}{2}\right)\,\rmd x-\langle f,u\rangle,
\]
showing the admissibility of \(f\) as well as \eqref{eq:statement}.
\end{proof}

\begin{thm}[Optimality conditions in the (L) homogeneous case]\label{thm:opt_cond_lin_hom}
Let \(f\in \mathscr M(\overline\Omega)\).
Then, the following are equivalent:
the couple
\((\mu_{\rm opt}, \bar u)
\in \mathscr M^+(\overline\Omega)
\times {\rm LIP}_{0,\bfc}(\Omega)\) 
satisfies the system of equations 
(referred to as the \emph{optimality conditions})
\begin{equation}\label{eq:OCL}
\begin{split}
1)\quad &-{\rm div}(\mu_{\rm opt}\nabla_{\mu_{\rm opt}}\bar u)
=f\,\text{ in } \mathscr D'(\Omega);\\
2)\quad &\frac{|\nabla_{\mu_{\rm opt}}\bar u(x)|^2}{2}a_{\rm opt}(x)
=c^*\left(\frac{|\nabla\bar u(x)|^2}{2}\right)+
c(a_{\rm opt}(x))\quad\text{holds } 
a_{\rm opt}\mathcal L^n\text{-a.e.\ } x\in \Omega,
;\\
3) \quad &\frac{|\nabla_{\mu_{\rm opt}} \bar u(x)|^2}{2}= 
c^\infty(1)\,\text{ holds for }\mu^s_{\rm opt}\text{-a.e.\ }x\in \Omega;\\
4)\quad & \mu_{\rm opt}(\partial \Omega)=0.
\end{split}
\end{equation}
if and only if 
\begin{itemize}
\item [\rm a)] \(\mu_{\rm opt}\) is a solution of the problem \eqref{eq:MOPc};
\item [\rm b)] \(\bar u\) is a solution to the problem \eqref{eq:aux_pb_2}. 
\end{itemize}
\end{thm}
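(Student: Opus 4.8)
The plan is to follow the same duality/relaxation strategy used in the superlinear homogeneous case (Theorem~\ref{thm:opt_cond_hom}), but now working in the space $S^2_{0,\mu_{\rm opt}}(\Omega)$ and $H^1_{0,\mu_{\rm opt}}(\Omega)$ of Sobolev functions with respect to the measure $\mu_{\rm opt}$, since the optimal conductor may carry a singular part. The backbone is the chain of (in)equalities
\[
-\int\frac{|\nabla_{\mu_{\rm opt}}\bar u|^2}{2}\,\rmd\mu_{\rm opt}-\mathcal C(\mu_{\rm opt})
\;\le\;\inf_{u\in\mathscr D(\Omega)}\mathcal F_f(\mu_{\rm opt},u)-\mathcal C(\mu_{\rm opt})
\;\le\;\sup_{\mu\in\mathscr M^+(\ove\Omega)}\big(-\mathcal J_{f,c}(\mu)\big)
\;=\;-{\rm I}_{f,c},
\]
the last equality being exactly Theorem~\ref{thm:aux_related_to_optimal_mu_linear}, and ${\rm I}_{f,c}=\int_\Omega c^*(|\nabla\bar u|^2/2)\,\rmd x-\langle f,\bar u\rangle$ by Theorem~\ref{thm:aux_problem_linear}. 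Conditions 1)--4) are precisely what forces all these inequalities to be equalities (and conversely).

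For the implication $(\Rightarrow)$ I would first use 1), together with the extension $\hat f$ to $S^2_{0,\mu_{\rm opt}}(\Omega)$ from Lemma~\ref{lem:extension} and the relaxed formulation \eqref{eq:relaxed_pb_of_I}, to identify $\langle f,\bar u\rangle$ with $\int \nabla_{\mu_{\rm opt}}\bar u\cdot\nabla_{\mu_{\rm opt}}\bar u\,\rmd\mu_{\rm opt}=\int|\nabla_{\mu_{\rm opt}}\bar u|^2\,\rmd\mu_{\rm opt}$ — here $\bar u$ being Lipschitz (hence in $H^1_{0,\mu_{\rm opt}}(\Omega)$ by the discussion in Subsection~\ref{sec:Sobolev}) is what makes the pairing meaningful. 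Then I decompose $\mathcal C(\mu_{\rm opt})=\int c(a_{\rm opt})\,\rmd x+c^\infty(1)\mu^s_{\rm opt}(\ove\Omega)$ and split the Fenchel-type inequality $\frac{|\nabla_{\mu_{\rm opt}}\bar u|^2}{2}\le c^*(|\nabla\bar u|^2/2)+c(a_{\rm opt})$ on the a.c.\ part and $\frac{|\nabla_{\mu_{\rm opt}}\bar u|^2}{2}\le c^\infty(1)$ on the singular part (using $|\nabla\bar u|^2\le 2c^\infty(1)$ a.e., which holds since $\bar u\in{\rm LIP}_{0,\bfc}$, and property 2) of the recession function on page~\pageref{item:recession}). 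Conditions 2) and 3) say these hold with equality $a_{\rm opt}\mathcal L^n$- resp.\ $\mu^s_{\rm opt}$-a.e., while 4) guarantees no mass is lost on $\partial\Omega$ when passing between $\mathscr M^+(\ove\Omega)$ and the interior problem. Adding everything up collapses the chain to equalities, so $\mu_{\rm opt}$ solves \eqref{eq:MOPc} and $\bar u$ solves \eqref{eq:aux_pb_2}.

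For $(\Leftarrow)$ I start from a)+b) plus Theorem~\ref{thm:aux_related_to_optimal_mu_linear}, which already forces
\[
-\int\frac{|\nabla_{\mu_{\rm opt}}\bar u|^2}{2}\,\rmd\mu_{\rm opt}-\mathcal C(\mu_{\rm opt})
=\int_\Omega c^*\!\Big(\frac{|\nabla\bar u|^2}{2}\Big)\,\rmd x-\langle f,\bar u\rangle .
\]
Taking an optimizing sequence $u_i\in\mathscr D(\Omega)$ with $\nabla u_i\rightharpoonup w$ weakly in $L^2_{\mu_{\rm opt}}(\Omega,\R^n)$ (so $w\in S^2_{0,\mu_{\rm opt}}(\Omega)$, hence $w\in{\rm G}_{\mu_{\rm opt}}(\bar u)$ after checking strong $L^2_{\mu_{\rm opt}}$-convergence of $u_i\to\bar u$ via equi-Lipschitz + Ascoli--Arzelà), lower semicontinuity of the norm and the conjugate inequality give a two-sided estimate that pins $w=\nabla_{\mu_{\rm opt}}\bar u$ (the minimal-norm element), establishes 1) by passing to the limit in $-{\rm div}(\mu_{\rm opt}\nabla u_i)$, and turns the Fenchel inequality into equality $\mu_{\rm opt}$-a.e. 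Decomposing that equality along $a_{\rm opt}\mathcal L^n$ and $\mu^s_{\rm opt}$ yields 2) and 3); condition 4) follows because $\bar u$ vanishes on $\partial\Omega$ and a concentration of $\mu_{\rm opt}$ on $\partial\Omega$ would make $\mathcal C$ strictly larger without lowering $-\mathcal E_f$, contradicting optimality (this is where Theorem~\ref{thm:aux_related_to_optimal_mu_linear}'s matching of the interior sup with the full sup over $\mathscr M^+(\ove\Omega)$ is used).

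The main obstacle I expect is the careful handling of the boundary term 4) and, relatedly, the identification of the weak limit $w$ of $\nabla u_i$ with the $\mu_{\rm opt}$-tangential gradient $\nabla_{\mu_{\rm opt}}\bar u$: one must verify that $w\in{\rm G}_{\mu_{\rm opt}}(\bar u)$ and that the minimality (not merely membership) is forced by the energy balance, so that all three terms $\int|\nabla_{\mu_{\rm opt}}\bar u|^2\rmd\mu_{\rm opt}$, $\int\nabla_{\mu_{\rm opt}}\bar u\cdot w\,\rmd\mu_{\rm opt}$, $\int|w|^2\rmd\mu_{\rm opt}$ coincide. The algebra of splitting $\mathcal C$ and the Fenchel inequality across the Lebesgue decomposition is routine once the functional-analytic setup (Lemma~\ref{lem:extension}, \eqref{eq:relaxed_pb_of_I}, and the properties of $\nabla_\mu$ recalled in Subsection~\ref{sec:Sobolev}) is in place.
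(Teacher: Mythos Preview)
Your proposal is correct and follows essentially the same duality strategy as the paper's proof: approximate $\bar u$ by a sequence in $\mathscr D(\Omega)\cap{\rm LIP}_{0,\bfc}(\Omega)$, pass to a weak $L^2_{\mu_{\rm opt}}$-limit $\bar w\in{\rm G}_{\mu_{\rm opt}}(\bar u)$ of the gradients, identify $\bar w$ with $\nabla_{\mu_{\rm opt}}\bar u$ via the energy balance (exactly the obstacle you flag), and then read off the pointwise conditions from the collapsed chain of inequalities. The one noteworthy difference is your argument for condition~4) in the $(\Leftarrow)$ direction: you argue directly from optimality (boundary mass raises $\mathcal C$ by $c^\infty(1)\mu_{\rm opt}(\partial\Omega)>0$ without affecting $\mathcal E_f$, since test functions are compactly supported in $\Omega$), whereas the paper deduces~4) from the second integral identity in \eqref{eq:integral_conditions_linear} combined with the pointwise bound $|\nabla_{\mu_{\rm opt}}\bar u|^2/2\le c^\infty(1)$ obtained via Mazur's lemma from the approximating sequence---your shortcut is valid and slightly cleaner.
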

\begin{proof}
\underline{We first prove \((\Rightarrow)\)}.
Recall that \(\bar u\in {\rm LIP}_{0,\bfc}(\Omega)\subseteq H^1_{0, \mu_{\rm opt}}(\Omega)\). Taking into 
account the hypotheses 1) and 2), we have that
\(\nabla_{\mu_{\rm opt}}\bar u\) is a competitor for
the dual problem to 
\(\inf_{u\in \mathscr D(\Omega)}\mathcal F(\mu_{\rm opt},u)-\mathcal C(\mu_{\rm opt})\) given in \eqref{eq:dual_problem_SL}.
Take now any sequence \((u_i)_{i\in \N}\subseteq \mathscr D(\Omega)\) such that 
\(u_i\to \bar u\) in \(H^1_0(\Omega)\) as \(i\to +\infty\),
\[
\lim_{i\to +\infty}\int_{\Omega}c^*\left(\frac{|\nabla u_i|^2}{2}\right)\,\rmd x=\int_{\Omega}c^*\left(\frac{|\nabla \bar u|^2}{2}\right)\,\rmd x\quad \text{ and }\quad \sup_{i\in \N}|\nabla u_i|\leq \bfc
\] 
(whose existence is granted by Theorem \ref{thm:aux_problem_linear} and Theorem \ref{thm:aux_related_to_optimal_mu_linear}).
Then, up to a subsequence we have that \(u_i\to \bar u\) 
weakly in \(L^2_{\mu_{\rm opt}}(\Omega)\) and \(\nabla u_i\to \bar w\) weakly in 
\(L^2_{\mu_{\rm opt}}(\Omega, \R^n)\), for some \(\bar w\in L^2_{\mu_{\rm opt}}(\Omega, \R^n)\).
In particular, \(\bar w\in {\rm G}_{\mu_{\rm opt}}(\bar u)\).
Then due to the hypotheses 1) and 2), we get (as in the proof of Theorem \ref{thm:opt_cond_hom} above)
\(\langle f, \bar u\rangle=\int \nabla_{\mu_{\rm opt}}\bar u\cdot \bar w\,\rmd \mu_{\rm opt}.\)
Due to the lower semiconitnuity of the norm we further get 
\[
\int \frac{|\bar w|^2}{2}\,\rmd \mu_{\rm opt}
\leq \lim_{i\to +\infty}\int \frac{|\nabla u_i|^2}{2}\,\rmd \mu_{\rm opt}
\leq\lim_{i\to +\infty}\int_{\Omega}c^*\left(\frac{|\nabla u_i|^2}{2}\right)\,\rmd x
+\mathcal C(\mu_{\rm opt}) 
= \int \frac{|\nabla_{\mu_{\rm opt}}\bar u|^2}{2}\,\rmd \mu_{\rm opt},
\]
where the last equality follows from the hypotheses 3) and 4).
In particular, due to the minimality of 
\(\nabla_{\mu_{\rm opt}}\bar u\),
we deduce that \(\bar w=\nabla_{\mu_{\rm opt}}\bar u\) holds \(\mu_{\rm opt}\)-a.e.
Therefore, it follows that 
\[
\begin{split}
-\int \frac{|\nabla_{\mu_{\rm opt}} \bar u|^2}{2}\,\rmd \mu_{\rm opt}-\mathcal C(\mu_{\rm opt})
&\overset{\eqref{eq:dual_problem_SL}}{\leq}
\inf_{u\in \mathscr D(\Omega)}
\mathcal F(\mu_{\rm opt},u)-\mathcal C(\mu_{\rm opt})\\
&\leq \,{\rm I}_{f,c}\\
&\leq 
 \int c^*\left(\frac{|\nabla \bar u(x)|^2}{2}\right)\,\rmd x-\langle f,\bar u\rangle\\
&= \int \frac{|\nabla_{\mu_{\rm opt}} \bar u|^2}{2}\,\rmd  \mu_{\rm opt} - \langle f,\bar u\rangle 
-\mathcal C(\mu_{\rm opt})\\
&=-\int \frac{|\nabla_{\mu_{\rm opt}} \bar u|^2}{2}\,\rmd \mu_{\rm opt}-\mathcal C(\mu_{\rm opt}),
\end{split}
\]
which shows that all the inequalities above are indeed equalities, and thus proves a) and b).\\

\underline{Let us prove now \((\Leftarrow)\)}.
Pick a sequence 
\((u_i)_{i\in \N}\subseteq \mathscr D(\Omega)\cap {\rm LIP}_{0,\bfc}(\Omega)\) 
converging weakly in \(H^1_0(\Omega)\) to \(\bar u\) and such that
\[
\int_{\Omega}c^*\left(\frac{|\nabla u_i|^2}{2}\right)\,\rmd x\to \int_{\Omega} c^*\left(\frac{|\nabla \bar u|^2}{2}\right)\,\rmd x,\quad
\text{ as }i\to \infty.
\]
Thus, up to a (non-relabeled) subsequence we have that there 
is \(\bar w\in L^2_{\mu_{\rm opt}}(\Omega)\)
\[
u_i\rightharpoonup \bar u\, \text{ weakly in } L^2_{\mu_{\rm opt}}(\Omega)
\quad \text{ and }\quad \nabla u_i\rightharpoonup \bar w\,\text{ weakly in } L^2_{\mu_{\rm opt}}(\Omega).
\]
In particular, \(\bar w\in {\rm G}_{\mu_{\rm opt}}(\bar u)\) and, due to the lower semicontinuity of 
the norm we have that
\[
\int \frac{|\bar w|^2}{2}\,\rmd  \mu_{\rm opt}\leq \int_{\Omega}c^*\left(\frac{|\nabla \bar u|^2}{2}\right)\,
\rmd x +\mathcal C(\mu_{\rm opt}).
\]
Due to the hypotheses a), the quantity in the  \eqref{eq:dual_problem_SL} is finite; 
denote by \(\bar \sigma\) a solution for the dual problem in \eqref{eq:dual_problem_SL}.
Then, taking the above into account,  we have  that 
\(\langle f,\bar u\rangle
=\int \bar \sigma\cdot \bar w\,\rmd \mu_{\rm opt}\).
Finally, by using the equality 
\[
-\int \frac{|\bar \sigma|^2}{2}\,\rmd \mu_{\rm opt} -\mathcal C(\mu_{\rm opt})
\overset{\eqref{eq:dual_problem_SL}}{=}
\inf_{u\in \mathscr D(\Omega)}\mathcal F(\mu_{\rm opt}, u)
={\rm I}_{f,c}
=\int_{\Omega}c^*\left(\frac{|\nabla \bar u|^2}{2}\right)\,\rmd x-\langle f,\bar u\rangle,
\]
together with all the above considerations
we deduce that  \(\bar w=\bar \sigma\) holds 
\(\mu_{\rm opt}\)-a.e.\ and that 
\[
\int \frac{|\bar w|^2}{2}\,\rmd \mu_{\rm opt} -
\mathcal C(\mu_{\rm opt})
=\int_{\Omega}c^*\left(\frac{|\nabla \bar u|^2}{2}\right)\,\rmd x.
\]
Given that \(\bar u\in {\rm LIP}_{0,\bfc}(\Omega)\subseteq H^1_{0,\mu_{\rm opt}}(\Omega)\), 
its \(\mu\)-gradient \(\nabla_{\mu_{\rm opt}}\bar u\) is a competitor for the relaxed problem of 
\(\inf_{u\in \mathscr D(\Omega)}\mathcal F(\mu_{\rm opt}, u)\) given in \eqref{eq:relaxed_pb_of_I}.
Also, being \(\nabla_{\mu_{\rm opt}}\bar u\) 
element of \({\rm G}_{\mu_{\rm opt}}(\bar u)\) 
of the minimal \(L^2_{\mu_{\rm opt}}\)-norm, we have that 
\[
\begin{split}
\int \frac{|\bar \sigma|^2}{2}\,\rmd \mu_{\rm opt} -\langle f,\bar u\rangle -\mathcal C(\mu_{\rm opt})
&\geq \int \frac{|\nabla_{\mu_{\rm opt}}\bar u|^2}{2}\,\rmd \mu_{\rm opt} 
-\langle f,\bar u\rangle-\mathcal C(\mu_{\rm opt})\\
&\overset{\eqref{eq:relaxed_pb_of_I}}{\geq} \inf_{u\in \mathscr D(\Omega)}\mathcal F(\mu_{\rm opt}, u)={\rm I}_{f,c}
=\int_{\Omega}c^*\left(\frac{|\nabla \bar u|^2}{2}\right)\,\rmd x-\langle f,\bar u\rangle,
\end{split}
\]
proving that \(\bar\sigma=\nabla_{\mu_{\rm opt}}\bar u\) holds \(\mu_{\rm opt}\)-a.e.. Thus the item 
1) follows, as well as the 
validity of the condition 2), 3) and 4) in an integral form. Namely, it holds that 
\begin{equation}
\label{eq:integral_conditions_linear}
\begin{split}
&\int_{\Omega}\frac{|\nabla_{\mu_{\rm opt}}\bar u(x)|^2}{2}a_{\rm opt}(x)\,\rmd x=\int_{\Omega}c^*\left(\frac{|\nabla\bar u(x)|^2}{2}\right)\,\rmd x+\int_{\Omega}c(a_{\rm opt}(x))\,\rmd x,\\
&\int_{\Omega}\frac{|\nabla_{\mu_{\rm opt}}\bar u|^2}{2}\,\rmd \mu_{\rm opt}^s=c^\infty(1)\mu_{\rm opt}^s(\ove\Omega).  
\end{split}
\end{equation}

To get the conditions 2), 3) and 4) pointwise, 
we argue as follows: 
In order to
get 2), 
recall that \(|\nabla \bar u|={\rm lip}(\bar u)\)  (see Subsection \ref{sec:Sobolev})
holds \(\mathcal L^n\)-a.e.\ in \(\Omega\) 
(and thus \(a_{\rm opt}\mathcal L^n\)-a.e.\ in \(\Omega\)).
On the other hand, we have that \(|\nabla_{\mu_{\rm opt}}\bar u|\leq {\rm lip}(\bar u)\) holds \(\mu_{\rm opt}\)-a.e.\ 
(and thus \(a_{\rm opt}\mathcal L^n\)-a.e.\ in \(\Omega\)). Hence we deduce 
that \(|\nabla_{\mu_{\rm opt}}\bar u|\leq|\nabla \bar u|\) holds \(a_{\rm opt}\mathcal L^n\)-a.e.\ in \(\Omega\).
This property, together with the conjugate inequality gives
\[
c^*\left(\frac{|\nabla \bar u(x)|^2}{2}\right) +
c\big(a_{\rm opt}(x)\big)\geq \frac{|\nabla \bar u(x)|^2}{2}a_{\rm opt}(x)
\geq \frac{|\nabla_{\mu_{\rm opt}}\bar u(x)|^2}{2}a_{\rm opt}(x),\quad a_{\rm opt}\mathcal L^n\text{-a.e.\ }x\in \Omega,
\]
and together with the first line in \eqref{eq:integral_conditions_linear} 
provide us with the claimed pointwise version 2). 
\smallskip

The conditions 3) and 4) are due to the fact that \(\nabla_{\mu_{\rm opt}}\bar u\) is  a weak \(L^2_{\mu_{\rm opt}}\) limit  of a sequence of 
smooth gradients \(\nabla u_i\) satisfying the property \(\frac{|\nabla u_i(x)|^2}{2}\leq c^\infty(1)\) 
for all \(x\in \Omega\). Thus, up to applying Mazzur's lemma and getting the strong (and thus, up to a 
subsequence, pointwise) convergence, we have that
\(\frac{|\nabla_{\mu_{\rm opt}} \bar u(x)|^2}{2}\leq c^\infty(1)\) for \(\mu_{\rm opt}\)-a.e.\ \(x\in \Omega\). 
This together with \eqref{eq:integral_conditions_linear} 
implies that 
\(\mu_{\rm opt}(\partial \Omega)
=\mu^s_{\rm opt}(\partial \Omega)=0\) 
and that 
\(\frac{|\nabla_{\mu_{\rm opt}} \bar 
u(x)|^2}{2}=c^\infty(1)\) holds for \(\mu^s_{\rm opt}\)-a.e.\ \(x\in \Omega\), as claimed.
This concludes the proof.
\end{proof}
\subsection{Heterogeneous cost function}
In this section we consider a general \(x\)-dependent cost function \(c\), namley
\[
\textbf{Heterogeneous cost:}\qquad c\colon \ove\Omega\times \R\to [0,+\infty]\, \text{ satisfying } {\rm (P1)}-{\rm (P3)}.
\]
As already explained in the Subsection \ref{sec:Heterogeneous_SL}, when the cost function is \(x\)-dependent, Lavrentiev phenomenon may occur. Under the assumption 
of no occurrence of such phenomenon, we have 
the following corresponding results to the Theorem \ref{thm:aux_problem_linear}, Theorem \ref{thm:aux_related_to_optimal_mu_linear} and Theorem \ref{thm:opt_cond_lin_hom}, respectively. Proofs follow exactly the same line as in the homogeneous case.
\smallskip

As in the (SL) case, we first consider the auxiliary problem \eqref{eq:aux_problem}.

\begin{thm}[The auxiliary problem in the (L) heterogeneous case]
Let \(f\in \mathscr M(\ove\Omega)\) and assume that Lavrentiev phenomenon does not occur for the functional \(\Phi\) defined in \eqref{eq:aux_Phi}. Then it holds that
\begin{equation}\label{eq:aux_problem_relaxed_linear_het}
\inf_{u\in \mathscr D(\Omega)}\int_{\Omega} c^*\left(x,\frac{|\nabla u(x)|^2}{2}\right)\,\rmd x
-\langle f,u\rangle={\rm I}_{f,c}.
\end{equation}
Moreover, the auxiliary problem \eqref{eq:aux_problem} admits a solution and any solution \(\bar u\) belongs to the space
\({\rm LIP}_{0,\mathbf c}(\Omega)\)
with 
\(\bfc\coloneqq \sqrt{2{\bf c}^\infty}\), and satisfies
\begin{equation}\label{eq:bound_for_nabla_u}
|\nabla \bar u(x)|\leq \sqrt{2c^\infty(x,1)}
\quad
\text{ for } \mathcal L^n\text{-a.e.\ } x\in \Omega
\end{equation} and
\begin{equation}\label{eq:optimal_pairs_het_linear}
-{\rm div}(\bar v)=f, \quad 
\bar v(x) \in \partial c^*\left(x,\frac{|\nabla \bar u(x)|^2}{2}\right)\nabla \bar u(x)\,\text{ for }\mathcal L^n\text{-a.e.\ }x\in \Omega. 
\end{equation}
\end{thm}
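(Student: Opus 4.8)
The plan is to transcribe, with only cosmetic changes, the proof of Theorem~\ref{thm:aux_problem_linear}, which was itself obtained by combining the heterogeneous superlinear argument of Theorem~\ref{thm:auxiliary_problem_het} with the linear-growth observation that forces every competitor of finite auxiliary energy to be Lipschitz; the only place where the $x$-dependence and the linear growth genuinely interact is in checking that this observation survives, which is handled by the no-Lavrentiev assumption together with (P1)--(P3). I would organize the argument in three steps. \emph{Step 1 (the relaxation identity \eqref{eq:aux_problem_relaxed_linear_het}).} Since Lavrentiev phenomenon does not occur for $\Phi$ in \eqref{eq:aux_Phi}, the weak $H^1_0(\Omega)$-lower semicontinuous envelope of $\Phi|_{\mathscr D(\Omega)}$ coincides with $\Phi$; because $f\in\mathscr M(\ove\Omega)$ and, by Step~2 below, every $u$ of finite auxiliary energy is equi-Lipschitz with constant $\le\bfc$ and, vanishing on $\partial\Omega$, equi-bounded, the perturbation $u\mapsto-\langle f,u\rangle$ is continuous along any such sequence by Ascoli--Arzel\`a and uniform convergence. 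Hence it passes to the relaxation and \eqref{eq:aux_problem_relaxed_linear_het} follows, exactly as \eqref{eq:aux_problem_relaxed_het} did in the superlinear case.

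\emph{Step 2 (existence of a minimizer in ${\rm LIP}_{0,\bfc}(\Omega)$ and the bound \eqref{eq:bound_for_nabla_u}).} Fix $a_0\in L^\infty(\Omega)$ and $\lambda_0>0$ as in (P2). By property 2) of the recession function (Subsection~\ref{sec:FunctionalsOnMeasures}) one has $c(x,t)\le c(x,a_0(x))+c^\infty(x,1)(t-a_0(x))$ for large $t$ and a.e.\ $x$; taking Fenchel conjugates in the $t$-variable gives
\[
c^*(x,s)\ \ge\ \bigchi_{\{\sigma\le c^\infty(x,1)\}}(s)+\big(c^\infty(x,1)\,a_0(x)-c(x,a_0(x))\big),\qquad\text{a.e.\ }x\in\Omega,\ s\in\R,
\]
where the additive term is in $L^1(\Omega)$ precisely because $c(\cdot,a_0(\cdot))\in L^1(\Omega)$. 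Consequently any $u\in H^1_0(\Omega)$ with $\int_\Omega c^*(x,|\nabla u|^2/2)\,\rmd x<+\infty$ satisfies $|\nabla u(x)|^2/2\le c^\infty(x,1)\le{\bf c}^\infty$ for a.e.\ $x$, which is \eqref{eq:bound_for_nabla_u}, and in particular $u\in{\rm LIP}_{0,\bfc}(\Omega)$ with $\bfc=\sqrt{2{\bf c}^\infty}$. Since ${\rm LIP}_{0,\bfc}(\Omega)$ is compact in $\mathscr C(\ove\Omega)$ by Ascoli--Arzel\`a, $\Phi$ is weakly $H^1_0(\Omega)$-lower semicontinuous (as recalled after \eqref{eq:aux_Phi}) and $\langle f,\cdot\rangle$ is continuous on this class, the direct method yields a minimizer $\bar u\in{\rm LIP}_{0,\bfc}(\Omega)$. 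The role of (P2) here is exactly to make the selection $t_0(x)=a_0(x)$ measurable and the resulting bound integrable, which is what lets the $x$-dependent case be treated like the homogeneous one.

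\emph{Step 3 (the Euler--Lagrange inclusion \eqref{eq:optimal_pairs_het_linear}).} I would apply the duality Theorem~\ref{thm:dual_problem_general} with $A=\nabla$, $\Psi(v)\coloneqq\int_\Omega c^*(x,|v(x)|^2/2)\,\rmd x$ and $\psi\coloneqq-\langle f,\cdot\rangle$: the dual problem reads $\sup\{-\Psi^*(v):-{\rm div}(v)=f\}$ and, by \eqref{eq:optimal_pair}, an optimal pair $(\bar u,\bar v)$ satisfies $-{\rm div}(\bar v)=f$ together with $\bar v\in\partial\Psi(\nabla\bar u)$. Using Theorem~\ref{thm:Ekeland_integral_conjugate} with the normal convex integrand $\varphi(x,z)\coloneqq c^*(x,|z|^2/2)$ (normal since $c^*(x,\cdot)$ is non-decreasing, $c$ being $+\infty$ on the negative axis by (P1), and $|\cdot|^2$ is continuous), the inclusion localizes to $\bar v(x)\in\partial_z\varphi(x,\nabla\bar u(x))$ for a.e.\ $x$, and Lemma~\ref{lem:subdif_composition} with $h=c^*(x,\cdot)$ and $G=|\cdot|^2/2$ gives $\partial_z\varphi(x,z)=\partial c^*(x,|z|^2/2)\,z$. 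Taking $z=\nabla\bar u(x)$ is precisely \eqref{eq:optimal_pairs_het_linear}.

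The step I expect to be the real obstacle is the qualification hypothesis in Step~3: because $c$ has only linear growth, $c^*(x,\cdot)$ equals $+\infty$ beyond $c^\infty(x,1)$, so $\Psi$ has empty interior of effective domain in $L^2$ and is nowhere continuous there, and Theorem~\ref{thm:dual_problem_general} cannot be applied naively. As in the homogeneous linear case I would circumvent this either by restricting the primal problem to the compact class ${\rm LIP}_{0,\bfc}(\Omega)$ isolated in Step~2 — on which the relevant functionals are finite and the continuity requirement is met — or by running the truncation/$\Gamma$-convergence scheme of Theorem~\ref{thm:aux_related_to_optimal_mu_linear} with the integrands $c_k^*$ and passing to the limit, using that finite-energy competitors are equi-Lipschitz and equi-bounded so that $\langle f,\cdot\rangle$ is handled by continuity. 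Apart from this point the $x$-dependence is inert, since (P1)--(P3) are exactly the hypotheses under which Theorems~\ref{thm:Ekeland_integral_conjugate}, \ref{thm:min_max_Sorin} and \ref{thm:dual_problem_general} and Lemma~\ref{lem:subdif_composition} apply verbatim.
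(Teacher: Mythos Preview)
Your proposal is correct and follows the same route as the paper, which for this statement gives no separate proof and simply asserts that the arguments of Theorems~\ref{thm:auxiliary_problem_het} and~\ref{thm:aux_problem_linear} carry over verbatim under the no-Lavrentiev assumption. Your Steps~1--3 spell out exactly those arguments (with the pointwise bound \eqref{eq:bound_for_nabla_u} coming from $c^*(x,\cdot)=+\infty$ beyond $c^\infty(x,1)$, as in the homogeneous case), and the qualification issue you flag in Step~3 is a genuine subtlety that the paper itself glosses over; either of your proposed workarounds is adequate.
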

\begin{thm}\label{thm:aux_related_to_optimal_mu_het_lin}
Let \(f\in \mathscr M(\ove\Omega)\) and assume that Lavrentiev phenomenon does not occur for the functional \(\Phi\) defined in \eqref{eq:aux_Phi}. Then for every optimal conductivity 
\(\mu_{\rm opt}\in \mathscr M_{\rm opt}\) it holds that 
\[
\inf_{u\in \mathscr D(\Omega)}\int_{\Omega} \frac{|\nabla u(x)|^2}{2}\,\rmd\mu_{\rm opt}-\langle f,u\rangle -\mathcal C(\mu_{\rm opt})=\min_{u\in {\rm LIP}_{0,\bf c}(\Omega)}\int_{\Omega} c^*\left(x,\frac{|\nabla u(x)|^2}{2}\right)\,\rmd x
-\langle f,u\rangle.
\]
\end{thm}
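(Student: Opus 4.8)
The plan is to reproduce, with the single modification of carrying the spatial variable $x$ through every conjugate and recession integrand, the proof of its homogeneous counterpart Theorem~\ref{thm:aux_related_to_optimal_mu_linear}; the spatial dependence enters only through $c^*(x,\cdot)$ and $c^\infty(x,\cdot)$, which by (P1)--(P3) retain every property used there, while the no-Lavrentiev hypothesis on $\Phi$ is precisely what replaces the relaxation identity that held automatically in the homogeneous case. The starting point is the elementary equality, valid for any $\mu_{\rm opt}\in\mathscr M_{\rm opt}$,
\[
\inf_{u\in\mathscr D(\Omega)}\mathcal F_f(\mu_{\rm opt},u)-\mathcal C(\mu_{\rm opt})=-\mathcal J_{c,f}(\mu_{\rm opt})=\sup_{\mu\in\mathscr M^+(\ove\Omega)}\Big(\inf_{u\in\mathscr D(\Omega)}\mathcal F_f(\mu,u)-\mathcal C(\mu)\Big),
\]
so that the left-hand side of the asserted identity is intrinsic to $f$, and the theorem reduces to computing the right-hand sup-inf and showing it is finite (whence $f\in{\rm Adm}_c$ and $\mathscr M_{\rm opt}\neq\emptyset$, exactly as in the homogeneous case). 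I would then, for each $k\in\N$, pass from $\mathscr M^+(\ove\Omega)$ to the $w^*$-compact convex set $\mathscr K_k^+(\ove\Omega)$ and apply the min/max exchange of Theorem~\ref{thm:min_max_Sorin} to $L(\mu,u)=\mathcal F_f(\mu,u)-\mathcal C(\mu)$.

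For the inequality ``$\le$'' I would, for fixed $u\in\mathscr D(\Omega)$, use that $\mathcal C$ is the $w^*$-lower semicontinuous envelope of $\mathcal C|_{L^1(\Omega)}$ (granted by (P3)) together with Lemma~\ref{lem:C_conjugate} to get $\sup_{\mu\in\mathscr M^+(\ove\Omega)}\big(\tfrac12\int|\nabla u|^2\,\rmd\mu-\mathcal C(\mu)\big)=\mathcal C^*\big(\tfrac12|\nabla u|^2\big)=\int_\Omega c^*\big(x,\tfrac12|\nabla u(x)|^2\big)\,\rmd x$; taking the infimum over $u\in\mathscr D(\Omega)$ and invoking the relaxation identity \eqref{eq:aux_problem_relaxed_linear_het} (this is where the no-Lavrentiev hypothesis is used) yields ${\rm I}_{f,c}$, which by the preceding auxiliary theorem equals $\min_{u\in{\rm LIP}_{0,\mathbf c}(\Omega)}\int_\Omega c^*(x,|\nabla u|^2/2)\,\rmd x-\langle f,u\rangle$. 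For ``$\ge$'' I would truncate the cost, setting $c_k\coloneqq c+\chi_{[0,k/\mathcal L^n(\Omega)]}$ and $\Phi_k(u)\coloneqq\int_\Omega c_k^*(x,|\nabla u|^2/2)\,\rmd x$; restricting the inner supremum over $\mathscr K_k^+(\ove\Omega)$ to bounded densities and applying Theorem~\ref{thm:Ekeland_integral_conjugate} gives $\sup_{\mu\in\mathscr K_k^+(\ove\Omega)}\inf_{u}\mathcal F_f(\mu,u)-\mathcal C(\mu)\ge\inf_{u}\Phi_k(u)-\langle f,u\rangle$. One then checks, as in the homogeneous proof, that $(c_k^*)_k$ is an increasing sequence of normal convex integrands with $c_k^*(x,\cdot)\nearrow c^*(x,\cdot)$ and, by (P2), a uniform affine lower bound $c_k^*(x,t)\ge a_0(x)t-c(x,a_0(x))$ for $k$ large; hence each $\Phi_k$ is weakly lower semicontinuous (cf.\ \cite[Example~1.24]{DM}), the family is equicoercive, and by \cite[Proposition~5.4, Remark~5.5 and Theorem~7.8]{DM} its $\Gamma$-limit is the lower semicontinuous envelope of $\sup_k\Phi_k=\Phi$, i.e.\ $\Phi$ itself. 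The linear-case confinement — every function of finite $\Phi$-energy has $\|\nabla u\|_\infty\le\sqrt{2\mathbf c^\infty}$ and vanishes on $\partial\Omega$, hence lies in the $\mathscr C(\ove\Omega)$-compact set ${\rm LIP}_{0,\mathbf c}(\Omega)$ — is what allows the term $\langle f,\cdot\rangle$, with $f\in\mathscr M(\ove\Omega)$, to be treated by continuity via Ascoli--Arzel\`a along the relevant equi-Lipschitz, equi-bounded minimizing sequences, so that letting $k\to\infty$ closes the chain of inequalities and, together with the opening identity, gives the claim.

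The part I expect to require genuine care is not any individual estimate — all of them are transcriptions of the homogeneous linear argument — but the bookkeeping of the function spaces and the precise role of the two standing hypotheses: one must verify that the infima over $\mathscr D(\Omega)$, over ${\rm LIP}_{0,\mathbf c}(\Omega)$ and over $H^1_0(\Omega)$ of $u\mapsto\int_\Omega c^*(x,|\nabla u|^2/2)\,\rmd x-\langle f,u\rangle$ all coincide and all equal ${\rm I}_{f,c}$ — which is exactly what \eqref{eq:aux_problem_relaxed_linear_het} (no Lavrentiev) and the Lipschitz bound on minimizers provide — and that the $x$-dependent truncated functionals $\Phi_k$ genuinely inherit weak lower semicontinuity and equicoercivity from $c_k^*$, which is where (P2) and (P3), redundant in the homogeneous setting, do their work. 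Once these two points are in place the argument is word for word that of Theorem~\ref{thm:aux_related_to_optimal_mu_linear}.
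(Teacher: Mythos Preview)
Your proposal is correct and follows exactly the approach the paper itself indicates: it simply states that the proof of Theorem~\ref{thm:aux_related_to_optimal_mu_het_lin} ``follows exactly the same line as in the homogeneous case,'' i.e.\ the argument of Theorem~\ref{thm:aux_related_to_optimal_mu_linear} (which in turn imports the truncation-and-$\Gamma$-convergence machinery from Theorem~\ref{thm:aux_related_to_optimal_mu}). You have accurately identified the two places where the heterogeneous hypotheses intervene --- the no-Lavrentiev assumption for the relaxation identity \eqref{eq:aux_problem_relaxed_linear_het}, and (P2)--(P3) for the lower semicontinuity and equicoercivity of the truncated functionals $\Phi_k$ --- and the Ascoli--Arzel\`a treatment of $\langle f,\cdot\rangle$ is precisely the paper's device.
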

\begin{thm}
Let \(f\in \mathscr M(\overline\Omega)\) and assume that Lavrentiev phenomenon does not occur for the functional \(\Phi\) defined in \eqref{eq:aux_Phi}.
Then, the following are equivalent:
the couple
\((\mu_{\rm opt}, \bar u)
\in \mathscr M^+(\overline\Omega)
\times {\rm LIP}_{0,\bfc}(\Omega)\) 
satisfies the system of equations 
(referred to as the \emph{optimality conditions})
\begin{equation}\label{eq:OCL_het}
\begin{split}
1)\quad &-{\rm div}(\mu_{\rm opt}\nabla_{\mu_{\rm opt}}\bar u)
=f\,\text{ in } \mathscr D'(\Omega);\\
2)\quad &\frac{|\nabla_{\mu_{\rm opt}}\bar u(x)|^2}{2}a_{\rm opt}(x)
=c^*\left(x,\frac{|\nabla\bar u(x)|^2}{2}\right)+
c(x, a_{\rm opt}(x))\quad\text{holds } 
a_{\rm opt}\mathcal L^n\text{-a.e.\ } x\in \Omega,
;\\
3) \quad &\frac{|\nabla_{\mu_{\rm opt}} \bar u(x)|^2}{2}= 
c^\infty(x, 1)\,\text{ holds for }\mu^s_{\rm opt}\text{-a.e.\ }x\in \Omega;\\
4)\quad & \mu_{\rm opt}(\partial \Omega)=0.
\end{split}
\end{equation}
if and only if 
\begin{itemize}
\item [\rm a)] \(\mu_{\rm opt}\) is a solution of the problem \eqref{eq:mass_opt_pb};
\item [\rm b)] \(\bar u\) is a solution to the problem  \eqref{eq:aux_problem}. 
\end{itemize}
\end{thm}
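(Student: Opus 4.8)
The plan is to reproduce, essentially verbatim, the proof of Theorem~\ref{thm:opt_cond_lin_hom}, replacing $c^*(\cdot)$, $c(\cdot)$ and the constant $c^\infty(1)$ by their $x$-dependent versions $c^*(x,\cdot)$, $c(x,\cdot)$ and $c^\infty(x,1)$, and invoking the heterogeneous auxiliary results of this subsection --- the auxiliary-problem theorem stated above (in particular \eqref{eq:aux_problem_relaxed_linear_het} and \eqref{eq:bound_for_nabla_u}) and Theorem~\ref{thm:aux_related_to_optimal_mu_het_lin} --- in place of their homogeneous counterparts. The assumption that Lavrentiev phenomenon does not occur for $\Phi$ is exactly what makes the relaxation identity \eqref{eq:aux_problem_relaxed_linear_het} hold and guarantees that $\bar u$ admits recovery sequences $(u_i)_i\subseteq\mathscr D(\Omega)$ with $\int_\Omega c^*(x,|\nabla u_i|^2/2)\,\rmd x\to\int_\Omega c^*(x,|\nabla\bar u|^2/2)\,\rmd x$. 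Since $c^*(x,s)=+\infty$ whenever $s>c^\infty(x,1)$ (as in the proof of the auxiliary-problem theorem, from $c(x,t)\le c^\infty(x,1)(t-t_0)$ for $t\ge 0$ and suitable $t_0$), such $u_i$ automatically satisfy $|\nabla u_i(x)|^2/2\le c^\infty(x,1)\le{\bf c}^\infty$ for $\mathcal L^n$-a.e.\ $x$, hence $(u_i)_i\subseteq{\rm LIP}_{0,\bfc}(\Omega)$; with this in hand the $x$-dependence plays no further role in the global min/max and duality part of the argument.

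For $(\Rightarrow)$, assume the optimality conditions. Conditions 1) and 2) make $\nabla_{\mu_{\rm opt}}\bar u$ an admissible competitor for the dual problem \eqref{eq:dual_problem_SL} with $\mu=\mu_{\rm opt}$. Taking a recovery sequence $(u_i)_i$ as above and passing, along a subsequence, to a weak limit $\bar w\in{\rm G}_{\mu_{\rm opt}}(\bar u)$ of $\nabla u_i$ in $L^2_{\mu_{\rm opt}}(\Omega,\R^n)$, conditions 1)--2) give $\langle f,\bar u\rangle=\int\nabla_{\mu_{\rm opt}}\bar u\cdot\bar w\,\rmd\mu_{\rm opt}$ (the linear term passes to the limit by Ascoli--Arzel\`a, the $u_i$ being equi-Lipschitz and equi-bounded), while 3)--4) and lower semicontinuity of the $L^2_{\mu_{\rm opt}}$-norm give $\int\frac{|\bar w|^2}{2}\,\rmd\mu_{\rm opt}\le\int\frac{|\nabla_{\mu_{\rm opt}}\bar u|^2}{2}\,\rmd\mu_{\rm opt}$; minimality of $\nabla_{\mu_{\rm opt}}\bar u$ in ${\rm G}_{\mu_{\rm opt}}(\bar u)$ then forces $\bar w=\nabla_{\mu_{\rm opt}}\bar u$ $\mu_{\rm opt}$-a.e. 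Substituting these identities into the standard chain of inequalities relating the dual value $-\int\frac{|\nabla_{\mu_{\rm opt}}\bar u|^2}{2}\,\rmd\mu_{\rm opt}-\mathcal C(\mu_{\rm opt})$, the energy $\inf_{u\in\mathscr D(\Omega)}\mathcal F_f(\mu_{\rm opt},u)-\mathcal C(\mu_{\rm opt})$, the quantity ${\rm I}_{f,c}$ (by Theorem~\ref{thm:aux_related_to_optimal_mu_het_lin}) and $\int_\Omega c^*(x,|\nabla\bar u|^2/2)\,\rmd x-\langle f,\bar u\rangle$ --- exactly as in the proof of Theorem~\ref{thm:opt_cond_lin_hom}, the closing equality using 2)--4) and $\langle f,\bar u\rangle=\int|\nabla_{\mu_{\rm opt}}\bar u|^2\,\rmd\mu_{\rm opt}$ --- collapses it to a string of equalities, which is a) and b).

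For $(\Leftarrow)$, assume a) and b). Take a recovery sequence $(u_i)_i\subseteq\mathscr D(\Omega)\cap{\rm LIP}_{0,\bfc}(\Omega)$, extract $\bar w\in{\rm G}_{\mu_{\rm opt}}(\bar u)$ as a weak $L^2_{\mu_{\rm opt}}$-limit of $\nabla u_i$, and let $\bar\sigma$ be a solution of \eqref{eq:dual_problem_SL}. The duality equality, the relaxed formula \eqref{eq:relaxed_pb_of_I} (recall $\nabla_{\mu_{\rm opt}}\bar u\in S^2_{0,\mu_{\rm opt}}(\Omega)$), lower semicontinuity of the norm and minimality of $\nabla_{\mu_{\rm opt}}\bar u$ in ${\rm G}_{\mu_{\rm opt}}(\bar u)$ together force $\bar w=\bar\sigma=\nabla_{\mu_{\rm opt}}\bar u$ $\mu_{\rm opt}$-a.e.; this is condition 1), and it also yields the integral identities
\[
\int_\Omega \frac{|\nabla_{\mu_{\rm opt}}\bar u|^2}{2}\,a_{\rm opt}\,\rmd x
=\int_\Omega c^*\!\left(x,\frac{|\nabla\bar u|^2}{2}\right)\rmd x+\int_\Omega c\big(x,a_{\rm opt}\big)\,\rmd x,
\]
\[
\int \frac{|\nabla_{\mu_{\rm opt}}\bar u|^2}{2}\,\rmd\mu^s_{\rm opt}=\int c^\infty(x,1)\,\rmd\mu^s_{\rm opt}(x).
\]
Condition 2) in pointwise form follows exactly as in the homogeneous case: $|\nabla_{\mu_{\rm opt}}\bar u|\le{\rm lip}(\bar u)=|\nabla\bar u|$ holds $a_{\rm opt}\mathcal L^n$-a.e.\ by \eqref{eq:ug_lip}, so Young's inequality combined with the first identity squeezes the integrand from both sides.

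The step I expect to be the main obstacle is upgrading conditions 3) and 4) from the integral identity to their pointwise form, because the bound now lives against $\mu^s_{\rm opt}$ whereas the control on the recovery gradients is only $\mathcal L^n$-a.e. I would argue as follows: from $|\nabla u_i(x)|^2/2\le c^\infty(x,1)$ $\mathcal L^n$-a.e., Mazur's lemma produces smooth functions $w_j$ (convex combinations of the $u_i$, hence vanishing near $\partial\Omega$) that keep the same $\mathcal L^n$-a.e.\ bound --- the sublevel set $\{v:|v|^2/2\le c^\infty(x,1)\}$ being a ball --- with $\nabla w_j\to\nabla_{\mu_{\rm opt}}\bar u$ strongly, hence (along a subsequence) $\mu_{\rm opt}$-a.e., in $L^2_{\mu_{\rm opt}}(\Omega,\R^n)$. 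Since each $\nabla w_j$ is continuous and $c^\infty(\cdot,1)$ is upper semicontinuous by (P3), the bound $|\nabla w_j(x)|^2/2\le c^\infty(x,1)$ holds at \emph{every} $x$; passing to the $\mu_{\rm opt}$-a.e.\ limit gives $|\nabla_{\mu_{\rm opt}}\bar u(x)|^2/2\le c^\infty(x,1)$ for $\mu_{\rm opt}$-a.e.\ $x$, and also $\nabla_{\mu_{\rm opt}}\bar u=0$ $\mu_{\rm opt}$-a.e.\ on $\partial\Omega$ since each $\nabla w_j$ vanishes there. Splitting $\mu^s_{\rm opt}$ over $\Omega$ and $\partial\Omega$ in the second integral identity and using $c^\infty(x,1)\ge\alpha>0$ (property (P1)) then forces $\mu^s_{\rm opt}(\partial\Omega)=0$ --- whence $\mu_{\rm opt}(\partial\Omega)=0$, $\partial\Omega$ being $\mathcal L^n$-negligible, i.e.\ condition 4) --- and the pointwise equality $|\nabla_{\mu_{\rm opt}}\bar u(x)|^2/2=c^\infty(x,1)$ for $\mu^s_{\rm opt}$-a.e.\ $x$, i.e.\ condition 3). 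This passage from an $\mathcal L^n$-a.e.\ bound to a $\mu^s_{\rm opt}$-a.e.\ statement, carried out through continuity of the smooth gradients and upper semicontinuity of $c^\infty(\cdot,1)$, is the only point where the $x$-dependence genuinely affects the argument; in the homogeneous case $c^\infty(1)$ is constant and the bound is immediate.
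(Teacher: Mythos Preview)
Your proposal is correct and matches the paper's approach exactly: the paper gives no separate proof for this theorem, stating only that ``Proofs follow exactly the same line as in the homogeneous case,'' so reproducing the argument of Theorem~\ref{thm:opt_cond_lin_hom} with $c(x,\cdot)$, $c^*(x,\cdot)$, $c^\infty(x,1)$ in place of their homogeneous counterparts and invoking the heterogeneous auxiliary results is precisely what is intended. Your additional care in passing from the $\mathcal L^n$-a.e.\ bound $|\nabla w_j(x)|^2/2\le c^\infty(x,1)$ to a bound valid everywhere (and hence $\mu^s_{\rm opt}$-a.e.) via continuity of $\nabla w_j$ and the upper semicontinuity of $c^\infty(\cdot,1)$ from (P3) is a correct and genuinely needed step that the paper glosses over; this is indeed the only place where the $x$-dependence requires an extra argument, and your treatment of it is sound.
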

\section{Examples and variants of the problem}\label{sec:Examples}

In this section, we provide some concrete examples of interest. 

\begin{example}\label{ex:c_quadratic}
Consider the function \(c(t)\coloneqq t^2/2\) for \(t\ge0\), and set \(c(t)=+\infty\) elsewhere in \(\R\). This is a superlinear case {\rm (SL)}, with \(c^\infty(1)\equiv+\infty\) and
$$c^*(s)=\begin{cases}
s^2/2&\text{if }s\ge0\\
0&\text{if }s<0.
\end{cases}$$
In this case the auxiliary variational problem becomes
$$\min\left\{\int_\Om\Big(\frac{|\nabla u|^4}{8}-f\,u\Big)\,\rmd x\ :\ u\in W^{1,4}_0(\Om)\right\}$$
and its unique solution $\bar u$ is determined by the nonlinear PDE
$$\begin{cases}
-\Delta_4u=2f\\
u\in W^{1,4}_0(\Om).
\end{cases}$$
Then the optimal conductivity coefficient $a_{\rm opt}$ is in $L^2(\Om)$ and is given by
$$a_{\rm opt}=|\nabla\bar u|^2/2\,,$$
and the coupling between $\bar u$ and $a_{\rm opt}$ is through the PDE
$$-{\rm div}(a_{\rm opt}\nabla\bar u)=f.$$
Note that in this case the right-hand side $f$ can be taken in $W^{-1,4/3}(\Om)$, which allows Dirac masses as soon as $n\le3$.
For instance, if $\Om$ is the unit ball of $\R^d$ and $f\equiv1$, we obtain
$$\bar u(x)=\frac34\Big(\frac2n\Big)^{1/3}\big(1-|x|^{4/3}\big),$$
which gives
$$a_{\rm opt}(x)=\frac12\Big(\frac2n\Big)^{2/3}|x|^{2/3}.$$
Taking, with the same $\Om$, $f=\delta_0$ (with $n\le3$) gives
$$\bar u(x)=\frac{3\cdot2^{1/3}}{4-n}\big(1-|x|^{(4-n)/3}\big),$$
and
$$a_{\rm opt}(x)=2^{-1/3}|x|^{2(1-n)/3}.$$
\end{example}

\begin{example}
Consider the cost function \(c(t)\coloneqq t+\frac{1}{t}\) for \(t>0\), and set \(c(t)=+\infty\) elsewhere in \(\R\). This corresponds to a case when both the costs of materials with large and small conductivities are high. This situation falls in the case {\rm (L)} of linear growth, and we have \(c^{\infty}(1)=1\). An easy calculation gives the conjugate function
$$c^*(s)=\begin{cases}
-2\sqrt{1-s}&\text{if }s\le1\\
+\infty&\text{otherwise.}
\end{cases}$$
Therefore the auxiliary variational problems becomes
$$\min\left\{\int_\Om-2\sqrt{1-\frac{|\nabla u|^2}{2}}\,\rmd x-\langle f,u\rangle:\ u\in {\rm LIP}_{0,\sqrt2}(\Om)\right\},$$
and it has a unique solution $\bar u$. Then we can recover the optimal measure $\mu_{\rm opt}=a_{\rm opt}
\mathcal L^n+\mu^s_{\rm opt}$ by Theorem \ref{casoL}:
\begin{equation}\label{recov}
a_{\rm opt}=\Big(1-\frac{|\nabla\bar u|^2}{2}\Big)^{-1/2}\qquad\mathcal L^n\text{-a.e. on }\{x\in\Omega:\, a_{\rm opt}(x)>0\}.
\end{equation}
Concerning the singular part $\mu^s_{\rm opt}$, we have
$$|\nabla_{\mu_{\rm opt}}\bar u|=\sqrt2\qquad\mu^s\text{-a.e.}$$
and the coupling between $\bar u$ and $\mu_{\rm opt}$ is through the PDE
$$-{\rm div}_{\mu_{\rm opt}}(\nabla_{\mu_{\rm opt}}\bar u)=f.$$
Note that, since $c(0)=+\infty$, we have $a_{\rm opt}(x)>0$ for a.e. $x\in\Om$, and by \eqref{recov} this implies $a_{\rm opt}(x)\ge1$ a.e. on $\Om$.

\end{example}

\begin{example}[Comparison with \cite{BB2001}]
Let us consider the homogeneous cost function \(c(t)=\frac{1}{2}t\) for all \(t\geq 0\) and set to be 
\(+\infty\) elsewhere in \(\R\).
Clearly, we are in the case {\rm (L)}, with \(c^{\infty}(1)=\frac{1}{2}\) and 
\(c^*=\bigchi_{[-\infty,1/2]}\).
Let \(\mu_{\rm opt}\) and \(\bar u\) be solutions of \eqref{eq:MOPc} and \eqref{eq:aux_pb_2}, respectively.
Note that the point 2) in \eqref{eq:OCLintro} gives us that 
\[
\frac{|\nabla_{\mu_{\rm opt}}\bar u|^2}{2}=\frac{|\nabla \bar u|^2}{2}=\frac{1}{2},\quad a_{\rm opt}\mathcal 
L^n\text{-a.e.\ in } \Omega,
\]
while by the point 3) we have that \(\frac{|\nabla_{\mu_{\rm opt}}\bar u|^2}{2}=\frac{1}{2}\) holds 
\(\mu_{\rm opt}^s\)-a.e.\ in \(\Omega\).
All in all, we get that \(|\nabla_{\mu_{\rm opt}} \bar u|=1\) holds \(\mu_{\rm opt}\)-a.e.\ in \(\Omega\). 
This, together with the conditions 1), 4) and 5) gives 
precisely the optimality conditions given in \cite[{Equation} (4.1)]{BB2001} with \(\Sigma=\partial \Omega\).
On the other hand, it is clear that the 
couples \((\mu_{\rm opt}, \bar u)\) satisfying  \cite[{Equation} (4.1)]{BB2001} in \(\Omega\) and with 
\(\Sigma=\partial\Omega\)
satisfy also the optimality conditions given in \eqref{eq:OCLintro}.
\end{example}
\begin{remark}
All the stated results can be obtained, by means of the same techniques, also in the case of energies \(\mathcal F(\mu,u)\) involving 
\(|\nabla u|^p\) for any \(1<p<+\infty\). It would be further interesting to investigate, in light of recent results in \cite{GL}, the case \(p=1\).
\end{remark}

\bigskip

\noindent{\bf Acknowledgments.} The work of GB and DL is part of the PRIN 2017 {\it``Gradient flows, Optimal 
Transport and Metric Measure Structures''}, and the work of MSG and DL is part of the PRIN 2017 
{\it``Variational Methods for Stationary and Evolution Problems with Singularities and Interfaces''}, both 
funded by the Italian Ministry of Research and University. The authors are member of the Gruppo Nazionale 
per l'Analisi Matematica, la Probabilit\`a e le loro Applicazioni (GNAMPA) of the Istituto Nazionale di Alta 
Matematica (INdAM).

\bigskip

\bibliographystyle{siam}

\bigskip
{\small\noindent
Giuseppe Buttazzo:
Dipartimento di Matematica,
Universit\`a di Pisa\\
Largo B. Pontecorvo 5,
56127 Pisa - ITALY\\
{\tt giuseppe.buttazzo@dm.unipi.it}\\
{\tt http://www.dm.unipi.it/pages/buttazzo/}

\bigskip\noindent
Maria Stella Gelli:
Dipartimento di Matematica,
Universit\`a di Pisa\\
Largo B. Pontecorvo 5,
56127 Pisa - ITALY\\
{\tt maria.stella.gelli@unipi.it}

\bigskip\noindent
Danka Lu\v ci\' c:
Dipartimento di Matematica,
Universit\`a di Pisa\\
Largo B. Pontecorvo 5,
56127 Pisa - ITALY\\
{\tt danka.lucic@dm.unipi.it}
}

\end{document}